\documentclass{amsart}

\usepackage{amsmath, amssymb,amsthm,amscd}
\usepackage[pdftex]{graphicx}
\usepackage{float}
\usepackage{pb-diagram}
\usepackage{amscd}
\usepackage{array}
\usepackage{hyperref}
\usepackage{comment}
\usepackage{mathtools}

\theoremstyle{definition}
\newtheorem{defi}{Definition}[section]
\newtheorem{thm}[defi]{Theorem}
\newtheorem{thm*}{Theorem}
\newtheorem{prop}[defi]{Proposition}
\newtheorem{lem}[defi]{Lemma}
\newtheorem{cor}[defi]{Corollary}
\newtheorem{conj*}{Conjecture}
\newtheorem{rem}[defi]{Remark}
\newtheorem{ex}[defi]{Example}

\numberwithin{equation}{section}

\begin{document}
\title{Blown-up boundaries associated with ample cones of K3 surfaces}
\author{Taiki Takatsu}
\thanks{Department of Mathematics, Tokyo Institute of Technology, Tokyo 152-8551, Japan.
\\
e-mail: takatsu.t.aa@m.titech.ac.jp
}
\keywords{K3 surfaces, Automorphism groups of K3 surfaces, Cohomological dimension, 
Blown-up boundaries of ample cones, Sphere packings}
\subjclass[2020]{Primary 14J28; Secondary 14J50, 14J27}
\begin{abstract}
In this paper, we apply the Bestvina-Mess type formula for relatively hyperbolic groups,
which is established by Tomohiro Fukaya, to automorphism groups of K3 surfaces, and we show that the virtual cohomological dimension of automorphism groups of K3 surfaces determined by the covering dimension of the blown-up boundaries associated with their ample cones.
\end{abstract}
\maketitle
\section{Introduction}

In this paper, 
we discuss the classification problem of automorphism groups of K3 surfaces over $\mathbb{C}$.

There are two approaches to automorphisms of K3 surfaces.
The first one is the classification under some finiteness conditions, such as 
the classification of finite subgroups, and that of possible finite order of automorphisms.
For example, a well-known and important result in this approach is Mukai's result \cite{Mukai1} Theorem (0.3), which says that symplectic finite subgroups of the automorphism groups of K3 surfaces are isomorphic
to subgroups of the Mathieu group $M_{23}$, one of the sporadic simple groups.

The other approach is the application of Borcherds' lattice theory (often called Borcherds' method) to compute a generating set of infinite automorphism groups. The problem of finding
a generating set of the automorphism groups of Kummer surfaces was first proposed by Klein.
This problem took more than 100 years 
to be solved by Kondo \cite{Kondo1} 
using  Borcherds' method.
Although this method is very useful and, in fact, the automorphism groups of some specific K3 surfaces have been calculated,
this method is not applicable to the general case.

Shigeru Mukai recently proposed at the occasion of the Algebra Symposium in Tokyo in 2018 the following conjecture on discrete group theory and automorphism groups, from which one may hopefully find a general framework for dealing 
with infinite automorphism groups of elliptic K3 surfaces.

\begin{conj*}
Let $X$ be an elliptic K\textup{3}, Enriques or Coble surface,
and let $\operatorname{Aut}(X)$ be the automorphism group of X.
Then, the virtual cohomological dimension $\operatorname{vcd}(\operatorname{Aut}(X))$
of $\operatorname{Aut}(X)$ is equal to the maximum of the Mordell-Weil rank induced by all elliptic surface structures on $X$.
\end{conj*}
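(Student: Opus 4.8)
The plan is to realize $\operatorname{Aut}(X)$ as a group acting on a hyperbolic space and to read off its cohomological dimension from the combinatorics of its cusps. Recall that the N\'eron--Severi lattice $\operatorname{NS}(X)$ has signature $(1,\rho-1)$, so its positive cone carries a hyperbolic metric, and after passing to the finite-index subgroup acting trivially on the discriminant group one obtains a properly discontinuous action of $\operatorname{Aut}(X)$ on the convex region $\mathcal{A}\subset\mathbb{H}^{\rho-1}$ cut out from the ample cone by the hyperplanes dual to the $(-2)$-curves. First I would verify that this action exhibits $\operatorname{Aut}(X)$ as a \emph{relatively hyperbolic} group: the thick part of $\mathcal{A}/\operatorname{Aut}(X)$ is compact (by Sterk-type finiteness of the chamber structure), while the ends correspond to the finitely many $\operatorname{Aut}(X)$-orbits of isotropic rays in $\partial\mathcal{A}$, i.e.\ to the finitely many elliptic fibration structures on $X$. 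The peripheral subgroup attached to such a ray is precisely the subgroup of $\operatorname{Aut}(X)$ preserving the corresponding elliptic pencil, and by the theory of elliptic surfaces this subgroup is virtually the Mordell--Weil group of that fibration, hence virtually free abelian of rank equal to its Mordell--Weil rank $r$.

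With this peripheral structure in hand, I would invoke Fukaya's Bestvina--Mess type formula for relatively hyperbolic groups, whose role is to convert the geometric problem into a dimension-theoretic one: for a relatively hyperbolic group $G$ acting suitably one has
\[
\operatorname{vcd}(G) = \dim\!\bigl(\partial^{\mathrm{bl}}G\bigr) + 1,
\]
where $\dim$ denotes covering dimension and $\partial^{\mathrm{bl}}G$ is the \emph{blown-up boundary}: the Bowditch boundary of $G$ in which each parabolic point is replaced by the visual boundary of the corresponding peripheral subgroup. For a peripheral subgroup that is virtually $\mathbb{Z}^{r}$ the inserted piece is a sphere $S^{r-1}$, of covering dimension $r-1$, consistently with $\operatorname{vcd}(\mathbb{Z}^{r})=r$.

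The heart of the argument is then the computation of $\dim(\partial^{\mathrm{bl}}\operatorname{Aut}(X))$. The blown-up boundary decomposes into the conical limit set, sitting inside the round sphere $\partial\mathbb{H}^{\rho-1}$, together with the inserted spheres $S^{r-1}$ at the countably many parabolic points, one for each representative elliptic fibration of Mordell--Weil rank $r$. I would argue that the covering dimension is governed by the cusps: each inserted $S^{r-1}$ forces $\dim\ge\max_{\mathrm{fib}}(r-1)$, while the reverse inequality requires showing that neither the conical limit set nor the accumulation of the spheres raises the dimension above $\max_{\mathrm{fib}}(r-1)$. Granting this,
\[
\dim\!\bigl(\partial^{\mathrm{bl}}\operatorname{Aut}(X)\bigr)=\max_{\text{fibrations}}(r-1),
\]
and the formula above yields $\operatorname{vcd}(\operatorname{Aut}(X))=\max_{\text{fibrations}}r$, as conjectured. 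I expect the decisive obstacle to be exactly this upper bound on the covering dimension: one must simultaneously control the fractal geometry of the limit set determined by the $(-2)$-wall configuration and the way infinitely many cusp spheres accumulate, and confirm that the supremum over fibrations is in fact attained. Establishing relative hyperbolicity with the correct peripheral system --- in particular that the cusp stabilizers are virtually the Mordell--Weil groups and that there are only finitely many cusp orbits when $\operatorname{Aut}(X)$ is infinite --- is the second delicate point, and is where the hypotheses on $X$ (elliptic K3, Enriques, or Coble) should enter.
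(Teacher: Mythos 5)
There is a genuine gap, and it is worth being precise about where: the statement you were given is Mukai's conjecture, which the paper itself does \emph{not} prove --- it is stated as an open conjecture, and the paper's actual results are (i) the reduction $\operatorname{vcd}(\operatorname{Aut}(X)) = \dim \partial A_{X}^{bl} + 1$ for a K3 surface with non-elementary $\operatorname{Aut}_{s}(X)$, obtained via geometric finiteness of $\operatorname{Aut}_{s}(X)$ and a Bestvina--Mess/Fukaya argument, and (ii) verifications of the conjecture in special situations: limit sets homeomorphic to a Cantor set, and connected sphere packings in which every elliptic divisor is a tangency point of two spheres associated with $(-2)$-roots, where one gets $\operatorname{vcd}(\operatorname{Aut}(X)) = \max_{f}\operatorname{rk}\operatorname{MW}(f) = \rho(X)-3$. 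Your proposal correctly reproduces reduction (i) in outline: your relative hyperbolicity with peripheral subgroups virtually equal to Mordell--Weil groups corresponds to the paper's geometric finiteness theorem together with the Shioda--Tate computation of cusp-stabilizer ranks and the matching of maximal parabolic subgroups with elliptic divisors. But at the decisive step you write ``Granting this'' for the identity $\dim(\partial^{\mathrm{bl}}\operatorname{Aut}(X)) = \max_{f}(r-1)$, and that identity is exactly the open content of the conjecture, not a technical verification. The inserted spheres give the lower bound $\dim \geq \max_{f}(r-1)$, as you say, but no general principle bounds the covering dimension of the blown-up boundary from above by the cusp ranks: the limit set carved out by the $(-2)$-wall configuration can a priori have large topological dimension, and blowing up a countable dense set of parabolic points interacts with the global dimension in a subtle way (for instance, for a non-uniform lattice the full sphere $S^{n-1}$ blown up at all cusps has covering dimension $n-2$, which is why the formula is consistent with $\operatorname{vcd} = n-1$ there). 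The paper only achieves this upper bound under the strong sphere-packing-with-tangency hypothesis, via a Mayer--Vietoris induction over the balls of the packing showing $H^{*}(U_{m})=0$ for $* \geq n-2$; that hypothesis is also what guarantees the maximum over fibrations is attained.

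Two further points your outline does not address. First, the conjecture concerns elliptic K3, Enriques, \emph{and} Coble surfaces, while the entire hyperbolic-lattice setup you describe (and the paper's) is developed only for K3 surfaces; the Enriques and Coble cases need at least a separate treatment of the relevant lattices and Torelli-type statements. Second, the reduction (i) carries the hypothesis that $\operatorname{Aut}_{s}(X)$ be non-elementary, so even granting your dimension identity the elementary case would remain. In short: your proposal is a faithful reconstruction of the paper's research program, but it reduces the conjecture to the same unproven dimension computation that keeps it a conjecture, so it is not a proof.
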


\begin{rem}
The virtual cohomological dimension of $\operatorname{Aut}(X)$
is easily seen to be larger than the rank of the Mordell-Weil group of any elliptic fibration.
Hence the main point of Mukai's conjecture lies in finding the upper-bound of
$\operatorname{vcd}(\operatorname{Aut}(X))$ as stated therein.
\end{rem}

This conjecture could be a turning point from the conventional study
of automorphism groups of elliptic K3, Enriques and Coble surfaces
by computing their generators to that by computing their  cohomological invariants.
Our main result is a formula which determines the virtual cohomological dimension of the automorphism group of a K3 surface by the covering dimension of the blown-up boundary
associated with the ample cone of the K3 surface.

\begin{thm*}
If $\operatorname{Aut}_{s}(X)$
is non-elementary,
then $$\operatorname{vcd}(\operatorname{Aut}(X)) = \dim \partial A_{X}^{bl} + 1,$$
where $\partial A_{X}^{bl}$ is the blown-up boundary associated with the ample cone of $X$
(see Section 5.2 for the detailed discussion).
\end{thm*}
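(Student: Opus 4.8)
The plan is to deduce the identity from Fukaya's Bestvina--Mess type formula for relatively hyperbolic groups. Concretely, I would first exhibit $\operatorname{Aut}(X)$ as (virtually) a relatively hyperbolic group whose geometry is governed by the ample cone, then identify the blown-up boundary $\partial A_X^{bl}$ with the boundary to which Fukaya's formula applies, and finally read off the dimension equality.

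First I would set up the hyperbolic geometry supplied by the N\'eron--Severi lattice. Because $\operatorname{NS}(X)$ has signature $(1,\rho-1)$, its positive cone projectivizes to a model of real hyperbolic space $\mathbb{H}^{\rho-1}$, inside which the ample cone $A_X$ descends to a convex, generally infinite-sided polyhedron $P$ cut out by the walls orthogonal to the $(-2)$-classes. The group $\operatorname{Aut}_s(X)$ acts on $P$ properly discontinuously, with $\overline{P}$ serving as a fundamental domain for the group generated by $\operatorname{Aut}_s(X)$ together with the reflection group of the $(-2)$-classes. The hypothesis that $\operatorname{Aut}_s(X)$ is non-elementary ensures that its limit set has more than two points, so the action is genuinely of general type and the relatively hyperbolic machinery is available rather than degenerating to the elementary (finite or virtually cyclic) cases.

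Next I would pin down the relatively hyperbolic structure. The cusps of $P$ correspond to the rational isotropic rays on $\partial\mathbb{H}^{\rho-1}$, i.e. to the isotropic classes defining elliptic fibrations of $X$; the associated peripheral subgroups are the stabilizers of these cusps, which encode exactly the Mordell--Weil data appearing in Mukai's conjecture. I would verify that the action of $\operatorname{Aut}(X)$ on $P$ is geometrically finite and invoke Bowditch's criterion to conclude that $(\operatorname{Aut}(X),\mathcal{P})$ is relatively hyperbolic, where $\mathcal{P}$ is the (conjugacy-finite) family of cusp stabilizers.

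Finally I would compare boundaries and apply the formula. By its construction in Section 5.2, the blown-up boundary $\partial A_X^{bl}$ is the Bowditch boundary of $(\operatorname{Aut}(X),\mathcal{P})$ with each parabolic point replaced by the boundary data of the corresponding peripheral subgroup; I would show that this agrees, as a $\mathcal{Z}$-set with the requisite equivariant cell structure, with the space on which Fukaya's formula expresses $\operatorname{vcd}$ as covering dimension plus one, and check that the peripheral subgroups satisfy Fukaya's hypotheses so that their cohomological contribution is absorbed into the blow-up. The formula then gives $\operatorname{vcd}(\operatorname{Aut}(X)) = \dim\partial A_X^{bl} + 1$. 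The hard part is this last identification: controlling the covering dimension of the boundary under the blow-up and confirming that the peripheral contribution is exactly what the blow-up accounts for, so that no extra maximum over the $\operatorname{vcd}$ of the peripheral subgroups survives and the formula retains the clean form $\dim{+}1$.
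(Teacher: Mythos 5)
Your strategy is viable and is, in fact, exactly the route the paper flags in its introduction: once geometric finiteness of $\operatorname{Aut}_{s}(X)$ is known, the formula follows ``immediately'' from Fukaya's Corollary B, since geometrically finite groups are relatively hyperbolic with respect to their maximal parabolic subgroups. But it is not the proof the paper actually writes out. The paper's Section 5.2 argument is deliberately self-contained and never invokes Fukaya's theorem or Bowditch's criterion: it constructs an explicit $\mathcal{Z}$-structure $(\overline{A_{X}}^{bl}\setminus\bigcup_{c\in C}U(c),\ \partial A_{X}^{bl})$ for $\operatorname{Aut}(X)$ and then applies Bestvina's $\mathcal{Z}$-structure form of the Bestvina--Mess formula (Theorem \ref{Bestvina-Mess formula}). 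The substance there is hyperbolic-geometric: choosing equivariant, pairwise disjoint cusped regions $U(c)$ that meet only the sides of $\overline{A_{X}}$ incident with $c$ (Theorem \ref{Ratcliffe_thm_12.6.5} together with Proposition \ref{horopoints of Ax}); defining a retraction $\rho_{a}$ sending $x$ to the unique point where the smallest (horo)sphere centered at $x$ touches the truncated set $K(a,t)$, and proving uniqueness, continuity, and extension to the blow-up (Lemma \ref{well-definess of homotopy}, Propositions \ref{continuity of homotopy} and \ref{extends a homotopy}); and computing the blown-up limit set over each cusp as $\Lambda^{bl}(\Gamma_{p})\cong S^{r-1}$ (Lemma \ref{limit set at a parabolic point}). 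That last computation is precisely what resolves the worry you raise at the end: a rank-$r$ peripheral subgroup has $\operatorname{vcd}=r=\dim S^{r-1}+1$, so its contribution is literally realized inside $\partial A_{X}^{bl}$ and no separate maximum over peripheral $\operatorname{vcd}$'s survives. Your route buys brevity and generality; the paper's buys independence from the unpublished preprint \cite{Fukaya} and an argument readable with only classical hyperbolic geometry.

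Two caveats on your plan. First, geometric finiteness of $\operatorname{Aut}_{s}(X)$, which you propose to ``verify,'' is itself the paper's second main theorem (Theorem \ref{geometrically finiteness}, obtained independently by Kikuta); it requires the horoball analysis of Lemmas \ref{horoballs} and \ref{generalized polytope2} and should not be treated as a routine check. Second, $\partial A_{X}^{bl}$ is not literally the Bowditch boundary with parabolic points blown up: by Theorem \ref{Baragar's result for limit sets}, $\partial A_{X}\setminus\Lambda_{X}=V\setminus C_{X}$ consists of ideal vertices with finite stabilizer (rank-zero cusps, i.e.\ elliptic fibrations of Mordell--Weil rank $0$), and these survive as isolated points of $\partial A_{X}^{bl}$ that are invisible to any boundary built from the relatively hyperbolic pair. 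Since $\operatorname{Aut}_{s}(X)$ is non-elementary, these isolated points do not affect the covering dimension, but your identification of $\partial A_{X}^{bl}$ with Fukaya's blown-up corona --- which you correctly single out as the hard step --- must account for this discrepancy, and the blow-up itself is performed only at $C_{X}$, not at every cusp.
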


To show this result, we focus on the natural action of automorphism groups on hyperbolic spaces, which come from the action on the cohomology of degree 2. 
Then we can consider the symplectic automorphism group of a K3 surface as a discrete subgroup
of M\"{o}bius transformations of a hyperbolic space. Moreover, we can show the following theorem:

\begin{thm*}\label{Kikuta-Takatsu}
Let $X$ be a K3 surface,
and let $\operatorname{Aut}_{s}(X)$ be the symplectic automorphism group of $X$.
Then $\operatorname{Aut}_{s}(X)$ is geometrically finite.
\end{thm*}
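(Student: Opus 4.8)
The plan is to realize $\operatorname{Aut}_s(X)$ as a discrete group of isometries of a real hyperbolic space and then to exhibit a finite-sided fundamental polyhedron for the action, from which geometric finiteness follows by the standard characterization. First I would fix the hyperbolic model: the N\'eron--Severi lattice $\operatorname{NS}(X)$ has signature $(1,\rho-1)$ with $\rho = \operatorname{rank}\operatorname{NS}(X)$, so the projectivization of one component of the positive cone $\{x \in \operatorname{NS}(X)_{\mathbb{R}} : x^2 > 0\}$ is a model of hyperbolic space $\mathbb{H}^{\rho-1}$, and $O^{+}(\operatorname{NS}(X))$ is an arithmetic lattice in $\operatorname{Isom}(\mathbb{H}^{\rho-1})$, in particular discrete. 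By the global Torelli theorem an automorphism is determined by its action on $H^{2}(X,\mathbb{Z})$, and a symplectic automorphism acts trivially on the transcendental lattice; hence the natural map $\operatorname{Aut}_s(X) \to O^{+}(\operatorname{NS}(X))$ has finite kernel, and its image is a discrete subgroup of $\operatorname{Isom}(\mathbb{H}^{\rho-1})$ acting properly discontinuously. Being biregular, symplectic automorphisms preserve ampleness, so this action preserves the ample cone $A_{X}$ and its closure, the nef cone.

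Next I would reduce the problem to producing a finite-sided fundamental polyhedron: a discrete subgroup of $\operatorname{Isom}(\mathbb{H}^{n})$ that admits a fundamental polyhedron with finitely many faces is geometrically finite (Bowditch, Ratcliffe), so it suffices to construct one. The decisive input here is the Morrison--Kawamata cone conjecture for K3 surfaces, proved by Sterk: the action of $\operatorname{Aut}(X)$ on the rational convex hull $\operatorname{Nef}^{+}(X)$ of the nef cone admits a rational polyhedral fundamental domain $\Pi \subset \operatorname{NS}(X)_{\mathbb{R}}$. Passing from $\operatorname{Aut}(X)$ to $\operatorname{Aut}_s(X)$ is harmless, since the image of $\operatorname{Aut}(X)$ in $\operatorname{GL}(H^{2,0}(X)) \cong \mathbb{C}^{\times}$ is a finite cyclic group; thus $\operatorname{Aut}_s(X)$ is a finite-index normal subgroup of $\operatorname{Aut}(X)$, finite-index subgroups of geometrically finite groups are geometrically finite, and equivalently one enlarges $\Pi$ to a fundamental domain for $\operatorname{Aut}_s(X)$ by taking the union of the finitely many $\operatorname{Aut}(X)/\operatorname{Aut}_s(X)$-translates, which is again rational polyhedral.

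Finally I would translate this algebro-geometric statement into hyperbolic geometry. Projectivizing the rational polyhedral cone $\Pi$ yields a convex polyhedron $P \subset \mathbb{H}^{\rho-1}$ with finitely many faces, whose finitely many ideal vertices lie on the rational isotropic rays in the boundary of the positive cone. The $\operatorname{Aut}_s(X)$-translates of $P$ tile the convex hull of the limit set, and each ideal vertex should be a bounded parabolic fixed point: its stabilizer acts cocompactly on the horosphere section cut out by the finitely many faces meeting that vertex, reflecting the fact that an isotropic ray corresponds to an elliptic fibration whose Mordell--Weil translations act as parabolic isometries. Hence $P$ is a finite-sided fundamental polyhedron and $\operatorname{Aut}_s(X)$ is geometrically finite. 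I expect the main obstacle to be precisely this last step: verifying that the projectivized fundamental domain is genuinely finite-sided in $\mathbb{H}^{\rho-1}$ and that its ideal vertices are bounded parabolic points rather than more degenerate limit points, since $\operatorname{Aut}_s(X)$ is in general not a lattice --- the nef cone may be strictly smaller than the positive cone, so the limit set is a proper subset of $\partial\mathbb{H}^{\rho-1}$ --- and one must therefore control the cusp neighborhoods arising from the isotropic boundary rays without the luxury of finite covolume.
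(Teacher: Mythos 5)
Your strategy is sound, but it is a genuinely different route from the paper's. The paper never invokes Sterk's rational polyhedral fundamental domain (the cone conjecture for K3 surfaces; Sterk is cited only for finite generation of $\operatorname{Aut}(X)$). Instead it exploits that the full arithmetic group $O^{+}(NS(X))$ has finite covolume: it takes a finite-sided Dirichlet polytope $P$ for $O^{+}(NS(X))$ centered at an ample class, observes that $A_{X}$ is itself the Dirichlet domain of the Weyl group $W(X)$ (Proposition \ref{ample cone vs Dirichlet domain}), deduces that the $\operatorname{Aut}(A(X))$-translates of $\overline{P}$ tile $\overline{A_{X}}$ (Lemma \ref{generalized polytope2}), and then verifies the convex-core definition of geometric finiteness (Definition \ref{hyperbolic_geometry7}) directly, using the equivariant disjoint horoball family of Theorem \ref{Ratcliffe_thm_12.6.5} together with Lemma \ref{horoballs} to show that $B(X)/\operatorname{Aut}(A(X))$ minus finitely many horocusps is compact. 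Your route --- quoting Sterk as a black box and then checking a geometric finiteness criterion for the resulting polyhedron --- is essentially the independent proof of Kikuta mentioned in the paper's introduction; it is shorter and generalizes (via Markman's cone theorem) to irreducible symplectic varieties, whereas the paper's hands-on argument stays inside Ratcliffe's hyperbolic geometry and produces the explicit cusp bookkeeping ($V$, $C_{X}$, Lemma \ref{the rank of MW}) that the paper reuses later for the blown-up boundary.

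Two steps in your write-up need repair. First, the criterion ``finite-sided fundamental polyhedron $\Rightarrow$ geometrically finite'' concerns polyhedra fundamental for the action on all of $\mathbb{H}^{n}$; your $P$ is fundamental only for the action on the invariant convex subset given by the projectivized $\operatorname{Nef}^{+}(X)$, and $\operatorname{Aut}_{s}(X)$ is not a lattice. So you cannot apply that characterization as stated; you must instead run the convex-core argument: since $\Lambda_{X}\subset \partial A_{X}$, the hull of $\Lambda_{X}$ lies in $\overline{A_{X}}$ and hence is covered by the translates of $\overline{P}$, and one shows the truncated core is compact --- which is exactly the paper's computation in the proof of Theorem \ref{geometrically finiteness}. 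Second, and more substantively, your assertion that each ideal vertex of $P$ is a bounded parabolic fixed point is false: ideal vertices corresponding to elliptic fibrations of Mordell--Weil rank $0$ (equivalently $\operatorname{vcd}W(X)_{c}=n-1$) have finite stabilizer in $\operatorname{Aut}_{s}(X)$, are isolated in $\partial A_{X}$, and do not belong to the limit set at all (Lemma \ref{horoballs}; see also Theorem \ref{Baragar's result for limit sets}). The fix is precisely what Lemma \ref{horoballs} supplies: the hull of $\Lambda_{X}$ misses a horoball at every such vertex (in the Klein model this is immediate, as the closed convex hull of the compact set $\Lambda_{X}$ meets the sphere at infinity exactly in $\Lambda_{X}$, sphere points being extreme points of the ball), so only the ideal vertices lying in $\Lambda_{X}$ need to be shown bounded parabolic. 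Since you flagged exactly this verification as the main obstacle, the gap is one you anticipated, but as written the proposal asserts the wrong statement and the argument would fail at rank-zero cusps without the horoball-avoidance step.
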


In \cite{Bestvina-Mess}, Bestvina and Mess showed that 
the cohomological dimension of a torsion free hyperbolic group is determined by
the covering dimension of its Gromov boundary.
Recently, Tomohiro Fukaya \cite{Fukaya} generalized the formula for relatively hyperbolic groups satisfying a certain condition. 
Geometrically finite groups are typical examples of relative hyperbolic groups. 
We can immediately obtain Theorem 1 by applying Fukaya's result \cite[Corollary B]{Fukaya}.

On the other hand, the paper of Fukaya is still unpublished, and his theorem and his proof are stated using geometric group theory.
For readability to algebraic geometers,
this paper also contains the result and the proof of Fukaya, 
which are simplified and modified to fit in with our situation, 
in Section 5.2,
using the geometry of ample cones of K3 surfaces and hyperbolic geometry.

\begin{rem}
After the author finished the first draft of this paper, 
Tomohiro Fukaya informed the author that Theorem 2 
has been independently obtained and proved by Kohei Kikuta.
He showed  geometrical finiteness for the automorphism groups of irreducible symplectic varieties over a field of characteristic zero, and his proof is different in some points from our proof given in this paper. 
For more details, 
the reader is advised to refer to his forthcoming paper.
\end{rem}

An apllication of Theorem 1 is the case that the ample cones of K3 surfaces are
sphere packings.
\begin{thm*}\label{Sphere packings}
Let $X$ be an elliptic K3 surface.
If $\partial A_{X}$ is a connected sphere packing,
and
every elliptic divisor of $X$ corresponds to 
the tangent point of some two spheres associated with $(-2)$-curves,
then
$$
\operatorname{vcd}(\operatorname{Aut}(X))
=\max \{ \operatorname{rk}\operatorname{MW}(f)\}
=\rho(X)-3
,$$
where $f$ runs all elliptic fibrations of $X$.
\end{thm*}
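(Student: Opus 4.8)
The plan is to prove the two asserted equalities in turn, using the dictionary between the geometry of the ample cone and the arithmetic of elliptic fibrations. Throughout I identify an elliptic divisor with a primitive nef isotropic class $[F]\in\operatorname{NS}(X)$, equivalently with a rational parabolic point of $\partial A_{X}$ on the sphere at infinity of the hyperbolic space $\mathbb{H}^{\rho(X)-1}$ attached to $\operatorname{NS}(X)_{\mathbb{R}}$ of signature $(1,\rho(X)-1)$. The fibrewise translations by sections realise $\operatorname{MW}(f)$ as the parabolic subgroup fixing $[F]$, so the peripheral subgroup of $\operatorname{Aut}(X)$ at this cusp is virtually $\mathbb{Z}^{\operatorname{rk}\operatorname{MW}(f)}$; this is the input needed to feed Theorem 1 and the geometric finiteness of Theorem 2.

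First I compute the Mordell--Weil ranks. By the Shioda--Tate formula, every elliptic fibration $f$ with a section satisfies
\[
\operatorname{rk}\operatorname{MW}(f)=\rho(X)-2-\sum_{v}(m_{v}-1),
\]
where $m_{v}$ is the number of irreducible components of the fibre over $v$. Saying that $[F]$ is the tangent point of two spheres attached to $(-2)$-curves means that there are two vertical $(-2)$-curves $C_{1},C_{2}$ with $C_{i}\cdot F=0$ whose walls are asymptotic at $[F]$, and these are the two components of a reducible fibre; a simple double tangency corresponds to an $A_{1}$ configuration (a fibre of type $I_{2}$ or $III$), contributing exactly $m_{v}-1=1$. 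Hence $\sum_{v}(m_{v}-1)\ge 1$ for every $f$, so $\operatorname{rk}\operatorname{MW}(f)\le\rho(X)-3$, and the hypothesis that the tangency is simple forces this to be the only vertical contribution, so the bound is attained. Therefore $\max_{f}\operatorname{rk}\operatorname{MW}(f)=\rho(X)-3$.

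It remains to show $\operatorname{vcd}(\operatorname{Aut}(X))=\rho(X)-3$. The inequality $\operatorname{vcd}(\operatorname{Aut}(X))\ge\rho(X)-3$ is the elementary one from the introduction: $\operatorname{Aut}(X)$ contains the parabolic subgroup $\mathbb{Z}^{\rho(X)-3}\subset\operatorname{MW}(f)$ of a fibration attaining the maximal rank, and $\operatorname{cd}(\mathbb{Z}^{\rho(X)-3})=\rho(X)-3$. For the reverse inequality I apply Theorem 1, which reduces the claim to the dimension estimate $\dim\partial A_{X}^{bl}=\rho(X)-4$. Here the sphere-packing hypothesis enters directly. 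By the construction of Section 5.2 the blown-up boundary replaces each parabolic point coming from a fibration $f$ by a sphere of dimension $\operatorname{rk}\operatorname{MW}(f)-1$; since every cusp now has rank $\rho(X)-3$, each blown-up cusp is a sphere $S^{\rho(X)-4}$, giving $\dim\partial A_{X}^{bl}\ge\rho(X)-4$. For the matching upper bound I must control the remaining, conical part of the limit set: I would use that $\partial A_{X}$ is the residual set of a \emph{connected} packing whose spheres are indexed by $(-2)$-curves and whose only singularities are the simple tangencies at the parabolic points, so that off the blown-up cusps the boundary is covered, compatibly with the $\operatorname{Aut}(X)$-action and the geometric finiteness of Theorem 2, by pieces of covering dimension at most $\rho(X)-4$. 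This yields $\dim\partial A_{X}^{bl}=\rho(X)-4$, hence $\operatorname{vcd}(\operatorname{Aut}(X))=\dim\partial A_{X}^{bl}+1=\rho(X)-3$, and combining with the first part gives the three equalities.

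The main obstacle is precisely this last upper bound $\dim\partial A_{X}^{bl}\le\rho(X)-4$, which is the hard (Mukai) direction. The delicate point is that the ambient residual set, being a countable union of the $(\rho(X)-3)$-dimensional spheres attached to the $(-2)$-curves, has covering dimension $\rho(X)-3$; what must be shown is that passing to the boundary seen by $\operatorname{Aut}(X)$ itself, and then blowing up the cusps, lowers this by one. Establishing this dimension drop uniformly over the infinitely many walls---using only the connectedness of the packing, the simple-tangency hypothesis, and geometric finiteness---is the heart of the argument, and it is exactly the step where the combinatorics of the sphere packing is converted into Fukaya's cohomological formula.
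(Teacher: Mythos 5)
Your first equality and the lower bound are essentially the paper's: Shioda--Tate plus the hypothesis that every elliptic divisor is a simple tangency forces the vertical contribution $\sum_{v}(m_{v}-1)=1$, hence $\operatorname{rk}\operatorname{MW}(f)=\rho(X)-3$ (this is Lemma \ref{the rank of MW} in the paper), and the parabolic $\mathbb{Z}^{\rho(X)-3}$ gives $\operatorname{vcd}(\operatorname{Aut}(X))\geq\rho(X)-3$. The genuine gap is exactly where you flag it: the upper bound. You reduce it to the covering-dimension estimate $\dim\partial A_{X}^{bl}\leq\rho(X)-4$ and then do not prove it; worse, the mechanism you sketch --- covering the boundary ``off the blown-up cusps'' by pieces of dimension at most $\rho(X)-4$ --- cannot work as stated. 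Away from the exceptional fibers, the strict transform of each wall sphere $S_{\delta}$ is locally just $S_{\delta}$ minus the tangent points, a $(\rho(X)-3)$-dimensional manifold, and no finite stage of the blow-up lowers this local dimension. Any drop in dimension is a global phenomenon of the inverse limit over \emph{all} blow-ups (for instance, in the Apollonian circle packing each circle only becomes totally disconnected after being cut at its dense set of tangent points), and establishing covering dimension of such an inverse limit directly is precisely what you cannot do locally.

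The paper avoids covering dimension altogether and replaces it by a \v{C}ech-cohomology computation, which is the missing idea in your proposal. Using the connectedness of the packing (which you invoke but never use), the balls are ordered $B_{1},B_{2},\dots$ so that each meets the union of its predecessors; one sets $U_{m}=\bigcap_{i\leq m}B_{i}$, $F_{m}=\bigcap_{i\leq m}\overline{B_{i}}$, and shows $\partial A_{X}^{bl}=\varprojlim F_{m}^{bl}$, where $F_{m}^{bl}$ is the strict transform at the finitely many tangent points among the first $m$ spheres. Since $F_{m}^{bl}$ is a manifold with interior $U_{m}$, continuity of \v{C}ech cohomology gives $H^{*}(\partial A_{X}^{bl})\cong\varinjlim H^{*}(U_{m})$, and a Mayer--Vietoris induction with $U_{m}=U_{m+1}\cup V_{m+1}$, $V_{m+1}$ contractible and $U_{m+1}\cap V_{m+1}$ homotopy equivalent to a $k$-punctured $(n-2)$-sphere, i.e.\ to $\vee^{k-1}S^{n-3}$, yields $H^{*}(U_{m})=0$ for $*\geq n-2$. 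Combined with the cohomological form of the Bestvina--Mess/Fukaya formula coming from the $\mathcal{Z}$-structure of Theorem \ref{Main result},
\begin{equation*}
\operatorname{vcd}\operatorname{Aut}(X)=\max\{m:H_{c}^{m}(A_{X},\mathbb{Z})\neq 0\}
=\max\{m:H^{m}(\partial A_{X}^{bl},\mathbb{Z})\neq 0\}+1\leq n-2=\rho(X)-3,
\end{equation*}
this closes the argument: blowing up each wall sphere at even one point already kills its top cohomology class, and the exhaustion by finite sub-packings makes this uniform over the infinitely many walls --- precisely the uniformity your covering-dimension approach leaves unresolved. So your proposal is incomplete at the crucial step, and the step you propose in its place would fail; the correct route is cohomological, not dimension-theoretic.
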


Baragar \cite{Baragar1} constructed K3 surfaces whose ample cones are sphere packings.
Then we can compute the virtual cohomological dimension of their automorphism groups.
Note that these K3 surfaces provide examples that affirm Conjecture 1, and,
to the best of our knowledge, they are the first non-trivial examples in the sense that the verification of the conjecture does not rely on explicit calculation of the group structure of the automorphism groups.

\vspace{3mm}
{\textbf{Acknowledgments}}

First, the author expresses his gratitude to his supervisor Fumiharu Kato,
for providing the author the interesting topics and for many helpful discussions.
He is grateful to Shigeru Mukai for various useful advices on
the virtual cohomological dimension of automorphism groups of K3 surfaces.
He thanks Takefumi Nosaka,
Ryokichi Tanaka, and Kohei Kikuta
for valuable comments and careful reading this paper.
Finally, he thanks Tomohiro Fukaya for many useful comments 
on hyperbolic geometry and hyperbolic group theory.

\section{Lattices and K3 surfaces}
In this section, we recall some definitions and facts about lattices and K3 surfaces.
\subsection{Lattices}

Let $L$ be a free abelian group of rank $r$, and let $\langle \hspace{2mm}, \hspace{2mm} \rangle : L \times L\rightarrow \mathbb{Z}$ be a symmetric bilinear form. 
A symmetric bilinear form $\langle \hspace{2mm}, \hspace{2mm} \rangle $ is said to be {\it{non-degenerate}} if $\langle x, y \rangle = 0$ for any $y \in L$ implies $x=0$.
We call a pair $(L, \langle \hspace{2mm}, \hspace{2mm} \rangle)$ of $L$ and a non-degenerate symmetric bilinear form a {\it{lattice}} of rank $r$.
For an even lattice $L$, a quotient group $L^{*}/L$ is denoted by $A_{L}$. 

Let $L$ be a unimodular even lattice,
and $S$ be a primitive sublattice of $L$.
The orthogonal complement of $S$, denoted by $T$,
is also a primitive sublattice of $L$.
Let $H$ be a quotient group $L/(S \oplus T)$.
We denote by $p_{S}$ (resp. $p_{T}$) the projection map from $A_{S}\oplus A_{T}$ to $A_{S}$ 
(resp. $A_{T}$).

\subsection{K3 surfaces}

\begin{thm}[{\cite[Theorem 4.5]{Kondo2}}]
For a K3 surface $X$, the second cohomology group $H^{2}(X,\mathbb{Z})$ with the cup product is a unimodular even lattice with the signature $(3,19)$. Then $H^{2}(X,\mathbb{Z})$ is isomorphic to $U^{\oplus 3}\oplus E_{8}^{\oplus 2}$,
where $U$ is the hyperbolic lattice of rank 2.
\end{thm}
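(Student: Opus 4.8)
The plan is to reconstruct the three asserted properties of the intersection form—that it is even, unimodular, and of signature $(3,19)$—purely from the topology of a K3 surface, and then to pin down the isomorphism type by invoking the classification of indefinite even unimodular lattices. Recall that a K3 surface is by definition a smooth compact complex surface $X$ with trivial canonical bundle $K_X \cong \mathcal{O}_X$ and $H^1(X,\mathcal{O}_X)=0$. First I would extract the Hodge and Betti numbers from this definition: triviality of $K_X$ gives $p_g = h^{2,0}=h^0(X,K_X)=1$, and the vanishing of the irregularity $q=h^{1,0}=0$ gives $b_1 = 2q = 0$ and, by Poincar\'e duality, $b_3=0$. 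Noether's formula $\chi(\mathcal{O}_X)=\tfrac{1}{12}(K_X^2+c_2(X))$ together with $\chi(\mathcal{O}_X)=1-q+p_g=2$ and $K_X^2=0$ yields the topological Euler number $c_2(X)=e(X)=24$; comparing with $e(X)=\sum_i (-1)^i b_i = 2 + b_2$ gives $b_2 = 22$.

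Next I would determine the signature. By the Hirzebruch signature theorem, the signature of the intersection form on $H^2(X,\mathbb{R})$ equals $\tau = \tfrac{1}{3}\bigl(c_1(X)^2 - 2c_2(X)\bigr) = \tfrac{1}{3}(0-48)=-16$, so with $b_2 = b^{+}+b^{-}=22$ and $b^{+}-b^{-}=-16$ we obtain $(b^{+},b^{-})=(3,19)$, in agreement with $b^{+}=2p_g+1=3$. Evenness follows from Wu's formula, which identifies the mod-$2$ self-intersection with the second Stiefel--Whitney class: for every $x\in H^2(X,\mathbb{Z})$ one has $x^2 \equiv \langle x, w_2(X)\rangle \pmod 2$, and since $w_2(X)$ is the reduction of $c_1(X)=-c_1(K_X)=0$, the form is even. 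Unimodularity is Poincar\'e duality: a K3 surface is simply connected, hence $H^2(X,\mathbb{Z})$ is a finitely generated free abelian group and the cup-product pairing is perfect, i.e.\ the lattice is unimodular.

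Finally I would appeal to the arithmetic classification of even unimodular lattices: an indefinite even unimodular lattice is determined up to isometry by its signature (Milnor--Serre). Since $U$ has signature $(1,1)$ and the negative-definite $E_8$ lattice has signature $(0,8)$, the lattice $U^{\oplus 3}\oplus E_8^{\oplus 2}$ is even, unimodular, and of signature $(3,19)$, so it must be isometric to $H^2(X,\mathbb{Z})$.

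The main obstacle is not any single step but rather assembling the correct topological inputs in the right normalization: the geometric content (Hodge numbers, Euler number, signature, evenness, unimodularity) is elementary once one has the definition of a K3 surface and the standard characteristic-class identities, whereas the decisive identification of the lattice rests entirely on the classification of indefinite even unimodular forms, a nontrivial arithmetic theorem that I would cite rather than reprove. A secondary point requiring care is the sign convention for $E_8$ (taken negative definite here), so that the summands' signatures add up to $(3,19)$ rather than $(19,3)$.
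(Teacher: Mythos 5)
Your argument is correct, and the paper itself offers no proof of this statement at all---it is quoted as standard background from \cite[Theorem 4.5]{Kondo2}---so your proposal is essentially a reconstruction of the standard proof found in the cited literature: Noether's formula and the Hirzebruch signature theorem for the numerical invariants $(b_2,\tau)=(22,-16)$, Wu's formula for evenness, Poincar\'e duality for unimodularity, and the Milnor--Serre classification of indefinite even unimodular lattices for the isomorphism type, all of which you assemble with the correct sign conventions. The one refinement worth noting is that invoking simple connectedness of K3 surfaces (a deeper fact, usually obtained via Kodaira's theorem that all K3 surfaces are deformation equivalent to a quartic) is more than is needed for torsion-freeness of $H^{2}(X,\mathbb{Z})$: the standard elementary route observes that a torsion class in $H_{1}(X,\mathbb{Z})$ would produce an unramified cyclic cover $Y\rightarrow X$ of degree $n\geq 2$ with trivial canonical bundle, forcing $\chi(\mathcal{O}_{Y})=n\,\chi(\mathcal{O}_{X})=2n$ while $\chi(\mathcal{O}_{Y})=2-q(Y)\leq 2$, a contradiction; combined with $b_{1}=0$ this gives $H_{1}(X,\mathbb{Z})=0$, and the universal coefficient theorem then yields torsion-free $H^{2}(X,\mathbb{Z})$ without appeal to $\pi_{1}(X)=0$.
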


\begin{defi}
Let $\omega_{X}$ be a nowhere vanishing holomorphic $2$-form of $X$.
The {\it{N\'{e}ron-Severi lattice}} $NS(X)$ is defined by
$$NS(X) \coloneqq \{ x\in H^{2}(X,\mathbb{Z}) \hspace{+,3em}: \hspace{+,3em}\langle x,\omega_{X} \rangle =0\}=H^{2}(X,\mathbb{Z})\cap H^{1,1}(X,\mathbb{R}).$$
The N\'{e}ron-Severi lattice $NS(X)$ is a lattice by the cup product of signature $(1, \rho(X)-1)$,
where $\rho(X)$ is the Picard number of $X$.
The {\it{transcendental lattice}} $T_{X}$ of $X$ is defined by $NS(X)^{\bot}$.
\end{defi}

\begin{defi}
Let $X$ be a K3 surface.
We define
$$\Delta (X) \coloneqq \lbrace \delta \in S_{X} : \delta {\textup{ is a }} (-2){\textup{-root }}\rbrace$$
and 
$$\Delta^{+}(X) \coloneqq \lbrace \delta \in \Delta (X) : \delta {\textup{ is effective}} \rbrace .$$

For $\delta \in \Delta^{+}(X)$, the {\it{reflection}} $s_{\delta}$ in $H^{1,1}(X, \mathbb{R})$ is defined by
$$s_{\delta}(x)=x+ \langle x,\delta \rangle \delta \hspace{2mm}for \hspace{2mm} x\in L.$$
The {\it{reflection group}} $W(X)$ is the subgroup of $O(H^{1,1}(X, \mathbb{R}))$ generated by 
the set of reflections $\{ s_{\delta} : \delta \in \Delta (X) \rbrace.$
The cone $P(X)= \lbrace x \in H^{1,1}(X, \mathbb{R}) : \langle x, x \rangle > 0 \rbrace$ has two connected components and the one containing the K\"{a}hler class is denoted by $P^{+}(X)$ and called the {\it{positive cone}}.
\end{defi}

\begin{defi}
The group $W(X)$ acts on the space $P^{+}(X)$ in a canonical way.
The {\it{K\"{a}hler cone}} of $X$ is the one of the fundamental domains is given by
$$ D(X) = \lbrace x \in P^{+}(X) :  \langle x, \delta \rangle > 0 
{\text{ for any }}\delta \in \Delta(X)^{+}
\rbrace$$
containing K\"{a}hler class of $X$.
\end{defi}

\begin{defi}
Let $X$ be an algebraic K3 surface,
The {\it{ample cone}} $A(X)$ is defined to be 
$$A(X) = \lbrace x \in NS(X) \otimes \mathbb{R} \cap P^{+}(X) : 
\langle x, \delta  \rangle > 0 
{\text{ for any }}\delta \in \Delta(X)^{+}\rbrace.$$
\end{defi}

Let $f :X \to \mathbb{P}^{1}$ be an elliptic fibration of a K3 surface $X$,
and let $j:J\rightarrow \mathbb{P}^{1}$ be the jacobian fibration of $f$. 
The sections of $j$ generate a group $MW(f)$
that is callded the Mordell-Weil group of the elliptic fibration 
$f :X \to \mathbb{P}^{1}$.
The Mordell-Weil group $MW(f)$ is a finitely generated abelian group. 
We denote by $r(f)$ the rank of $MW(f)$.

The following formula is called Shioda-Tate formula.
\begin{lem}{\cite[Formula (4.3.2)]{Cossec-Dolgachev-Liedtke-Kondo}}\label{9}
Let $f:X\to \mathbb{P}^{1}$ be an elliptic fibration of a K3 surface, and let $F_{v}=f^{-1}(t_{v}) \; (1\leq v \leq k)$ be the singular fibres of $f$.
We denote by $m_{v}$  the number of irreducible components of $F_{v}$.
Then the $r(f)$ is given by the following formula:
 	$$r(f)= \rho(X)-2-\sum_{v=1}^k (m_{v}-1).$$
\end{lem}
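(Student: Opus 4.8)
The plan is to deduce the Shioda--Tate formula from the structure of the N\'eron--Severi lattice of an elliptic surface equipped with a section, and then to reduce the stated situation to that case. Since $MW(f)$ is by definition the group of sections of the jacobian fibration $j : J \to \mathbb{P}^{1}$, and $J$ carries a distinguished zero section $O$, I would first replace $f$ by $j$. Here one uses that $J$ is again a K3 surface whose singular fibres have the same number $m_{v}$ of irreducible components as those of $f$ (the two fibrations sharing the same Kodaira fibre types), and that $X$ and $J$ share the same Picard number $\rho(X) = \rho(J)$, their transcendental lattices being isometric. Thus it suffices to prove the formula for $j : J \to \mathbb{P}^{1}$ with its section, after which $MW(f) = MW(j)$ and the matching of $m_{v}$, $\rho$ transports it back.

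Next I would introduce the \emph{trivial lattice} $T \subseteq NS(J)$, the subgroup generated by the class $F$ of a general fibre, the class of the zero section $O$, and the classes of those irreducible components of the singular fibres $F_{v}$ that are disjoint from $O$. For each $v$ exactly one component, the identity component $\Theta_{v,0}$, meets $O$, so the remaining components contribute $m_{v}-1$ further generators per singular fibre. Using $O^{2}=-2$, $O\cdot F = 1$, $F^{2}=0$, and the fact that the non-identity components are orthogonal to both $O$ and $F$ and span the negative definite root lattices attached to the Kodaira fibre types, these generators are linearly independent, whence $\operatorname{rk} T = 2 + \sum_{v=1}^{k}(m_{v}-1)$.

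The crux is the isomorphism $NS(J)/T \cong MW(j)$, which I would obtain from the restriction map to the generic fibre $J_{\eta}$, an elliptic curve over $K = \mathbb{C}(\mathbb{P}^{1})$. Because $J$ is a K3 surface we have $\operatorname{Pic}(J) = NS(J)$, so restriction gives $\psi : NS(J) \to \operatorname{Pic}(J_{\eta})$. The section $O$ splits $\operatorname{Pic}(J_{\eta}) \cong \mathbb{Z} \oplus \operatorname{Pic}^{0}(J_{\eta})$ with $\operatorname{Pic}^{0}(J_{\eta}) \cong J_{\eta}(K) = MW(j)$, and composing with the projection yields a homomorphism $NS(J) \to MW(j)$. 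Surjectivity is immediate, since any section restricts to a $K$-rational point realizing an arbitrary element of $MW(j)$. For the kernel, a class $D$ maps to $0$ exactly when $D|_{J_{\eta}} \sim n\,O_{\eta}$ for some $n$, i.e. when $D - nO$ restricts trivially to $J_{\eta}$; such a divisor is then linearly equivalent to a vertical divisor supported on the fibres. The decisive point is that the identity component $\Theta_{v,0}$ has multiplicity one in $F_{v}$, so the relation $F = \sum_{i} a_{v,i}\Theta_{v,i}$ expresses $\Theta_{v,0}$ in terms of $F$ and the remaining components; hence every vertical class already lies in $T$, and the kernel is precisely $T$.

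Taking ranks in the exact sequence $0 \to T \to NS(J) \to MW(j) \to 0$ then gives $r(j) = \rho(J) - \operatorname{rk} T = \rho(J) - 2 - \sum_{v}(m_{v}-1)$, and transporting along $\rho(X)=\rho(J)$ and $MW(f)=MW(j)$ yields the claim. I expect the main obstacle to be the kernel computation: namely, justifying that a class trivial on the generic fibre is linearly equivalent to a genuinely vertical divisor, and invoking the multiplicity-one property of the identity component to absorb all vertical classes into $T$. The reduction to the jacobian, though routine for experts, also relies on the non-obvious equality $\rho(X) = \rho(J)$, which I would cite rather than reprove.
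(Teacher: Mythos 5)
Your proof is correct, and in fact the paper offers no argument of its own for this lemma---it is quoted directly from Cossec--Dolgachev--Liedtke--Kondo, Formula (4.3.2), whose proof is exactly the classical Shioda--Tate argument you give: rank count on the trivial lattice $T$ generated by $F$, $O$, and the non-identity fibre components, the isomorphism $NS(J)/T \cong MW(j)$ via restriction to the generic fibre, and the multiplicity-one property of the identity component to absorb vertical classes. Your added care in reducing from $f$ to its jacobian $j$ (using $\rho(X)=\rho(J)$ and the matching fibre types, the latter unproblematic since elliptic K3 surfaces have no multiple fibres) is exactly what the paper's definition of $MW(f)$ as the sections of $j$ requires, so nothing is missing.
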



\section{Hyperbolic geometry}\label{chapter 4}
In this section, we recall some basic facts of $n$-dimensional hyperbolic spaces and discrete groups of isometries of $n$-dimensional hyperbolic spaces, which are contained in \cite{Ratcliffe}.

\vspace{5mm}
\noindent
{\textbf{Notation}}

Let $(X, d)$ be a metric space, and let $r$ be a positive real number. 
Throughout this section, we denote by 

$B(a, r)$  the open ball of radius $r$ centered at a point $a$ of $X$,

$C(a, r)$  the closed ball of radius $r$ centered at a point $a$ of $X$,

$S(a, r)$  the sphere of radius $r$ centerd at a point $a$ of $X$,

$N(S, r)$  the $r$-neighborhood of $S$ in $X$,

$I(X)$  the group of isometries of $X$.

We denote by 
$S^{n}$ the $n$-dimensional unit sphere.
A vector $e_{i}$ in $\mathbb{R}^{n}$ is the vector whose $i$-th entry is 1 
and the other entries are $0$.

In the hyperbolic geometry, it is useful to use some models of hyperbolic spaces for different situations.
Therefore, we begin with recalling 
the {\it{hyperboloid model}} of hyperbolic spaces.
In section \ref{the conformal ball model} and \ref{the upper half-space model}
we recall 
the {\it{conformal ball model}},
and the {\it{upper half-plane model}}
of hyperbolic spaces.

\subsection{The hyperboloid model}

\begin{defi}
Let $n$ be a non-negative integer.
Let $x=(x_{1}, \ldots, x_{n}, x_{n+1})$ and 
$y=(y_{1}, \ldots, y_{n}, y_{n+1})$ be vectors in the $(n+1)$-dimensional real vector space $\mathbb{R}^{n+1}$.
The {\it{Lorentzian inner product}} of $x$ and $y$ is defined by
$$ x\circ y = x_{1}y_{1}+ \cdots +x_{n}y_{n} -x_{n+1}y_{n+1}.$$
The {\it{Lorentzian (n+1)-space}} is the vector space $\mathbb{R}^{n+1}$ together with the Lorentzian inner product.
A real $(n+1)\times (n+1)$ matrix $A$ is said to be {\it{Lorentzian}} if
for any $x, y\in \mathbb{R}^{n+1}$, $x \circ y = Ax \circ Ay.$
 
\end{defi}

\begin{defi}
The {\it{hyperboloid model}} $H^{n}$ of hyperbolic $n$-spaces is defined to be the set 
$\{ x \in \mathbb{R}^{n+1} : x\circ x = -1,\hspace{1mm} x_{1}>0 \}.$
The {\it{hyperbolic distance}} $d_{H}(x, y)$ between $x$ and $y$ is defined by the equation
$$ \cosh d_{H}(x, y) = -x\circ y.$$
\end{defi}
By \cite[Theorem 3.2.2]{Ratcliffe}, the hyperbolic  distance $d_{H}$ is a metric on $H^{n}$.
By the definition of $d_{H}$,
Lorentz $(n+1)\times (n+1)$ matrices naturally act isometrically on $H^{n}$.
We denote the group of isometries by $I(H^{n}).$

\begin{defi}
Let $H^{n}$ be the hyperboloid model of $n$-dimensional hyperbolic spaces.
The set of all Lorentz $(n+1)\times (n+1)$ matrices with matrix multiplication
is called the {\it{Lorentz group}} of $(n+1) \times (n+1)$ matrices,
and is denoted by $O(n, 1)$.
The {\it{positive Lorentz group}} $O^{+}(n, 1)$ is defined by
$$ O^{+}(n, 1) = \{ A \in O(n, 1) : AH^{n} = H^{n} \}.$$
\end{defi}

By \cite[Theorem 3.2.3]{Ratcliffe}, there is an isomorphism 
from the positive Lorentzian group $O^{+}(n,1)$ to the group of hyperbolic isometries $I(H^{n})$.

\begin{defi}
A {\it{hyperbolic $m$-plane}} of $H^{n}$ is 
the nonempty intersection of $H^{n}$ with an $(m+1)$-dimensional
vector subspace of $\mathbb{R}^{n+1}$.
\end{defi}

\subsection{M\"{o}bius transformations}
To define the conformal ball model and the upper half-space model of hyperbolic $n$-spaces,
we use the hyperboloid model and {\it{M\"{o}bius transformation}}.
In this section, we recall the definition and some basic facts of M\"{o}bius transformation.

\begin{defi}
Let $a$ be a unit vector in $E^{n}$, and let $t$ be a positive real number.
Consider the hyperplane of $E^{n}$ is defined by 
$$ P(a, t) = \{x \in E^{n} : a\cdot x = t\}.$$
The {\it{reflection}} $\rho$ of $E^{n}$ in the plane $P(a, t)$ is defined by 
$$ \rho(x) = x + 2(t-a\cdot x)a.$$

Let $a$ be a point in $E^{n}$, and let $r$ be a positive real number.
The {\it{sphere}} of $E^{n}$ of radius $r$ centered at $a$ is defined by 
$$ S(a, r) = \{x \in E^{n}: |x-a| = r\}.$$
The {\it{reflection}} (or {\it{inversion}}) $\sigma$ of $E^{n}$ in the sphere $S(a, r)$ is defined
by $$\sigma (x) = a + \bigg(\frac{r}{|x-a|}\bigg)^{2} (x-a).$$
\end{defi}

Let $\infty$ be a point not in $E^{n}$.
We define $\hat{E}^{n}=E^{n}\cup \{\infty\}$
whose topology is given by the one-point compactification of $E^n$.
\begin{defi}
Let $a$ be a unit vector in $E^{n}$, and let $t$ be a positive real number.
The {\it{extended hyperplane}} of $\hat{E}^{n}$ is defined by 
$\hat{P}(a, t) = P(a, t)\cup \{\infty\}.$
Let $\rho$ be the reflection of the hyperplane $P(a, t)$.
Extend $\rho$ to a bijection 
$\hat{\rho}: \hat{E}^{n} \rightarrow \hat{E}^{n}$ by setting
$\hat{\rho}(\infty) = \infty.$
In a similar way, the inversion $\sigma$ of an Euclidean sphere $S(a, r)$ in $E^{n}$ is extended to
$\hat{\sigma}:\hat{E}^{n} \rightarrow \hat{E}^{n}$ by setting $\hat{\sigma}(\infty) = \infty.$
The extended map $\hat{\rho}$ (resp. $\hat{\sigma}$) is called the reflection of 
$\hat{P}(a, t)$ (resp. $\hat{S}(a, r)$)
, which is a homeomorphism of $\hat{E}^{n}$.
\end{defi}

\begin{defi}
A {\it{sphere}} of $\hat{E}^{n}$ is either an extended hyperplane or an Euclidean sphere.
A {\it{M\"{o}bius transformation}} of $\hat{E}^{n}$ is a finite composition of reflection of a sphere in $\hat{E}^{n}$.
\end{defi}

\subsection{The conformal ball model}\label{the conformal ball model}

In this section, we recall the conformal ball model of hyperbolic space.
Identify $\mathbb{R}^{n}$ with $\mathbb{R}^{n} \times \{0\}$ in $\mathbb{R}^{n+1}$.
Let $B^{n}$ be the open unit ball of $\mathbb{R}^{n},$
and let $|x|$ be the {\it{Euclidean norm}} of a vector $x$ in $\mathbb{R}^{n}$:
i.e., $|x|= (x_{1}^{2}+\cdots +x_{n}^{2})^{\frac{1}{2}}.$

The {\it{ stereographic projection $\zeta$}} from $B^{n}$ to the hyperbolic space $H^{n}$ is defined by 
the formula
$$\zeta (x) = 
\Bigl( \hspace{0.5mm}
\frac{2x_{1}}{1-|x|^{2}}, \ldots, \frac{2x_{n}}{1-|x|^{2}},\frac{1+|x|^{2}}{1-|x|^{2}}
\hspace{0.5mm}\Bigl).$$
This map $\zeta$ is bijective.

\begin{defi}
A metric $d_{B}$ on $B^{n}$ is defined by the formula $d_{B}(x, y)= d_{H}(\zeta (x), \zeta (y)).$
The metric space $(B^{n}, d_{B})$ is called the conformal ball model of hyperbolic $n$-spaces.
\end{defi}

\begin{defi}
A {\it{M\"{o}bius transformation of $B^{n}$}} is a M\"{o}bius transformation $\phi$ of $\hat{E}^{n}$
that leaves $B^{n}$ invariant: i.e. $\phi$ satisfies $\phi (B^{n}) = B^{n}.$
The group of M\"{o}bius transformation of $B^{n}$ is denoted by $M(B^{n})$.
\end{defi}
It is well-known that every M\"{o}bius transformation of $B^n$ restricts to an isometry
of the conformal ball model $B^n$ and the restriction induces an isomorphism from
$M(B^n)$ to $I(B^n).$

The boundary of $B^{n}$ in $\mathbb{R}^{n}$, denoted by $\partial B^{n}$, is 
the unit sphere $\{ x \in \mathbb{R}^{n} : |x|=1 \}$,
where $|x|$ is the Euclidean norm of $x$. A {\it{hyperbolic m-plane}} of $B^{n}$ is the image of a hyperbolic $m$-plane of $H^{n}$ by $\zeta^{-1}$.
By \cite[Theorem 4.5.3]{Ratcliffe}, a hyperbolic $m$-plane of $B^{n}$ is the intersection of $B^{n}$ with
either an $m$-dimensional vector subspace of $\mathbb{R}^{n}$ or an $m$-dimensional sphere of $\mathbb{R}^{n}$ orthogonal to $\partial B^{n}.$

\begin{defi}
Let $c$ be a point in $\partial B^{n}$.
Let $S$ be an $(n-1)$-dimensional inscribed sphere of $B^{n}$ tangent at $c$,
and let $T$ be the one of connected component of $\mathbb{R}^{n}\backslash S$ 
which is bounded.
We call the intersection of $B^{n}$ with $S$ (resp. $T$) 
the {\it{horosphere}} of $B^{n}$ (resp. {\it{horoball}} of $B^{n}$).
\end{defi}

\subsection{The upper half-space model}\label{the upper half-space model}
\begin{defi}
Let $x=(x_{1}, \ldots, x_{n})$ be a vector in $\mathbb{R}^{n}$,
and let $e_{n}$ be the vector in $\mathbb{R}^{n}$ whose entries are all zero except for the $n$-th,
which is one.
Let $\rho$ (resp. $\sigma$) be the reflection of $\hat{E}^{n}$ in 
$P(e_{n}, 0)$ (resp. $S(e_{n}, \sqrt{2})$).
The maps $\rho$ and $\sigma$ is given by the formula 
\begin{align*}
	\rho (x) &= (x_{1}, \ldots, -x_{n}),\\
	\sigma (x) &= e_{n} + \frac{2(x-e_{n})}{|x-e_{n}|^{2}} 
{\text{   for $x \in \mathbb{R}^{n}\backslash \{e_{n}\}$}}.
\end{align*}

The $n$-dimensional {\it{upper half-space}} $U^{n}$ is defined to be
$$ U^{n}=\{ (x_{1}, \ldots, x_{n}) \in \mathbb{R}^{n} : x_{n}>0 \}.$$
Then the map $\eta = \sigma \rho$ is a map 
from the upper half-space $U^{n}$ to the open unit ball $B^{n}$,
and is called the {\it{standard transformation}} from $U^{n}$ to $B^{n}.$
The {\it{Poincar\'{e} metric}} on $U^{n}$
is defined by
$$ d_{U}(x, y) = d_{B}(\eta (x), \eta (y)).$$

The pair $(U^{n}, d_{U})$ is a metric space, and is called the {\it{upper half-model}}
of hyperbolic $n$-spaces. By the definition, this is isomorphic to the conformal ball model $B^{n}$
of hyperbolic $n$-spaces.
The group of isometries of $U^{n}$ is denoted by $I(U^{n})$.
\end{defi}

\begin{defi}
A {\it{M\"{o}bius transformation of $U^{n}$}} is a M\"{o}bius transformation $\phi$
that leaves $U^{n}$ invariant: i.e. $\phi$ satisfies $\phi (U^{n}) = U^{n}.$
The group of M\"{o}bius transformation of $U^{n}$ is denoted by $M(U^{n}).$
\end{defi}

Every M\"{o}bius transformation of the upper half-space model $U^{n}$ restricts to
an isometry of $U^{n}$ and the 
restriction induces an isomorphism from $M(U^{n})$ to $I(U^{n}).$

\begin{defi}
Identify $E^{n-1}$ with $E^{n-1}\times \{0\}$ in $E^{n}.$
Let $\phi$ be a M\"{o}bius transformation of $\hat{E}^{n-1}$,
and let $\overline{a} = (a, 0)$.
The extension $\overline{\phi}$ of $\phi$ is defined as follows:
\begin{enumerate}
	\item If $\phi$ is the reflection of $\hat{E}^{n-1}$ in $P(a, t)$,
	then $\overline{\phi}$ is the reflection of $\hat{E}^{n}$ in $P(\overline{a}, t)$.
	
	\item If $\phi$ is the reflection of $\hat{E}^{n-1}$ in $S(a, t)$,
	then $\overline{\phi}$ is the reflection of $\hat{E}^{n}$ in $S(\overline{a}, t)$.
	
	\item In the general case, $\phi$ is the composition 
	$\phi= \sigma_{1}\cdots \sigma_{m}$ of reflections.
	Then, we define the map $\overline{\phi}$ by the composition $\overline{\phi} = \overline{\sigma_{1}} \cdots \overline{\sigma_{m}}.$
\end{enumerate}
We call the extension of $\phi$ the {\it{Poincar\'{e} extension}} 
\end{defi}

Poincar\'{e} extension is well-defined. Moreover, the following holds:
\begin{thm}
A M\"{o}bius transformation $\phi$ of $\hat{E}^{n}$ leaves $U^{n}$ invariant 
if and only if $\phi$ is the Poincar\'{e} extension of 
a M\"{o}bius transformation of $\hat{E}^{n-1}.$
In particular, Poincar\'{e} extension induces the isomorphism from $M(\hat{E}^{n-1})$
to $M(U^{n})$.
\end{thm}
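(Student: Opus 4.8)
The plan is to prove the two implications separately and then read off the isomorphism. Throughout, write $r$ for the reflection of $\hat{E}^{n}$ in the hyperplane $P(e_{n},0)=\{x_{n}=0\}$, so that $\hat{E}^{n-1}$ is exactly the sphere fixed pointwise by $r$, and $U^{n}$ is one of the two components of $\hat{E}^{n}\setminus\hat{E}^{n-1}$.

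First I would treat the easy implication. It suffices to check that the Poincar\'{e} extension of each generating reflection preserves $U^{n}$, since $\overline{\phi}=\overline{\sigma_{1}}\cdots\overline{\sigma_{m}}$ is a composition of such maps. If $\rho$ is the reflection of $\hat{E}^{n-1}$ in $P(a,t)$, then $\overline{a}=(a,0)$ satisfies $\overline{a}\cdot e_{n}=0$, so the formula $\overline{\rho}(x)=x+2(t-\overline{a}\cdot x)\overline{a}$ leaves the $n$-th coordinate $x_{n}$ unchanged; similarly, if $\sigma$ is the inversion in $S(a,t)$ then the $n$-th coordinate of $\overline{\sigma}(x)=\overline{a}+(t/|x-\overline{a}|)^{2}(x-\overline{a})$ equals $(t/|x-\overline{a}|)^{2}x_{n}$, which has the same sign as $x_{n}$. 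In both cases the sign of the last coordinate is preserved, so each $\overline{\sigma_{i}}$ maps $U^{n}$ onto $U^{n}$, and hence so does $\overline{\phi}$. This proves the ``if'' direction and shows $\overline{\phi}\in M(U^{n})$; restricting these same formulas to $\{x_{n}=0\}$ also shows $\overline{\psi}|_{\hat{E}^{n-1}}=\psi$.

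For the converse, suppose $\phi\in M(\hat{E}^{n})$ satisfies $\phi(U^{n})=U^{n}$. Since $\phi$ is a homeomorphism of $\hat{E}^{n}$, it carries the topological boundary of $U^{n}$ onto itself, i.e. $\phi(\hat{E}^{n-1})=\hat{E}^{n-1}$. I would now invoke the standard fact that the restriction of a M\"{o}bius transformation to an invariant sphere is again a M\"{o}bius transformation of that sphere, so that $\psi:=\phi|_{\hat{E}^{n-1}}$ is a M\"{o}bius transformation of $\hat{E}^{n-1}$. Its Poincar\'{e} extension $\overline{\psi}$ lies in $M(U^{n})$ by the previous paragraph and agrees with $\phi$ on $\hat{E}^{n-1}$. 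Consequently $g:=\overline{\psi}^{-1}\circ\phi$ is a M\"{o}bius transformation of $\hat{E}^{n}$ that fixes $\hat{E}^{n-1}$ pointwise and maps $U^{n}$ onto $U^{n}$. A M\"{o}bius transformation fixing the sphere $\hat{E}^{n-1}$ pointwise is either the identity or the reflection $r$; since $r$ interchanges the two components of $\hat{E}^{n}\setminus\hat{E}^{n-1}$ whereas $g$ preserves $U^{n}$, we conclude $g=\mathrm{id}$ and therefore $\phi=\overline{\psi}$ is the Poincar\'{e} extension of $\psi$.

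Finally, the map $\Phi\colon M(\hat{E}^{n-1})\to M(U^{n})$, $\psi\mapsto\overline{\psi}$, is well defined by the remark preceding the theorem together with the ``if'' direction; it is a homomorphism because concatenating reflection factorizations gives $\overline{\psi_{1}\psi_{2}}=\overline{\psi_{1}}\,\overline{\psi_{2}}$, it is injective because $\overline{\psi}|_{\hat{E}^{n-1}}=\psi$ recovers $\psi$, and it is surjective by the converse direction just proved. Hence $\Phi$ is the desired isomorphism. The main obstacle is the restriction fact used in the converse---that $\phi|_{\hat{E}^{n-1}}$ is genuinely a M\"{o}bius transformation of $\hat{E}^{n-1}$ and not merely a homeomorphism---together with the rigidity statement that a M\"{o}bius transformation fixing a sphere pointwise is the identity or the reflection in it; both are established by analyzing how reflections in spheres orthogonal to $\hat{E}^{n-1}$ restrict to that sphere, and once they are in hand the rest is the elementary coordinate bookkeeping above.
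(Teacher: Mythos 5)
Your argument is sound, but note first that the paper itself offers no proof of this statement: it is recalled as standard background, and is exactly Theorem 4.4.1 of Ratcliffe's book, to which the surrounding results are attributed. So the comparison is really with Ratcliffe's textbook proof, and there your route genuinely differs. The textbook argument handles the converse by normalization: given $\phi$ preserving $U^{n}$, one composes with the Poincar\'{e} extension of a single inversion so that the resulting map fixes $\infty$; a M\"{o}bius transformation fixing $\infty$ is a Euclidean similarity $x\mapsto b+kAx$, and preservation of $U^{n}$ forces $b\in E^{n-1}$ and $Ae_{n}=e_{n}$, so the map is visibly the Poincar\'{e} extension of a similarity of $E^{n-1}$. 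That proof needs only the classification of M\"{o}bius transformations fixing $\infty$. Your proof instead black-boxes two lemmas: (i) a M\"{o}bius transformation preserving the sphere $\hat{E}^{n-1}$ restricts to a M\"{o}bius transformation of $\hat{E}^{n-1}$, and (ii) a M\"{o}bius transformation fixing $\hat{E}^{n-1}$ pointwise is the identity or the reflection in it. Both are true, and granting them your argument (including the clean computation for the easy direction and the extraction of the isomorphism) is complete and correct. The one point to be careful about is circularity: in the standard development, lemma (i) for the sphere $\hat{E}^{n-1}$ is often \emph{deduced from} the very theorem you are proving (via the normalization argument above), and your closing remark that (i) follows ``by analyzing how reflections in spheres orthogonal to $\hat{E}^{n-1}$ restrict'' does not by itself suffice, since a priori a transformation preserving $\hat{E}^{n-1}$ need not factor into such reflections --- that is essentially the content of the theorem. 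To keep your route self-contained you should prove (i) independently, e.g.\ via the characterization of M\"{o}bius transformations of $\hat{E}^{n-1}$ as the bijections preserving cross ratios, which $\phi$ manifestly satisfies on the invariant sphere; lemma (ii) follows from the similarity classification as you implicitly use it. With that dependency made explicit, your proof is a valid alternative, trading Ratcliffe's normalization trick for a restriction-plus-rigidity argument that generalizes more readily to arbitrary invariant spheres.
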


\begin{defi}
Let $\eta$ be the standard transformation from $U^{n}$ to $B^n$.
A subset $P$ is a {\it{hyperbolic $m$-plane}} (resp. {\it{horosphere}}, {\it{horoball}}) of $U^n$
if the image $\eta (P)$ of $P$ by $\eta$ is a hyperbolic $m$-plane 
(resp. horosphere, horoball) in $B^{n}$.
\end{defi}

\begin{thm}[ {\cite[Theorem 4.6.3. and 4.6.5.]{Ratcliffe}} ]
\hspace{1mm}
\begin{enumerate}
	\item A subset $P$ of $U^n$ is a hyperbolic $m$-plane of $U^n$ if and only if
	$P$ is the intersection of $U^n$ with either an $m$-plane of $E^{n}$ orthogonal to $E^{n-1}$
	or an $m$-sphere of $E^{n}$ orthogonal to $E^{n-1}$.
	
	\item A subset $\Sigma$ of $U^n$ is a horopshere of $U^n$ 
	based at a point $b$ of $\hat{E}^{n-1}$ if and only if 
	$\Sigma$ is either a Euclidean hyperplane in $U^{n}$ parallel to $E^{n-1}$ if $b=\infty,$
	or the intersection with $U^n$ of a Euclidean sphere in $\overline{U}^n$ tangent to $E^{n-1}$
	at $b$ if $b\neq \infty.$

\end{enumerate}
\end{thm}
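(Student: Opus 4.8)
The plan is to transport the two characterizations, which are already known in the conformal ball model $B^{n}$, to the upper half-space model $U^{n}$ through the standard transformation $\eta$, exploiting that $\eta$ is a Möbius transformation. Recall that $\eta = \sigma\rho$ is a composition of a reflection and an inversion, hence a Möbius transformation of $\hat{E}^{n}$; I shall freely use the two basic properties of Möbius transformations (see \cite{Ratcliffe}): that they carry generalized spheres of $\hat{E}^{n}$ (extended affine subspaces and Euclidean spheres, of any dimension) to generalized spheres of the same dimension, and that they are conformal, so that they preserve both orthogonality and tangency between generalized spheres. Writing $\mu = \eta^{-1}$, which is again a Möbius transformation, the definitions of hyperbolic $m$-plane and horosphere of $U^{n}$ read: $P$ (resp. $\Sigma$) is of the given type in $U^{n}$ precisely when $\eta(P)$ (resp. $\eta(\Sigma)$) is of the corresponding type in $B^{n}$.

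The first step is a single explicit computation of the boundary correspondence. Since $\rho$ fixes $E^{n-1} = P(e_{n},0)$ pointwise, one has $\eta(\hat{E}^{n-1}) = \sigma(\hat{E}^{n-1})$; evaluating the inversion $\sigma$ in $S(e_{n},\sqrt{2})$ on a few points (e.g. the origin, $e_{1}$, and $\infty$) shows $\sigma(\hat{E}^{n-1}) = \partial B^{n}$. Thus $\eta$ maps the boundary hyperplane $\hat{E}^{n-1}$ of $U^{n}$ onto the boundary sphere $\partial B^{n}$ of $B^{n}$, and by construction $\eta(U^{n}) = B^{n}$; equivalently $\mu(\partial B^{n}) = \hat{E}^{n-1}$ and $\mu(B^{n}) = U^{n}$. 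I also record here that $\eta(\infty) = \sigma(\infty) = e_{n}$, so that the pole $e_{n} \in \partial B^{n}$ is the image of $\infty$.

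For part (1), I would first observe that both alternatives in \cite[Theorem 4.5.3]{Ratcliffe} can be subsumed into a single statement: a hyperbolic $m$-plane of $B^{n}$ is $B^{n} \cap \tilde{\Sigma}$ for a generalized $m$-sphere $\tilde{\Sigma}$ orthogonal to $\partial B^{n}$ (a vector subspace through the center meets $\partial B^{n}$ orthogonally, and is exactly the flat case). Applying $\mu$ and using $\mu(B^{n})=U^{n}$ gives $P = U^{n} \cap \mu(\tilde{\Sigma})$, where $\mu(\tilde{\Sigma})$ is a generalized $m$-sphere orthogonal to $\mu(\partial B^{n}) = \hat{E}^{n-1}$. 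It then remains to classify the generalized $m$-spheres of $\hat{E}^{n}$ orthogonal to the extended hyperplane $\hat{E}^{n-1}$: the flat ones are precisely the $m$-planes of $E^{n}$ orthogonal to $E^{n-1}$, and the round ones are precisely the $m$-spheres of $E^{n}$ orthogonal to $E^{n-1}$ (the dichotomy corresponding to whether $\tilde{\Sigma}$ passes through the pole $e_{n}$). The converse direction is obtained by reading the same chain of equalities backwards, applying $\eta$ and invoking \cite[Theorem 4.5.3]{Ratcliffe}. For part (2) the argument is identical with ``orthogonal'' replaced by ``tangent'': a horosphere of $B^{n}$ is $B^{n} \cap S$ with $S$ a Euclidean $(n-1)$-sphere tangent to $\partial B^{n}$ at its base point $c$; applying $\mu$ yields $\Sigma = U^{n} \cap \mu(S)$ with $\mu(S)$ tangent to $\hat{E}^{n-1}$ at $b = \mu(c)$, and one splits according to whether $b = \infty$ (equivalently $c = e_{n}$), in which case $\mu(S)$ is forced to contain $\infty$ and hence to be a Euclidean hyperplane parallel to $E^{n-1}$, or $b \neq \infty$, in which case $\mu(S)$ is a Euclidean sphere tangent to $E^{n-1}$ at $b$.

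The main obstacle is not the transport itself --- once the Möbius properties are granted, that part is bookkeeping together with the one boundary computation --- but the final Euclidean classification, and in particular the behavior at the ideal point $\infty$. There one must carefully separate, for flat generalized spheres through $\infty$, the condition of being orthogonal to $\hat{E}^{n-1}$ (yielding hyperbolic $m$-planes, i.e. $m$-planes orthogonal to $E^{n-1}$) from the condition of being tangent to $\hat{E}^{n-1}$ at $\infty$ (yielding horospheres, i.e. hyperplanes parallel to $E^{n-1}$); verifying orthogonality and tangency at the ideal point, rather than at finite points where the conformality argument is immediate, is the delicate part.
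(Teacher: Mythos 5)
Your proposal is correct and follows essentially the same route as the paper's source: the paper does not prove this statement but quotes it from Ratcliffe, and the proof there is precisely your transport argument, carrying the ball-model classifications (\cite[Theorem 4.5.3]{Ratcliffe} and the inscribed-sphere description of horospheres) through the standard transformation $\eta$ via the facts that M\"{o}bius transformations preserve generalized spheres, orthogonality and tangency, together with the boundary computation $\eta(\hat{E}^{n-1})=\partial B^{n}$ and $\eta(\infty)=e_{n}$. One small simplification at the delicate point you flag: in the horosphere case $b=\infty$ you can bypass tangency at the ideal point entirely, since $\mu(S)\setminus\{\infty\}$ lies in $U^{n}$, and a Euclidean hyperplane contained in the open half-space is disjoint from $E^{n-1}$ and hence automatically parallel to it.
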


\subsection{Types of transformations and elementary groups}

Let $B^{n}$ be the conformal ball model of $n$-dimensional hyperbolic spaces.
We write $\overline{B^{n}}$ for the closed unit ball $B^{n}\cup \partial B^{n}.$

\begin{defi}
Let $\phi$ be an element of $M(B^{n})$.
Then $\phi$ maps the closed unit ball $\overline{B^{n}}$ to itself.
By the Brouwer fixed point theorem, $\phi$ has a fixed point in $\overline{B^{n}}$.

\begin{enumerate}
\item $\phi$ is {\it{elliptic}} if $\phi$ has a fixed point in $B^{n}$,

\item $\phi$ is {\it{parabolic}} if $\phi$ has no fixed point in $B^{n}$ and has
a unique fixed point in $\partial B^{n}$,

\item $\phi$ is {\it{hyperbolic}} if $\phi$ has no fixed point in $B^{n}$ and fixes two
points in $\partial B^{n}$.

\end{enumerate}
\end{defi}

Let $\phi$ be an isometry of $U^{n}$, and let $\eta$ be the standard transformation from
$U^{n}$ to $B^{n}$. 
Then $\phi$ is said to be {\it{elliptic}} 
(resp. {\it{parabolic}}, {\it{hyperbolic}})
if $\eta \phi \eta^{-1}$ is elliptic (resp. parabolic, hyperbolic).

\begin{defi}
A subgroup $G$ of $M(B^{n})$ is {\it{elementary}} if
$G$ has a finite orbit in the closed ball $B^{n}$.
Elementary groups can also be classified into three types as follows:

\begin{enumerate}
	\item The group $G$ is of {\it{elliptic type}}
	if $G$ has a finite orbit in $B^{n}$.
	
	\item The group $G$ is of {\it{parabolic  type}}
	if $G$ fixes a point of $\partial B^{n}$ and has no other finite orbit in $\overline{B}^{n}$

	\item The group $G$ is of {\it{hyperbolic type}}
	if $G$ is neither of elliptic type nor parabolic type.

\end{enumerate}
\end{defi}

\begin{thm}[{  \cite[Theorem 5.5.2.]{Ratcliffe} }]
Let $\Gamma$ be a subgroup of $M(B^n)$.
Then $\Gamma$ is an elementary discrete subgroup of elliptic type
if and only if $\Gamma$ is conjugate in $M(B^{n})$ to 
a finite subgroup of the orthogonal group $O(n)$.

\end{thm}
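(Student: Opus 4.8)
The plan is to identify the orthogonal group $O(n)$ with the stabilizer of the center $0 \in B^{n}$ inside $M(B^{n}) \cong I(B^{n})$, and then to convert the condition ``$\Gamma$ has a finite orbit in the open ball'' into ``$\Gamma$ fixes a point of the open ball'' by means of a circumcenter argument; once $\Gamma$ fixes the center, discreteness together with compactness of $O(n)$ will force finiteness. The structural fact underlying everything is that an isometry of the conformal ball model fixing the origin is exactly a Euclidean orthogonal transformation, so that $\mathrm{Stab}_{M(B^{n})}(0) = O(n)$: each $A \in O(n)$ preserves the Euclidean norm, hence maps $B^{n}$ and $\partial B^{n}$ to themselves and extends to a Möbius transformation of $\hat{E}^{n}$, giving $O(n) \le M(B^{n})$; conversely a Möbius transformation of $B^{n}$ fixing $0$, being a hyperbolic isometry fixing the center, restricts to a linear orthogonal map.

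For the implication $(\Leftarrow)$, suppose $\Gamma = g F g^{-1}$ with $F \le O(n)$ finite and $g \in M(B^{n})$. Since $F$ fixes $0$, the group $\Gamma$ fixes $g\cdot 0 \in B^{n}$, so $\{g\cdot 0\}$ is a finite $\Gamma$-orbit in the open ball and $\Gamma$ is of elliptic type; moreover $\Gamma$ is finite, hence discrete. Thus $\Gamma$ is an elementary discrete subgroup of elliptic type.

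For the main implication $(\Rightarrow)$, assume $\Gamma$ is an elementary discrete subgroup of elliptic type, so by definition it has a finite orbit $O = \{x_{1}, \dots, x_{k}\} \subset B^{n}$. I would produce a $\Gamma$-fixed point as the circumcenter of $O$. The function $f(x) = \max_{1 \le i \le k} d_{B}(x, x_{i})$ is continuous and proper on $B^{n}$, since it tends to $+\infty$ as $x \to \partial B^{n}$, so it attains its minimum; and because hyperbolic space is uniquely geodesic with strictly convex metric balls (a $\mathrm{CAT}(0)$, indeed $\mathrm{CAT}(-1)$, space) this minimizer $c \in B^{n}$ is unique. Since each $\gamma \in \Gamma$ permutes $O$, it preserves $f$ and hence fixes the unique minimizer $c$. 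Choosing $g \in M(B^{n})$ with $g(c) = 0$, which exists because $I(B^{n})$ acts transitively on $B^{n}$, the conjugate $g\Gamma g^{-1}$ fixes $0$, so $g\Gamma g^{-1} \le \mathrm{Stab}_{M(B^{n})}(0) = O(n)$. Finally $g\Gamma g^{-1}$ is a discrete subgroup of the compact group $O(n)$, and a discrete subgroup of a compact group is finite; this exhibits $\Gamma$ as conjugate to a finite subgroup of $O(n)$.

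The step I expect to be the crux is the passage from a finite orbit to a genuine fixed point, that is, the existence and above all the uniqueness of the circumcenter $c$. This is exactly where the nonpositive curvature of hyperbolic space enters, through the strict convexity of $x \mapsto d_{B}(x, x_{i})$ along geodesics, and it is what guarantees that the canonical center is genuinely $\Gamma$-fixed rather than merely the center of a $\Gamma$-invariant finite set; the remaining ingredient, finiteness of a discrete subgroup of the compact group $O(n)$, is then immediate.
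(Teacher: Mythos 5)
Your proof is correct, and since the paper states this theorem only as a citation to Ratcliffe without reproducing a proof, the right comparison is to the cited source: your argument is essentially Ratcliffe's own, namely converting the finite orbit in $B^{n}$ into a genuine fixed point via the unique circumcenter (uniqueness from strict convexity of distance in hyperbolic space), identifying $\mathrm{Stab}_{M(B^{n})}(0)$ with $O(n)$, and finishing with the observation that a discrete subgroup of a compact group is finite. No gaps; the one step you might cite rather than sketch is that a M\"{o}bius transformation of $B^{n}$ fixing the origin is orthogonal, which is \cite[Theorem 4.4.8]{Ratcliffe}.
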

\begin{thm}[{  \cite[Theorem 5.5.5]{Ratcliffe} }]
Let $\Gamma$ be a subgroup of $M(U^n).$
The group $\Gamma$ is an elementary discrete subgroup of parabolic type
if and only if $\Gamma$ conjugate in $M(U^n)$ to an infinite discrete subgroup
of $I(E^{n-1}).$
\end{thm}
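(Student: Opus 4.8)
The plan is to prove both implications after normalizing the fixed point. Throughout, recall that the parabolic-type hypothesis means $\Gamma$ fixes a single point $b\in\partial U^{n}=\hat{E}^{n-1}$ and that $\{b\}$ is the \emph{only} finite orbit of $\Gamma$ in $\overline{U}^{n}$. First I would conjugate in $M(U^{n})$ by an element carrying $b$ to $\infty$ (the group $M(U^{n})$ acts transitively on $\partial U^{n}$); since conjugation preserves discreteness, elementariness, and the type of an elementary group, there is no loss in assuming $b=\infty$. By the Poincar\'e-extension theorem above, an element of $M(U^{n})$ fixing $\infty$ is precisely the Poincar\'e extension of a M\"obius transformation of $\hat{E}^{n-1}$ fixing $\infty$, that is, of a Euclidean similarity $x\mapsto kAx+a$ with $k>0$, $A\in O(n-1)$ and $a\in E^{n-1}$. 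Hence after this reduction every $\gamma\in\Gamma$ has such a form, the assignment $\gamma\mapsto k_{\gamma}$ is a homomorphism $\mu\colon\Gamma\to\mathbb{R}_{>0}$, and $I(E^{n-1})$ consists exactly of the elements with $k_{\gamma}=1$. The whole statement then reduces to showing that in the parabolic case $\mu$ is trivial and $\Gamma$ is infinite, and conversely.

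For the forward implication I would argue as follows. If $\Gamma$ were finite it would fix a point of $U^{n}$ (the hyperbolic barycenter of any orbit), hence be of elliptic type by the elliptic-type theorem above; so parabolic type forces $\Gamma$ to be infinite. The crucial point is that $\mu\equiv 1$, i.e. $\Gamma$ contains no loxodromic element. Suppose some $\gamma\in\Gamma$ has $k_{\gamma}\ne 1$; replacing $\gamma$ by $\gamma^{-1}$ if necessary we may assume $k_{\gamma}>1$, so $\gamma$ is loxodromic with boundary fixed points $\infty$ and some $p\in E^{n-1}$. If every element of $\Gamma$ fixes $p$, then $\{p\}$ is a finite orbit distinct from $\{\infty\}$, contradicting the parabolic-type hypothesis. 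Otherwise there is $\eta\in\Gamma$ with $\eta(p)=q\ne p$, and then $\gamma$ and $\eta\gamma\eta^{-1}$ are two loxodromic transformations that share the single boundary fixed point $\infty$ but have distinct second fixed points $p\ne q$; a standard discreteness lemma (conjugating high powers of one by high powers of the other produces nontrivial elements converging to the identity) shows that $\langle\gamma,\eta\gamma\eta^{-1}\rangle$ is not discrete, contradicting discreteness of $\Gamma$. Thus $\mu\equiv 1$, and after conjugation $\Gamma$ is an infinite discrete subgroup of $I(E^{n-1})$.

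For the converse I would start from $\Gamma$ conjugate to an infinite discrete subgroup of $I(E^{n-1})$ and, after conjugation, assume $\Gamma\subseteq I(E^{n-1})$ acting on $U^{n}$ by Poincar\'e extension. Each such element fixes $\infty$, so $\{\infty\}$ is a finite orbit and $\Gamma$ is elementary. Since $\Gamma$ is an infinite discrete group of Euclidean isometries, Bieberbach's theorem gives a finite-index, hence infinite, normal subgroup of translations; in particular $\Gamma$ contains a nontrivial translation $\tau$. The Poincar\'e extension of a Euclidean isometry preserves the height coordinate $x_{n}$, so every $\Gamma$-orbit in $U^{n}$ lies in a horizontal slice and the $\langle\tau\rangle$-orbit of an interior point is infinite; therefore $\Gamma$ has no finite orbit in $U^{n}$ and is not of elliptic type. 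Finally $\tau$ fixes no boundary point other than $\infty$ and moves every point of $E^{n-1}$ to an infinite orbit, so $\{\infty\}$ is the unique finite orbit of $\Gamma$ in $\overline{U}^{n}$; hence $\Gamma$ is of parabolic type.

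I expect the forward implication to be the main obstacle, and within it the exclusion of loxodromic elements: the dichotomy that either $\Gamma$ fixes the second fixed point $p$ (producing a forbidden second finite orbit) or two loxodromics share exactly one fixed point (forcing non-discreteness). Making the latter non-discreteness estimate precise---exhibiting explicit nontrivial elements of $\langle\gamma,\eta\gamma\eta^{-1}\rangle$ accumulating at the identity---is the technical heart of the argument; by contrast the converse and the various infiniteness and elementariness bookkeeping are routine once Bieberbach's structure theorem for discrete Euclidean groups is invoked.
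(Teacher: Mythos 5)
Your forward implication is sound and follows the standard route (essentially the one behind Ratcliffe's Theorem 5.5.5, which the paper cites without reproducing a proof): normalize the fixed point to $\infty$, observe that every element becomes a similarity $x\mapsto kAx+a$, and kill the scale homomorphism by the dichotomy you describe --- either the second fixed point $p$ of a loxodromic is $\Gamma$-invariant, giving a forbidden second finite orbit, or two loxodromics share exactly one fixed point and the conjugates $g^{-m}hg^{m}$ have translation parts shrinking like $k^{-m}$ while the rotation parts stay in a compact set, so pairwise quotients give nontrivial elements converging to the identity (nontriviality because $g^{-m}hg^{m}=g^{-m'}hg^{m'}$ would force $h$ to commute with a power of $g$ and hence fix $p$). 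The genuine gap is in your converse: the claim that Bieberbach's theorem gives an infinite discrete subgroup of $I(E^{n-1})$ a finite-index normal subgroup of translations is false. The translation theorem applies to crystallographic (cocompact) groups; a general discrete subgroup of $I(E^{m})$ is only virtually free abelian, and the abelian subgroup need not consist of translations. Concretely, the cyclic group generated by a screw motion of $E^{3}$, $x\mapsto Ax+e_{3}$ with $A$ a rotation about the $e_{3}$-axis through an angle that is an irrational multiple of $\pi$, is infinite and discrete, yet contains no nontrivial translation, since every power $\gamma^{m}$ with $m\neq 0$ has rotational part $A^{m}\neq I$. Your proof that $\{\infty\}$ is the unique finite orbit rests entirely on the existence of the translation $\tau$, so the converse as written does not go through (already for $n\geq 4$).

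The repair is local and needs no structure theory at all: a point stabilizer in $I(E^{n-1})$ is a conjugate of $O(n-1)$, hence compact, so a discrete subgroup of $I(E^{n-1})$ fixing a point of $E^{n-1}$ is finite. Now suppose the infinite group $\Gamma\subseteq I(E^{n-1})$ had a finite orbit $F$ in $\overline{U}^{n}$ other than $\{\infty\}$. If $F\subset\partial U^{n}$, then $F\setminus\{\infty\}$ is a nonempty finite $\Gamma$-invariant subset of $E^{n-1}$ (as $\infty$ is fixed by all of $\Gamma$); if $F\subset U^{n}$, use that the Poincar\'{e} extension of $x\mapsto Ax+a$ acts by $(y,t)\mapsto(Ay+a,t)$ and project $F$ to a nonempty finite $\Gamma$-invariant subset of $E^{n-1}$. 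In either case the Euclidean barycenter of that finite set is fixed by $\Gamma$, forcing $\Gamma$ to be finite --- a contradiction. This single argument simultaneously shows $\Gamma$ is not of elliptic type and that $\{\infty\}$ is its only finite orbit, replacing both of the uses you made of $\tau$.
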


\subsection{Dirichlet domains}
\begin{defi}
Let $X$ be a metric space.
A subset $D$ of $X$ is a {\it{fundamental domain}} for a group $\Gamma$ of isometries of $X$ if 

\begin{enumerate}
\item the set $D$ is open and connected in $X$,

\item the elements of $\{ gD : g\in \Gamma\}$ are mutually disjoint,

\item $\displaystyle X=\bigcup_{g\in \Gamma} \hspace{1mm}g\overline{D}$,

where $\overline{D}$ is the closure of $D$ in $X$.
\end{enumerate}
A fundamental domain $D$ is {\it{locally finite}} 
if and only if the set $\{g\overline{D}: g\in \Gamma\}$ is locally finite:
i.e.,
for each $x \in X$, there is an $\epsilon > 0$ such that
the open ball $B(x, \epsilon)$ of radius $\epsilon$ centered at $x$ meets only finitely many members of 
$\{g\overline{D}: g\in \Gamma\}$.
\end{defi}

Let $\Gamma$ be a discontinuous group of isometries of a hyperbolic $n$-space $H^{n}$.
By \cite[Theorem 6.6.10 and Theorem 6.6.12]{Ratcliffe},
there is a point $a$ such that the stabilizer group $\Gamma_{a}$ of $a$ is trivial.
For each $g\neq 1$ in $\Gamma$,
we set $H_{g}^{+}(a) = \{ x\in H^{n} : d_{H}(x, a) < d_{H}(x, ga) \}.$

\begin{defi}
The {\it{Dirichlet domain}} $D_{\Gamma}(a)$ for $\Gamma$, with center $a$,
is either $H^{n}$ if $\Gamma$ is trivial or
$$ D_{\Gamma}(a)=\bigcap_{g\in \Gamma\backslash \{1\}}H_{g}^{+}(a) .$$

\end{defi}

Let $D_{\Gamma}(a)$ be the Dirichlet domain for $\Gamma$ with center $a$ as above.
By \cite[Theorem 6.6.13]{Ratcliffe}, $D_{\Gamma}(a)$ is a locally finite fundamental domain for $\Gamma$.

\subsection{Limit sets}

\begin{defi}
A point $a$ of $\partial B^{n}$ is a {\it{limit point}} of a subgroup $G$ of $M(B^{n})$
if there is a point $x\in B^{n}$ and  a sequence $\{g_{i} \}$ of elements of $ G$
such that the sequence $\{g_{i}x\}_{i\in \mathbb{N}}$ converges to $a$. 
The set of limit points of $G$ is called the {\it{limit set}} of $G$,
and is denoted by $\Lambda(G)$.
The complement of $\Lambda (G)$ in $\partial B^{n}$
is called the {\it{ordinary set}} of $G$,
and is denoted by $O(G).$
\end{defi}

\begin{defi}\label{bounded parabolic limit points}
A point $a$ of $\partial B^{n}$ is a {\it{bounded parabolic limit point}}
of a discrete group $\Gamma$ of $M(B^{n})$ if 
$a$ is a fixed point of a parabolic element of $\Gamma$ and the orbit space
$ (\Lambda(G)\backslash \{a\})/\Gamma$ is compact.
\end{defi}

\subsection{Geometrically finite discrete groups}

\begin{defi}
Let $\Gamma$ be a discrete subgroups of $M(B^{n})$
with a parabolic element that fixes a point $a$ of $\partial B^{n}$.
Let $\Gamma_{a}$ be the stabilizer group of $a$.
A horoball $B(a)$ based at $a$ is said to be a {\it{horocusped region}} for $\Gamma$ based at $a$
if and only if for each $g\in \Gamma \backslash \Gamma_{a}$, 
we have $$ B(a) \cap gB(a) = \emptyset. $$
If a horocusped region $B(a)$ based at $a$ is not maximal with respect to the inclusion relation, $B(a)$ is said to be {\it{proper}}.
\end{defi}

\begin{defi}
Let $C(\Gamma)$ be the hyperbolic convex hull of the limit set $\Lambda(\Gamma)$ of $\Gamma$.
The {\it{convex core}} of $M=B^{n}/\Gamma$ is the set
$$C(M)= (C(\Gamma)\cap B^{n})/\Gamma.$$
\end{defi}

\begin{defi}\label{hyperbolic_geometry7}
Let $\Gamma$ be a discrete subgroup of $M(B^{n})$, let $M=B^{n}/\Gamma$,
and let $C(M)$ be the convex core of $M$.
The group $\Gamma$ is said to be {\it{geometrically finite}} if 
there is a (possibly empty) finite union $V$ of proper horocusps of $M$,
with disjoint closures,
such that $C(M)\backslash V$ is compact.
\end{defi}

\begin{thm}[{  \cite[Theorem 12.4.10]{Ratcliffe}  }]\label{hyperbolic_geometry6}
Let $\Gamma$ be a discrete subgroup of $M(B^{n})$
with an integer $n>1$.
Then the following are equivalent:

\begin{enumerate}
\item The orbit space $B^{n}/\Gamma$ has finite volume.

\item The group $\Gamma$ is geometrically finite and $\Lambda(\Gamma)=\partial B^{n}$.

\item There are finite points $p_{1}, \cdots p_{k}$ of $\overline{B^{n}}$ such that 
$P\cap B^{n}$ is a fundamental domain with finite volume,
where $P$ is the hyperbolic convex full of $\{p_{1}, \cdots, p_{k}\}$ in $\overline{B^{n}}.$

\end{enumerate}
\end{thm}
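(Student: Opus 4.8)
The plan is to prove the three conditions equivalent by establishing $(1)\Leftrightarrow(2)$ and $(1)\Leftrightarrow(3)$, using the convex core $C(M)$ as the organizing object. The starting observation is that the hyperbolic convex hull of $\partial B^{n}$ is all of $\overline{B^{n}}$, so $\Lambda(\Gamma)=\partial B^{n}$ holds if and only if $C(\Gamma)\cap B^{n}=B^{n}$, that is, if and only if the convex core $C(M)$ equals $M$ itself. Thus condition (2) says precisely that $\Gamma$ is geometrically finite and the convex core exhausts $M$. I would first dispose of the two implications that land on (1), and then treat the two implications starting from (1), which carry all the real difficulty.

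For $(3)\Rightarrow(1)$ there is nothing beyond noting that a fundamental domain and the quotient have equal volume, so $\operatorname{vol}(M)=\operatorname{vol}(P\cap B^{n})<\infty$. For $(2)\Rightarrow(1)$, since $\Lambda(\Gamma)=\partial B^{n}$ the observation above gives $C(M)=M$, and geometric finiteness (Definition \ref{hyperbolic_geometry7}) yields a finite family of proper horocusps $B(a_{1})/\Gamma_{a_{1}},\dots,B(a_{m})/\Gamma_{a_{m}}$ whose union $V$ has $M\setminus V$ compact, hence of finite volume. It then remains to bound the volume of each horocusp. Passing to the upper half-space model $U^{n}$ and placing the parabolic fixed point at $\infty$, the stabilizer $\Gamma_{a_{i}}$ acts on each horosphere $\{x_{n}=t\}$ by Euclidean isometries of $E^{n-1}$; since $a_{i}$ is a bounded parabolic limit point (Definition \ref{bounded parabolic limit points}) and $\Lambda(\Gamma)=\partial B^{n}$, this action has a fundamental domain $F$ of finite Euclidean volume. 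The horocusp is the image of a slab $\{x_{n}>c\}$, whose volume with respect to $dx_{1}\cdots dx_{n}/x_{n}^{n}$ is
$$\int_{c}^{\infty}\frac{\operatorname{vol}_{\mathrm{eucl}}(F)}{x_{n}^{n}}\,dx_{n}=\frac{\operatorname{vol}_{\mathrm{eucl}}(F)}{(n-1)\,c^{n-1}}<\infty .$$
Summing the compact part and the finitely many cusps gives $\operatorname{vol}(M)<\infty$.

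The substance lies in the two implications out of (1); assume $\operatorname{vol}(M)<\infty$. To see $\Lambda(\Gamma)=\partial B^{n}$, suppose $O(\Gamma)\neq\emptyset$. Then $C(\Gamma)\cap B^{n}$ is a proper convex subset, and the region of $B^{n}$ lying beyond a supporting hyperplane of $C(\Gamma)$ near a point of $O(\Gamma)$ embeds into $M$ and has infinite hyperbolic volume, contradicting (1); hence $\Lambda(\Gamma)=\partial B^{n}$ and $C(M)=M$. For geometric finiteness I would invoke the thick--thin decomposition: by the Margulis lemma there is a dimensional $\varepsilon>0$ for which the $\varepsilon$-thin part of $M$ is a disjoint union of tubes around short closed geodesics and of cusp neighborhoods, while the $\varepsilon$-thick part has injectivity radius bounded below. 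Finite volume forces the thick part to be compact, whence only finitely many thin components meet it; the tubes are absorbed into the compact piece, and each remaining cusp end is, after conjugation, a proper horocusp $B(a_{i})/\Gamma_{a_{i}}$. Removing these finitely many horocusps leaves a compact set, which is exactly geometric finiteness with $C(M)=M$, giving (2). For $(1)\Rightarrow(3)$ I would take a Dirichlet domain $D_{\Gamma}(a)$, a locally finite convex fundamental polytope; the same thick--thin control forces it to have finitely many sides and finitely many vertices, including finitely many ideal vertices on $\partial B^{n}$ attached to the cusps, so its closure in $\overline{B^{n}}$ is the hyperbolic convex hull of this finite set $\{p_{1},\dots,p_{k}\}$, and $\operatorname{vol}(P\cap B^{n})=\operatorname{vol}(M)<\infty$.

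The main obstacle is the passage from finite volume to geometric finiteness, and equally to finite-sidedness of the Dirichlet domain. This is where the Margulis lemma and the precise structure of the thin part are indispensable: one must rule out infinitely many cusp ends, show that each cusp cross-section has finite volume (equivalently that each parabolic fixed point is a bounded, full-rank parabolic limit point), and control the accumulation of faces of the Dirichlet polytope near $\partial B^{n}$. By contrast, the funnel volume estimate used to force $\Lambda(\Gamma)=\partial B^{n}$ is comparatively soft; the finiteness of the number of ends and the cusp analysis constitute the crux of the argument.
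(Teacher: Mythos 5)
This theorem is not proved in the paper at all: it is imported verbatim from Ratcliffe \cite[Theorem 12.4.10]{Ratcliffe}, so there is no internal argument to measure you against; the relevant benchmark is Ratcliffe's own proof. Your outline has the same overall architecture as the standard (and Ratcliffe's) treatment: the trivial implication $(3)\Rightarrow(1)$, the cusp-volume integral $\int_{c}^{\infty}\operatorname{vol}_{\mathrm{eucl}}(F)\,x_{n}^{-n}\,dx_{n}$ for $(2)\Rightarrow(1)$, the ``funnel'' volume argument forcing $\Lambda(\Gamma)=\partial B^{n}$ under finite volume, and a finiteness analysis of the ends for the hard implications $(1)\Rightarrow(2)$ and $(1)\Rightarrow(3)$. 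The one genuine methodological difference is that you invoke the Margulis lemma and the thick--thin decomposition as a black box, whereas Ratcliffe derives the same finiteness statements (finitely many cusps, finite-sidedness and the generalized-polytope structure of the Dirichlet domain needed for $(3)$) from his theory of cusped limit points, horoball packings and Bieberbach groups. Your route is shorter to state but imports heavier machinery; his is longer but self-contained within the elementary theory of parabolic stabilizers, which is also what the surrounding paper actually uses (cf.\ its appeals to Theorem 12.6.5 and to the proof of Theorem 12.4.4).

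Two steps in your sketch are imprecise as written, though both are repairable. First, in the funnel argument the half-space $R$ beyond a supporting hyperplane of $C(\Gamma)$ does \emph{not} in general embed into $M$: for a Fuchsian group of the second kind the boundary geodesic of the convex core can be stabilized by an infinite cyclic group, and the endpoints of the associated interval of discontinuity lie in $\Lambda(\Gamma)$. The correct statement is that if you span a closed cap whose closure lies in $O(\Gamma)$, then the closure of $R$ in $B^{n}\cup O(\Gamma)$ is compact, proper discontinuity of the $\Gamma$-action there gives only finitely many translates $\gamma R$ meeting $R$, and hence the image of $R$ in $M$ has volume at least $\operatorname{vol}(R)/N=\infty$ --- same conclusion, finite multiplicity rather than embedding. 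Second, in $(2)\Rightarrow(1)$ you assert without proof that each cusp point $a_{i}$ is a bounded parabolic limit point in the sense of Definition \ref{bounded parabolic limit points}; under the paper's Definition \ref{hyperbolic_geometry7} it is cleaner to note that $\partial B(a_{i})/\Gamma_{a_{i}}$ is a closed subset of the compact set $C(M)\backslash V$ (here $C(M)=M$ since $\Lambda(\Gamma)=\partial B^{n}$), hence compact, so $\Gamma_{a_{i}}$ acts cocompactly on the horosphere $E^{n-1}$ and $\operatorname{vol}_{\mathrm{eucl}}(F)<\infty$. Similarly, in $(1)\Rightarrow(3)$ the passage from ``finitely many sides'' to ``the closure is the convex hull of finitely many points of $\overline{B^{n}}$'' is a separate theorem about finite-volume finite-sided polyhedra (the generalized-polytope statement echoed in Lemma \ref{generalized polytope}), not an automatic consequence; it should be cited or proved.
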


\begin{thm}[{  \cite[Theorem 12.6.5]{Ratcliffe} }]\label{Ratcliffe_thm_12.6.5}
Let $\Gamma$ be a subgroup of $M(B^n)$,
and let $C$ be the set of cusped limit points of $\Gamma$.
For each $c\in C$, there exists a cusped region $U(c)$ based at $c$ for $\Gamma$ 
such that 
\begin{enumerate}
	\item $\{\overline{U}(c) : c\in C\}$ is mutually disjoint,
	
	\item $ \gamma U(c) = U(\gamma c)$ for any $\gamma \in \Gamma$ and $c \in C$,
	
	\item $\{ \overline{U}(c)\backslash \{c\}\} $ is a locally finite family
	      of closed subsets of $B^{n}\cup O(\Gamma).$

\end{enumerate}

\end{thm}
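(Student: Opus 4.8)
The plan is to build the family $\{U(c)\}_{c\in C}$ by choosing a single cusped region for each $\Gamma$-orbit of cusped limit points and then spreading it out equivariantly; the three asserted properties then separate into a well-definedness/equivariance issue, a separation issue, and a local finiteness issue, of which the last is the real work. Throughout I would use that $\Gamma$ is discrete, hence acts properly discontinuously on $B^n\cup O(\Gamma)$, and I would pass to the upper half-space model with a given cusp moved to $\infty$ whenever an explicit estimate near a cusp is needed.

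First I would secure the equivariance (2). Partition $C$ into $\Gamma$-orbits and fix a representative $c_i$ in each. In the upper half-space model with $c_i$ at $\infty$, the stabilizer $\Gamma_{c_i}$ consists of Euclidean similarities $x\mapsto \lambda Ax+b$, and I claim discreteness forces $\lambda=1$ for each such element. Indeed, if an element $h(x)=\lambda Ax$ with $\lambda\neq 1$ coexisted with a parabolic translation $p(x)=x+b$ in $\Gamma_{c_i}$, then the conjugates $h^{n}ph^{-n}(x)=x+\lambda^{n}A^{n}b$ would be translations by vectors of norm $|\lambda|^{n}|b|\to 0$, contradicting discreteness. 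Hence every element of $\Gamma_{c_i}$ is a Euclidean isometry fixing $\infty$ and so preserves every horoball based at $c_i$. Consequently, after choosing any horocusped region $H_i$ based at $c_i$ and setting $U(\gamma c_i):=\gamma H_i$, the assignment is well defined, since $\gamma c_i=\gamma' c_i$ gives $\gamma^{-1}\gamma'\in\Gamma_{c_i}$, which preserves $H_i$. Equivariance (2) is then immediate, and each $\gamma H_i$ is again a horocusped region for $\gamma c_i$ because conjugation by $\gamma$ carries $\Gamma\setminus\Gamma_{c_i}$ onto $\Gamma\setminus\Gamma_{\gamma c_i}$. The existence of at least one $H_i$ is the standard fact that a sufficiently small horoball at a parabolic fixed point is disjoint from all of its translates outside the stabilizer, for otherwise a sequence of such translates accumulating at $c_i$ would violate proper discontinuity.

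Next I would arrange the mutual disjointness (1) by replacing each $H_i$ by a proper horocusped region $U_i\subset H_i$ whose closure stays strictly inside $H_i$ away from the bounding horosphere. For two regions in one orbit, $\overline{\gamma U_i}\cap\overline{\delta U_i}=\gamma(\overline{U_i}\cap \gamma^{-1}\delta\,\overline{U_i})$, and since $\gamma^{-1}\delta\notin\Gamma_{c_i}$ the horoballs $H_i$ and $\gamma^{-1}\delta H_i$ are disjoint, so the strictly interior closures $\overline{U_i}$ and $\gamma^{-1}\delta\,\overline{U_i}$ are disjoint as well, the only possible contact (a tangency of horospheres) being excluded by properness. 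For regions based at cusped points in different orbits I would shrink the chosen representatives in the same way until their translate families are pairwise separated; here a uniform inward push of every horosphere by one fixed hyperbolic distance is convenient, since it is automatically $\Gamma$-equivariant and diminishes all members simultaneously.

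The main obstacle is the local finiteness (3), together with the companion point that the shrinking above can be carried out consistently over all orbits at once. The key geometric input is that in the conformal ball model a horoball is literally a Euclidean ball tangent to $\partial B^n$, so a pairwise disjoint family of horoballs can accumulate only on $\partial B^n$: their Euclidean diameters must tend to $0$, for otherwise disjoint balls of diameter bounded below would have unbounded total volume inside the unit ball. Concretely, given $x\in B^n$ and a compact neighbourhood $K$ at Euclidean distance $\delta>0$ from $\partial B^n$, any $\overline{U}(c)$ meeting $K$ must have diameter comparable to $\delta$, and only finitely many members of a disjoint family can be that large; this yields local finiteness at interior points. For $x\in O(\Gamma)$ I would use that $\Lambda(\Gamma)$ is closed with $x\notin\Lambda(\Gamma)$, so a neighbourhood $V$ of $x$ in $B^n\cup O(\Gamma)$ stays away from the limit set; since each $c\in C$ lies in $\Lambda(\Gamma)$, any $\overline{U}(c)\setminus\{c\}$ reaching into $V$ is again forced to be large, and disjointness bounds their number, which also confirms that $\overline{U}(c)\setminus\{c\}$ is closed in $B^n\cup O(\Gamma)$. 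The delicate step I expect to cost the most is making these size estimates uniform enough to force disjointness of the closures over infinitely many orbits while keeping the family $\Gamma$-equivariant; the fixed-distance inward push combined with the volume bound is the mechanism I would use to control this.
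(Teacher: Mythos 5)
The paper offers no proof of this statement---it is quoted verbatim from Ratcliffe \cite[Theorem 12.6.5]{Ratcliffe}---so your attempt must be measured against the standard proof there, and it has a genuine gap at its foundation. You build the whole family out of horoballs, and the existence step you call a ``standard fact''---that a sufficiently small horoball at a parabolic fixed point is disjoint from all its translates outside the stabilizer---is false in general. Your supporting argument is also invalid: a horoball is not compact (nor relatively compact in $B^{n}$), so a sequence of translated horoballs based at points accumulating to $c_{i}$ with non-shrinking Euclidean size contradicts neither discreteness nor proper discontinuity, which only controls orbits of compact sets. The failure is not merely in the argument: for $n\geq 4$, discrete M\"obius groups containing screw parabolics with irrational rotational part can have bounded parabolic (cusped) limit points admitting \emph{no} precisely invariant horoball whatsoever (examples in the spirit of Ohtake and Apanasov, showing the Shimizu--Leutbecher lemma fails in higher dimensions; this is exactly why Bowditch and Ratcliffe work with larger regions). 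Indeed, in Ratcliffe's terminology---which this paper itself relies on downstream, e.g.\ the ``proper cusped region $U(Q,r)$'' with $\Lambda(\Gamma)\subset N(Q,r)\cup\{\infty\}$ used in Lemma \ref{limit set at a parabolic point} and Lemma \ref{the blowing-up boundary equals strict transform}---a cusped region based at $c=\infty$ is the complement of an $r$-neighborhood $N(Q,r)$ of the $\Gamma_{c}$-invariant plane $Q$ with $Q/\Gamma_{c}$ compact, not a horoball. It is bounded parabolicity of $c$ that guarantees such a region exists, can be taken precisely invariant, and has closure meeting $\Lambda(\Gamma)$ only at $c$ (which is the actual content of $\overline{U}(c)\backslash\{c\}$ being closed in $B^{n}\cup O(\Gamma)$). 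So your construction both proves a different statement (one the paper's later arguments could not use) and cannot get started in the higher-dimensional setting relevant here, where $n=\rho(X)-1$ can be as large as $19$.

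Some of your steps are sound and worth keeping: the conjugation argument that $\Gamma_{c}$ contains no element with scaling part $\lambda\neq 1$ (hence consists of Euclidean isometries and preserves every horoball at $c$) is correct, as is the resulting well-definedness of the equivariant assignment $U(\gamma c_{i}):=\gamma H_{i}$, and the packing/diameter argument for local finiteness of a disjoint family at points of $B^{n}\cup O(\Gamma)$ is the right mechanism for item (3). But beyond the existence gap, your cross-orbit separation is also incomplete: a single inward push by a fixed hyperbolic distance $d$ scales Euclidean diameters by $e^{-d}$ and separates any one pair of overlapping families, yet the theorem does not assume geometric finiteness, so there may be infinitely many $\Gamma$-orbits of cusped points, the required $d$ need not be uniform over all pairs of orbits, and local finiteness must be re-verified after the shrinking---this needs an exhaustion or diagonal argument, or, as in Ratcliffe, a direct use of bounded parabolicity, neither of which your sketch supplies.
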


\section{Cohomological dimension of groups}
\subsection{Cohomology of groups}
In this section, we review some definitions and basic facts about cohomology of groups,
which are contained in \cite{Brown}.

\begin{defi}\label{23}
Let $R$ be a ring and $M$ be a left $R$-module.
Let $n$ is a non-negative integer. 
Then $\operatorname{proj dim}_{R} M \leq n$
if there is a left projective resolution of length $n$:
$$ 0 \rightarrow P_{n} \rightarrow P_{n-1} \rightarrow \cdots \rightarrow P_{0} \rightarrow M \rightarrow 0.$$
\end{defi}
The following lemma is well-known.
\begin{lem}[{\cite[Chapter VIII, Lemma (2.1)]{Brown}}]\label{24}
The following conditions are equivalent:
\begin{enumerate}
\item $\operatorname{proj dim}_{R} M \leq n.$
 
\item $\operatorname{Ext}_{R}^{i} (M, -) = 0$ for $i>n$.

\item $\operatorname{Ext}_{R}^{n+1} (M, -) =0.$

\item For any left $R$-module exact sequence 
$0 \rightarrow K \rightarrow P_{n-1} \rightarrow \cdots \rightarrow P_{0}\rightarrow 0$ with $P_{i}$ projective,  $K$ is projective.
\end{enumerate}
\end{lem}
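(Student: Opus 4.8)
The plan is to prove the four conditions equivalent by establishing the cycle of implications $(1) \Rightarrow (2) \Rightarrow (3) \Rightarrow (4) \Rightarrow (1)$. Throughout I would rely on two basic facts: that $\operatorname{Ext}_{R}^{i}(M, N)$ may be computed as the cohomology of $\operatorname{Hom}_{R}(P_{\bullet}, N)$ for \emph{any} projective resolution $P_{\bullet}$ of $M$, and that a module $P$ is projective if and only if $\operatorname{Ext}_{R}^{1}(P, -) = 0$.

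For $(1) \Rightarrow (2)$, I would take the given projective resolution of length $n$ and compute $\operatorname{Ext}_{R}^{i}(M, N)$ from it; since the complex $\operatorname{Hom}_{R}(P_{\bullet}, N)$ is concentrated in degrees $0, \ldots, n$, its cohomology vanishes for $i > n$. The implication $(2) \Rightarrow (3)$ is immediate, being the special case $i = n+1$. For the closing implication $(4) \Rightarrow (1)$, I would choose any projective resolution of $M$, truncate it at stage $n$ to obtain an exact sequence $0 \rightarrow K \rightarrow P_{n-1} \rightarrow \cdots \rightarrow P_{0} \rightarrow M \rightarrow 0$ with $K$ the $n$-th syzygy, and invoke $(4)$ to conclude that $K$ is projective. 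This truncated sequence is then itself a projective resolution of length $n$, giving $\operatorname{proj dim}_{R} M \leq n$.

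The heart of the argument, and the step I expect to be the main obstacle, is $(3) \Rightarrow (4)$, which I would handle by dimension shifting. Given an exact sequence $0 \rightarrow K \rightarrow P_{n-1} \rightarrow \cdots \rightarrow P_{0} \rightarrow M \rightarrow 0$ with all $P_{i}$ projective, I would split it into short exact sequences $0 \rightarrow Z_{j} \rightarrow P_{j} \rightarrow Z_{j-1} \rightarrow 0$ through the intermediate syzygies, with $Z_{-1} = M$ and the final syzygy equal to $K$. For each such short exact sequence with $P_{j}$ projective, the long exact sequence for $\operatorname{Ext}_{R}(-, N)$ together with the vanishing of $\operatorname{Ext}_{R}^{i}(P_{j}, N)$ for $i \geq 1$ yields natural isomorphisms $\operatorname{Ext}_{R}^{i+1}(Z_{j-1}, N) \cong \operatorname{Ext}_{R}^{i}(Z_{j}, N)$ for $i \geq 1$. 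Composing these down the sequence produces $\operatorname{Ext}_{R}^{n+1}(M, N) \cong \operatorname{Ext}_{R}^{1}(K, N)$ for every module $N$. Since the left-hand side vanishes by hypothesis $(3)$, we obtain $\operatorname{Ext}_{R}^{1}(K, -) = 0$, whence $K$ is projective. The only real care needed is the bookkeeping of indices in the iterated shift, which is routine once the short exact sequences are set up correctly.
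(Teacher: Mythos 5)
Your proof is correct and takes essentially the same route as the source: the paper gives no proof of this lemma, citing it from Brown (Chapter VIII, Lemma (2.1)), and your cycle $(1)\Rightarrow(2)\Rightarrow(3)\Rightarrow(4)\Rightarrow(1)$, with dimension shifting across the syzygies $Z_{j}$ to get $\operatorname{Ext}_{R}^{n+1}(M,N)\cong \operatorname{Ext}_{R}^{1}(K,N)$ in the step $(3)\Rightarrow(4)$, is precisely the standard argument found there. One remark: condition (4) as printed in the paper omits the final $\rightarrow M \rightarrow 0$ of the exact sequence, and your reading, which restores it, is the correct statement from Brown---as printed the condition would be vacuous, so you were right to supply the missing term.
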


\begin{defi}\label{25}
We define the{\it{ cohomological dimension}} $\operatorname{cd} \Gamma$ as following:
\begin{equation*}
\begin{split}
  \operatorname{cd} \Gamma &= \operatorname{projdim}_{\mathbb{Z} \Gamma} \mathbb{Z} \\
  &= \inf\lbrace n : \mathbb{Z} {\textup{ admits a projective resolution of lengh }}n \rbrace \\
  &= \inf\lbrace n : H^{n}(\Gamma, -) = 0 {\textup{ for }} \forall i >n \rbrace \\
  &= \sup\lbrace n : H^{n}(\Gamma, -) \neq 0 {\textup{ for some }} \Gamma{\textup{-module }} M\rbrace
\end{split}
\end{equation*}
Recall that the functor $\operatorname{Ext}_{R}^{i} (M, -)= H^{i} (\operatorname{Hom}_{R} (M,-))$.
Then $\operatorname{Ext}_{\mathbb{Z}\Gamma}^{i} (M, -) = H^{i}(\Gamma, -)$.
By the previous Lemma, the above definition is well-defined.
\end{defi}
The following theorem is called Serre's theorem 
that claims well-definedness of the {\it{virtual cohomological dimension}}.
\begin{thm}[{\cite[Chapter V\hspace{-0.5mm}I\hspace{-0.5mm}I\hspace{-0.5mm}I, Theorem (3.1)]{Brown}}]\label{26}
If $\Gamma$ is a torsion-free group and $\Gamma^{'}$ is a subgroup of finite index,
then $\operatorname{cd} \Gamma = \operatorname{cd} \Gamma^{'}$.
\end{thm}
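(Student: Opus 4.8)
The plan is to prove the two inequalities $\operatorname{cd}\Gamma' \le \operatorname{cd}\Gamma$ and $\operatorname{cd}\Gamma \le \operatorname{cd}\Gamma'$ separately, with the torsion-freeness hypothesis entering only in the second.

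For the first inequality, which needs neither torsion-freeness nor finiteness, I would use that $\mathbb{Z}\Gamma$ is free as a $\mathbb{Z}\Gamma'$-module: a transversal of $\Gamma'$ in $\Gamma$ provides a basis. Hence every free, and thus every projective, $\mathbb{Z}\Gamma$-module restricts to a projective $\mathbb{Z}\Gamma'$-module, so any projective resolution of the trivial module $\mathbb{Z}$ over $\mathbb{Z}\Gamma$ is simultaneously a projective resolution over $\mathbb{Z}\Gamma'$. By Definition \ref{25} this yields $\operatorname{cd}\Gamma' \le \operatorname{cd}\Gamma$; in particular, if $\operatorname{cd}\Gamma' = \infty$ there is nothing left to prove.

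Suppose then that $n := \operatorname{cd}\Gamma' < \infty$, the task being to show $\operatorname{cd}\Gamma \le n$. I would fix a free resolution $\cdots \to F_1 \to F_0 \to \mathbb{Z} \to 0$ over $\mathbb{Z}\Gamma$ and set $K = \ker(F_{n-1}\to F_{n-2})$, producing an exact sequence $0 \to K \to F_{n-1}\to\cdots\to F_0 \to \mathbb{Z}\to 0$ with each $F_i$ free. By Lemma \ref{24}(4), the assertion $\operatorname{cd}\Gamma \le n$ is equivalent to $K$ being projective over $\mathbb{Z}\Gamma$; note that no prior finiteness of $\operatorname{cd}\Gamma$ is required, since this projectivity is exactly the conclusion we are after. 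Restricting to $\Gamma'$, the $F_i$ remain free, and since $\operatorname{cd}\Gamma' = n$, Lemma \ref{24}(4) applied over $\mathbb{Z}\Gamma'$ shows that $K$ is projective as a $\mathbb{Z}\Gamma'$-module. Moreover $K$ is $\mathbb{Z}$-free, being a subgroup of the free abelian group $F_{n-1}$.

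The heart of the matter is therefore the module-theoretic claim: if $\Gamma$ is torsion-free and $K$ is a $\mathbb{Z}$-free $\mathbb{Z}\Gamma$-module that is projective over the finite-index subring $\mathbb{Z}\Gamma'$, then $K$ is projective over $\mathbb{Z}\Gamma$. As a first step I would invoke the tensor identity $\mathbb{Z}[\Gamma/\Gamma'] \otimes_{\mathbb{Z}} K \cong \operatorname{Ind}_{\Gamma'}^{\Gamma}(\operatorname{Res}_{\Gamma'} K)$, whose right-hand side is $\mathbb{Z}\Gamma$-projective because induction (being left adjoint to the exact restriction functor) sends $\mathbb{Z}\Gamma'$-projectives to $\mathbb{Z}\Gamma$-projectives. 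Composing the $\Gamma$-equivariant inclusion $\mathbb{Z}\to\mathbb{Z}[\Gamma/\Gamma']$, $1\mapsto\sum_{g\Gamma'} g\Gamma'$, with the augmentation and tensoring with $K$ gives maps $K \to \mathbb{Z}[\Gamma/\Gamma']\otimes_{\mathbb{Z}} K \to K$ whose composite is multiplication by $m := [\Gamma:\Gamma']$; since the middle term is projective, $\operatorname{Ext}^{i}_{\mathbb{Z}\Gamma}(K,-)$ is annihilated by $m$ for every $i>0$. The main obstacle is precisely the passage from this $m$-torsion to the genuine vanishing of $\operatorname{Ext}^1_{\mathbb{Z}\Gamma}(K,-)$ that projectivity demands: the transfer estimate above is purely formal and cannot by itself suffice, as the case of a finite group $\Gamma$ with $\Gamma'=1$ (where $\mathbb{Z}$ is $\mathbb{Z}\Gamma'$-free but far from $\mathbb{Z}\Gamma$-projective) already demonstrates. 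I expect to close this gap by a prime-by-prime analysis of the obstruction, using that $\Gamma$ has finite virtual cohomological dimension (witnessed by $\Gamma'$): for each prime $p\mid m$ the $p$-primary part of the obstruction is controlled by the cohomology of the finite $p$-subgroups of $\Gamma$, and since $\Gamma$ is torsion-free all of these are trivial, forcing the obstruction to vanish and $K$ to be $\mathbb{Z}\Gamma$-projective. This Serre-type reduction to finite subgroups, rather than the elementary transfer bound, is where torsion-freeness is indispensable and where I anticipate the real work of the proof to lie.
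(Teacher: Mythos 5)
Your first inequality, the truncation producing $K$, the appeal to Lemma \ref{24}(4) over both group rings, and the transfer argument showing that $m\cdot\operatorname{Ext}^{i}_{\mathbb{Z}\Gamma}(K,-)=0$ for $i>0$ are all correct, and you rightly observe that this formal bound cannot suffice. But your proof stops exactly where the theorem begins: the passage from $m$-torsion to actual vanishing is asserted, not proved. The principle you invoke --- that for a group of finite virtual cohomological dimension the $p$-primary part of the obstruction is ``controlled by the cohomology of the finite $p$-subgroups'' --- is not a formal consequence of anything you have established. Detection theorems of that kind (Farrell--Tate cohomology, Brown, Ch.~X) are themselves proved by first producing a finite-dimensional contractible $\Gamma$-complex with finite stabilizers, and the existence of such an object for a group with a finite-index subgroup of finite cohomological dimension is precisely Serre's construction, which your sketch never supplies. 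As it stands the plan is circular: the ``Serre-type reduction to finite subgroups'' presupposes essentially the finiteness statement $\operatorname{cd}\Gamma<\infty$ that you are trying to prove.

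The missing idea is concrete; the paper quotes the theorem from Brown without reproducing the argument, and Brown's (Serre's) proof runs as follows. From a $\mathbb{Z}\Gamma'$-projective (may as well be free) resolution $P$ of $\mathbb{Z}$ of length $n$, form the $m$-fold tensor product over $\mathbb{Z}$ of copies of $P$ indexed by the cosets of $\Gamma'$, with $\Gamma$ acting by permuting the factors with twists (tensor induction); geometrically, this is the coinduced space built from a free contractible finite-dimensional $\Gamma'$-complex. The stabilizers of the basis elements (resp.\ of points) under this $\Gamma$-action are finite, because they meet the core $\bigcap_{g}g\Gamma' g^{-1}$ trivially, and torsion-freeness is used exactly here, to conclude that these finite stabilizers are trivial; hence the chain modules are $\mathbb{Z}\Gamma$-free and $\operatorname{cd}\Gamma\le mn<\infty$. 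A second step, which your sketch also lacks, improves $mn$ to $n$: once $N=\operatorname{cd}\Gamma<\infty$, the functor $H^{N}(\Gamma,-)$ is right exact, so the surjective counit $\operatorname{Ind}_{\Gamma'}^{\Gamma}\operatorname{Res}_{\Gamma'}M\to M$ together with Shapiro's lemma (induction equals coinduction for finite index) shows $H^{N}(\Gamma',\operatorname{Res}_{\Gamma'}M)\neq 0$ whenever $H^{N}(\Gamma,M)\neq 0$, whence $\operatorname{cd}\Gamma\le\operatorname{cd}\Gamma'$. Without these two ingredients your argument establishes only the (correct but insufficient) statement that the obstruction to projectivity of $K$ is annihilated by $[\Gamma:\Gamma']$.
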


\begin{defi}\label{27}
Let $\Gamma$ be a virtually torsion-free group.
By Theorem \ref{26}, any torsion-free subgroup of finite index has the same cohomological dimension.
Then the dimension is called the {\it{virtual cohomological dimension}},
and is denoted by $\operatorname{vcd} \Gamma$.
\end{defi}

\subsection{Bestvina-Mess formula}
In \cite{Bestvina-Mess}, Bestvina and Mess proved the following formula 
for any virtually torsion free hyperbolic group $G$:
$$ \operatorname{vcd}G = \dim \partial G + 1,$$
where $\partial G$ is the Gromov boundary of $G$ and $\dim \partial G$ is the {\it{covering dimension}} of $\partial G$.
In \cite{Bestvina-Mess}, this formula, called the Bestvina-Mess formula, is generalized for groups which admits a {\it{$\mathcal{Z}$-structure.}} Recently, Fukaya \cite{Fukaya} proved the Bestvina-mess formula for relatively hyperbolic groups.

\begin{defi}
A metrizable space $X$ is called an {\it{abusolute neighborhood retract (ANR)}}
(resp. an {\it{abusolute retract (AR)}}) if
$X$ is a neighborhood retract (resp. a retract) of an arbitrary metrizable space that
contains $X$ as a closed subspace.
\end{defi}
It is well-known that an ANR space $X$ is an AR if and only if $X$ is contractible.

\begin{defi}
Let $X$ be a ANR space.
A closed subset $Z$ of $X$ is called a $\mathcal{Z}$-set if $X\backslash Z$ is homotopy dense in $X$,
i. e.,
there exists a homotopy $H:X\times \lbrack 0, 1\rbrack :\rightarrow X$ such that 
for any $t>0$, $H_{t}(X) \subset A$.

\end{defi}

\begin{defi}
Let $\overline{X}$ be a compact AR space,
and let $Z$ be a closed subset of $\overline{X}$.
A pair $(\overline{X}, Z)$ is a $\mathcal{Z}$-structure for a group $G$
if

\begin{enumerate}
	\item $Z$ is a $\mathcal{Z}$-set of $\overline{X}$.
	
	\item There exists a proper metric $d$ on $X=\overline{X}\backslash Z$ such that
	
	2-1. $G$ geometrically acts on $(X,d).$
	
	2-2. For any compact subset $K$ of $X$ and any open covering $\mathcal{U}$ of $\overline{X}$,
	there exists $U_{g} \in \mathcal{U}$ 
	such that $gK \subset U_{g}$ for all $g \in G$ exept finitely many elements of $G$.
	
\end{enumerate}
\end{defi}

\begin{thm}[{ \cite[Theorem 1.7]{Bestvina}}]\label{Bestvina-Mess formula}
Let $(\overline{X}, Z)$ be a $\mathcal{Z}$-structure on
a virtually torsion free group $G$.
Then $\operatorname{vcd} G = \dim Z+1,$
where $\dim Z$ is the covering dimension of $Z$.
\end{thm}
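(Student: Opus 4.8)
The plan is to follow the strategy of Bestvina--Mess: translate the group cohomology of $G$ into the Čech cohomology of the boundary $Z$ by passing through the compactly supported cohomology of $X=\overline{X}\setminus Z$, and then invoke dimension theory to convert the resulting cohomological statement into a statement about the covering dimension of $Z$. First I would reduce to the torsion-free case. Since $G$ is virtually torsion free, I choose a torsion-free subgroup $G'$ of finite index; the pair $(\overline{X}, Z)$ is again a $\mathcal{Z}$-structure for $G'$, as $G'$ still acts geometrically on $X$ and $Z$ is unchanged, and by Serre's theorem (Theorem \ref{26}) $\operatorname{vcd} G = \operatorname{cd} G'$. So it suffices to prove $\operatorname{cd} G' = \dim Z + 1$ for torsion-free $G'$. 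Because the action of $G'$ on $X$ is proper, its point stabilizers are finite, hence trivial, so the action is free; thus $X$ is a contractible finite-dimensional cocompact free $G'$-space, i.e. a model for $EG'$, and in particular $G'$ is of type $\mathrm{FP}$ with $\operatorname{cd} G' < \infty$.

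Next I would set up the key isomorphism. For a free cocompact action on a contractible space there is the standard identification $H^{*}(G';M)\cong H^{*}_{c}(X;M)$ of group cohomology with the compactly supported cohomology of $X$, valid for every coefficient system. On the other hand, the $\mathcal{Z}$-set condition lets me compute $H^{*}_{c}(X)$ from the pair $(\overline{X}, Z)$: the homotopy density of $X$ in $\overline{X}$ gives $H^{*}_{c}(X;M)\cong \check{H}^{*}(\overline{X}, Z;M)$, and since $\overline{X}$ is a compact $\mathrm{AR}$ (hence cohomologically trivial), the long exact sequence of the pair collapses to $\check{H}^{*}(\overline{X}, Z;M)\cong \widetilde{\check{H}}^{*-1}(Z;M)$. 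Combining these, I obtain for every coefficient system
\[
 H^{k+1}(G';M)\;\cong\;\widetilde{\check{H}}^{k}(Z;M).
\]

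Finally I would read off the dimension. By Definition \ref{25}, $\operatorname{cd} G'$ is the largest $n$ with $H^{n}(G';M)\neq 0$ for some module $M$; by the displayed isomorphism this equals $1+\max\{k:\widetilde{\check{H}}^{k}(Z;\mathcal{M})\neq 0 \text{ for some coefficient system } \mathcal{M}\}$. It then remains to identify this maximum with $\dim Z$. Here I would invoke dimension theory: $Z$ is a finite-dimensional compact metric space, being a closed subset of the finite-dimensional $\overline{X}$, and by Alexandroff's theorem its covering dimension coincides with its cohomological dimension, which is precisely the top degree in which Čech cohomology, taken over all coefficients and closed pairs, is nonzero. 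This yields $\operatorname{cd} G' = \dim Z + 1$.

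The main obstacle is exactly this last matching. The naive version using only constant $\mathbb{Z}$-coefficients fails, because the absolute top-degree Čech cohomology of a compactum can vanish even when its covering dimension is large (for instance a disk). The genuine content of Bestvina--Mess is that for a $\mathcal{Z}$-boundary no such dimension gap occurs: one must allow arbitrary coefficient modules $M$, equivalently control the relative Čech cohomology $\check{H}^{*}(Z, A;\mathbb{Z})$ over all closed $A\subseteq Z$, and show that the isomorphism above persists in this generality, so that the cohomological dimension of $Z$ is faithfully detected by $H^{*}(G';-)$. Establishing this sheaf/coefficient version of the $\mathcal{Z}$-set isomorphism, together with the finite-dimensionality of $Z$ required for Alexandroff's theorem, is the technical heart of the proof.
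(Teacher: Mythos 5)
First, note that the paper does not prove this statement at all: it is quoted verbatim from Bestvina \cite[Theorem 1.7]{Bestvina}, so your attempt must be measured against Bestvina's (and Bestvina--Mess's) argument, which your outline follows in strategy. Your reductions are sound and match that proof: passing to a torsion-free finite-index $G'$ via Serre's theorem, observing that $(\overline{X},Z)$ remains a $\mathcal{Z}$-structure for $G'$, that the action on $X$ is free and cocompact so $G'$ is of type $\mathrm{FP}$, and the shift isomorphism $H^{k+1}_{c}(X)\cong\widetilde{\check{H}}^{k}(Z)$ coming from the $\mathcal{Z}$-set condition and the acyclicity of the compact AR $\overline{X}$. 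But your key isomorphism is wrong as stated: $H^{*}(G';M)\cong H^{*}_{c}(X;M)$ does not hold ``for every coefficient system.'' Since $X$ is contractible, every local system on it is trivial, and the correct identification for constant coefficients $M$ is $H^{*}_{c}(X;M)\cong H^{*}(G';\mathbb{Z}G'\otimes M)$, the basic case being $H^{*}_{c}(X;\mathbb{Z})\cong H^{*}(G';\mathbb{Z}G')$; group cohomology with a general $G'$-module lives on the quotient, not on $X$. This slip is repairable: because $G'$ is $\mathrm{FP}$, one has $\operatorname{cd}G'=\max\{n: H^{n}(G';\mathbb{Z}G')\neq 0\}$ (Brown, Ch.~VIII), so integer coefficients suffice and the theorem reduces to showing $\dim Z=\max\{n:\check{H}^{n}(Z;\mathbb{Z})\neq 0\}$. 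Note also that with the paper's definition ($\overline{X}$ merely a compact AR) the finite-dimensionality of $Z$ that you invoke for Alexandroff's theorem is not automatic; Bestvina assumes $\overline{X}$ is a Euclidean retract, i.e.\ finite-dimensional.

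The genuine gap is the step you yourself defer in the last paragraph, and it is the entire content of Bestvina's theorem. Your proposed substitute --- ranging over arbitrary constant coefficient modules and citing Alexandroff --- does not work, as your own disk example shows: covering dimension of a compactum is detected by the relative groups $\check{H}^{n}(Z,A;\mathbb{Z})$ over closed pairs (equivalently by sheaf coefficients), and these never appear in your isomorphism, which only sees absolute cohomology of $Z$. Symptomatically, your argument never uses condition 2-2 of the $\mathcal{Z}$-structure (the smallness condition on translates of compacta), yet this is exactly the hypothesis that powers the missing step: Bestvina--Mess use the group action to transport a nontrivial top-dimensional local cohomology class (which exists whenever $\dim Z=n$, by dimension theory) to a nontrivial global class, proving $\check{H}^{\dim Z}(Z;\mathbb{Z})\neq 0$, i.e.\ that $\mathcal{Z}$-boundaries have no dimension gap. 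Without some such use of the action, your outline establishes only the two inequalities $\operatorname{cd}G'\leq\dim Z+1$ (from the shift isomorphism plus $\check{H}^{n}(Z)=0$ for $n>\dim Z$) and $\operatorname{cd}G'\geq 1+\max\{n:\check{H}^{n}(Z;\mathbb{Z})\neq 0\}$, and these do not meet: a compactum that is a $\mathcal{Z}$-set in an AR can perfectly well have vanishing top-dimensional \v{C}ech cohomology, so the equality genuinely requires the geometric action and cannot be obtained from dimension theory alone.
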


\begin{ex}
Let $\mathbb{B}^{n}$ be the $n$-dimensional conformal model of hyperbolic spaces.
and let $\Gamma$ be a cocompact discrete subgroup of $M(B^n).$
Then the pair $(\overline{\mathbb{H}}^{n}, S^{n-1})$ is a $\mathcal{Z}$-structure of
a group $\Gamma$, and so $\operatorname{vcd} \Gamma = \dim S^{n-1} +1 = n.$
\end{ex}

\section{Automorphism groups of K3 surfaces and virtual cohomological dimension}\label{main result}

In this section, 
we consider the natural actions of the automorphism groups of K3 surfaces
on hyperbolic spaces.
First, we recall the constructions of hyperbolic spaces induced by hyperbolic lattices,
and discuss the actions of the automorphism groups of K3 surfaces on certain hyperbolic spaces, 
and moreover some properties of automorphism groups.
Secondly, we  discuss some invariants of automorphism groups, for example the virtual cohomological dimension.

\subsection{Geometrical finiteness of $\operatorname{Aut}_{s}(X)$}

Let $X$ be an algebraic K3 surface defined over $\mathbb{C}$,
and let $f$ be the quadratic form defined by 
$f(x) = \langle x, x \rangle $ for $x \in NS(X) \otimes \mathbb{R}$,
where $\langle, \rangle$ is the inner product on $NS(X)$ 
induced by the cup product on $H^{2}(X, \mathbb{Z})$.
$f$ has the signature $(1, n)$, where $n=\rho (X) -1.$
Then there exists $M \in \mathrm{GL}(n+1, \mathbb{R})$ such that $f_{n+1}(Mx) = f(x)$,
where $f_{n+1}(x)=x_{0}^{2} -(x_{1}^{2}+x_{2}^{2}+ \cdots +x_{n}^{2}).$
Therefore, we can identify the component of the set $$\{ x \in NS(X)\otimes \mathbb{R} : \langle x, x \rangle =1 \}$$
containing ample classes of $X$ with the hyperboloid model of hyperbolic $n$-space.
We denote this hyperboloid model of hyperbolic $n$-space by $\mathbb{H}^{n}_{X}$.
The hyperbolic distance function $d$ on $\mathbb{H}^{n}_{X}$
satisfies $\cosh d(x, y) = \langle x, y \rangle.$

\begin{defi}
We call $\mathbb{H}^{n}_{X}$ the {\it{hyperbolic space associated to $X$.}}
Let $\overline{P^{+}}(X)$ be the closure of the ample cone $P^{+}(X)$
in $NS(X)\otimes \mathbb{R}.$
We define $\mathbb{P}(\overline{P^{+}}(X))$
by 
$$\mathbb{P}(\overline{P^{+}}(X))
= (\overline{P^{+}}(X)\backslash \{0\})/\mathbb{R}_{> 0}.$$
For a point $x$ in $\overline{P^{+}}(X)\backslash\{0\}$,
we denote by $\lbrack x \rbrack$ the class in $\mathbb{P}(\overline{P^{+}}(X))$
corresponding to $x$.
The inclusion from $\mathbb{H}^{n}_{X}$ to $\overline{P^{+}}(X)$ induces
a continuous injective map 
$i :\mathbb{H}^{n}_{X}\rightarrow \mathbb{P}(\overline{P^{+}}(X)).$
Since $i$ is a homeomorphism from $\mathbb{H}^{n}_{X}$ to $i(\mathbb{H}^{n}_{X})$,
we identify $\mathbb{H}^{n}_{X}$ with $i(\mathbb{H}^{n}_{X}).$

The {\it{boundary of $\mathbb{H}^{n}_{X}$ at infinity}},
denoted by $\partial {\mathbb{H}}^{n}_{X}$,  
is the closure of 
$i(\mathbb{H}^{n}_{X})$ in $\mathbb{P}(\overline{P^{+}}(X)),$
which is equals to the set
$\{ x \in \overline{P^{+}}(X)\backslash\{0\}: \langle x, x \rangle =0\}/\mathbb{R}_{> 0}.$
We set $\overline{\mathbb{H}}^{n}_{X} = \mathbb{H}^{n}_{X} \cup \partial {\mathbb{H}}^{n}_{X}.$
\end{defi}


\begin{defi}
Let $A(X)$ be the ample cone.
We define $$\operatorname{Aut} (A(X)):= \lbrace \phi \in O(NS(X)): \phi A(X) = A(X) \rbrace
{\text{ and }}
A_{X} := A(X) \cap \mathbb{H}^{n}_{X}
$$

The closure of $A_{X}$ in $\overline{\mathbb{H}^{n}_{X}}$ is denoted by $\overline{A_{X}}.$
We denote $\overline{A_{X}}\backslash A_{X}$ by $\partial A_{X}$,
called {\it{the boundary associated with the ample cone of $X$}}.
\end{defi}

By the unimodulality of $H^{2}(X, \mathbb{Z})$,
the discriminant groups of $NS(X)$ and $T_{X}$ are isomorphic:
$$ A_{NS(X)} = NS(X)^{*}/NS(X) \simeq T_{X}^{*}/T_{X} = A_{T_{X}}.$$
Each isometry $\phi$ of $NS(X)$ or $T_{X}$ gives rise to an isometry of the group
$A_{NS(X)} \simeq  A_{T(X)}$, denoted by $\phi^{*}$.
%
Since the automorphism group $\operatorname{Aut}(X)$ of $X$ naturally acts on the ample cone of $X$,
there is a natural homomorphism $\rho : \operatorname{Aut} (X) \rightarrow \operatorname{Aut}(A(X)).$
As for this homomorphism,
the following theorem is well-known.
\begin{thm}[{\cite[Theorem 8.1]{Kondo2}}]\label{aut1}
The kernel and the cokernel of $\rho$ are finite groups.
\end{thm}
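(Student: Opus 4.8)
The plan is to deduce both finiteness statements from the global Torelli theorem for K3 surfaces together with Nikulin's gluing theory for the orthogonal decomposition $H^{2}(X,\mathbb{Z}) \supset NS(X)\oplus T_{X}$, using the isomorphism $A_{NS(X)}\simeq A_{T_{X}}$ recorded above. Throughout I use that every $g\in\operatorname{Aut}(X)$ acts on $H^{2}(X,\mathbb{Z})$ as a Hodge isometry preserving the ample cone, that this action respects the decomposition into $NS(X)$ and $T_{X}$, and that on $H^{2,0}(X)=\mathbb{C}\omega_{X}$ it acts by a scalar $\lambda(g)\in\mathbb{C}^{\times}$.

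First I treat the kernel. Let $g\in\ker\rho$, so $g$ acts trivially on $NS(X)$, hence trivially on $A_{NS(X)}$, and under $A_{NS(X)}\simeq A_{T_{X}}$ the induced map on $A_{T_{X}}$ is trivial as well. I claim $\ker\rho$ injects into $O(T_{X})$ via $g\mapsto g|_{T_{X}}$: if moreover $g|_{T_{X}}=\operatorname{id}$, then $g$ is the identity on the finite-index sublattice $NS(X)\oplus T_{X}$ of $H^{2}(X,\mathbb{Z})$, and fixing a finite-index sublattice pointwise forces $g=\operatorname{id}$ on all of $H^{2}(X,\mathbb{Z})$, whence $g=\operatorname{id}_{X}$ by the strong Torelli theorem. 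It remains to bound the image in $O(T_{X})$. Each such $g$ is a Hodge isometry of $T_{X}$, preserving $\mathbb{C}\omega_{X}\subset T_{X}\otimes\mathbb{C}$ and acting there by $\lambda(g)$. Since $T_{X}\otimes\mathbb{Q}$ is an irreducible rational Hodge structure, the group of Hodge isometries of $T_{X}$ embeds via $g\mapsto\lambda(g)$ into $\mathbb{C}^{\times}$; and each $\lambda(g)$, being an eigenvalue of an integral isometry of the lattice $T_{X}$ of rank at most $21$, is a root of unity whose order $m$ satisfies $\varphi(m)\le\operatorname{rk}T_{X}$ by Kronecker's theorem. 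Thus the image is a finite cyclic group, and $\ker\rho$ is finite.

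Next I treat the cokernel. Consider the homomorphism $\operatorname{Aut}(A(X))\to O(A_{NS(X)})$ sending $\phi$ to the isometry $\phi^{*}$ it induces on the finite discriminant group $A_{NS(X)}$. I claim its kernel lies in $\operatorname{im}\rho$. Indeed, if $\phi\in\operatorname{Aut}(A(X))$ acts trivially on $A_{NS(X)}\simeq A_{T_{X}}$, then $\phi$ and $\operatorname{id}_{T_{X}}$ induce the same (trivial) map on the glued discriminant, so by Nikulin's gluing criterion they extend to an isometry $\tilde{\phi}$ of the unimodular lattice $H^{2}(X,\mathbb{Z})$ equal to $\phi$ on $NS(X)$ and to the identity on $T_{X}$. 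Since $\tilde{\phi}$ fixes $\omega_{X}$ it is a Hodge isometry, and since $\phi$ preserves $A(X)$ so does $\tilde{\phi}$; by the global Torelli theorem $\tilde{\phi}$ is induced by some $h\in\operatorname{Aut}(X)$ with $\rho(h)=\phi$. Hence $\operatorname{coker}\rho$ is a quotient of $\operatorname{Aut}(A(X))/\ker(\operatorname{Aut}(A(X))\to O(A_{NS(X)}))$, which injects into the finite group $O(A_{NS(X)})$; therefore $\operatorname{coker}\rho$ is finite.

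The main technical input, and the step requiring the most care, is the precise invocation of the global Torelli theorem in the form ``a Hodge isometry of $H^{2}(X,\mathbb{Z})$ preserving the ample cone is induced by a unique automorphism of $X$,'' together with checking that the isometry $\tilde{\phi}$ produced by gluing preserves the correct component of the positive cone. One must verify that $\tilde{\phi}$ does not interchange the two components of $P(X)$, i.e. that it maps $A(X)$ to itself rather than to $-A(X)$; this is automatic because $\phi\in\operatorname{Aut}(A(X))$ already stabilizes $A(X)$ and $\tilde{\phi}|_{NS(X)}=\phi$. Given Nikulin's lattice-gluing theory and the Kronecker bound on $\lambda(g)$, the remaining verifications are routine.
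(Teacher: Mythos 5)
The paper itself offers no proof of this statement: it is quoted from \cite[Theorem 8.1]{Kondo2}, and your argument follows the same standard route as that source (and as Sterk and Piatetski-Shapiro--Shafarevich): kernel controlled through Hodge isometries of $T_{X}$, cokernel through Nikulin gluing and the global Torelli theorem. Your cokernel half is complete and correct. Triviality on $A_{NS(X)}\simeq A_{T_{X}}$ is precisely Nikulin's compatibility condition for extending $\phi\oplus\operatorname{id}_{T_{X}}$ to the unimodular overlattice $H^{2}(X,\mathbb{Z})$; the extension $\tilde{\phi}$ fixes $\omega_{X}$, hence is a Hodge isometry, and it preserves $A(X)$ because $\tilde{\phi}|_{NS(X)}=\phi$ does (note that by the paper's definition $\operatorname{Aut}(A(X))\subset O(NS(X))$ already stabilizes $A(X)$, so the component issue you flag is indeed automatic). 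Torelli then puts the kernel of $\operatorname{Aut}(A(X))\to O(A_{NS(X)})$ inside $\operatorname{im}\rho$, and since the discriminant group is finite, $\operatorname{im}\rho$ has finite index.

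There is, however, a genuine gap in the kernel half: you justify that $\lambda(g)$ is a root of unity by asserting that an eigenvalue of an integral isometry of $T_{X}$ is a root of unity ``by Kronecker's theorem.'' That assertion is false for indefinite lattices, and $T_{X}$ has signature $(2,\operatorname{rk}T_{X}-2)$: integral isometries of indefinite lattices can have eigenvalues off the unit circle (Salem numbers and reciprocal real pairs $\mu,\mu^{-1}$ occur --- this is exactly the positive-entropy phenomenon for automorphisms acting on $NS(X)$). Kronecker's theorem requires \emph{all} Galois conjugates of $\lambda(g)$ to lie on the unit circle, and those conjugates are among the eigenvalues of the integral matrix $g|_{T_{X}}$, so you must first show every eigenvalue of $g|_{T_{X}}$ has modulus $1$. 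This is where the Hodge condition must be used again: $g|_{T_{X}\otimes\mathbb{R}}$ preserves the positive-definite $2$-plane spanned by $\operatorname{Re}\omega_{X}$ and $\operatorname{Im}\omega_{X}$ (the real span of the eigenvectors $\omega_{X},\overline{\omega}_{X}$) and hence also its negative-definite orthogonal complement in $T_{X}\otimes\mathbb{R}$, so it lies in a conjugate of the compact group $O(2)\times O(\operatorname{rk}T_{X}-2)$. A discrete group of integral isometries inside a compact group is finite, which immediately gives finiteness of the image of $\ker\rho$ in $O(T_{X})$ (and, if you prefer your formulation, puts all eigenvalues on the unit circle so that Kronecker and the bound $\varphi(m)\le\operatorname{rk}T_{X}$ become legitimate). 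With that one ingredient supplied, the rest of your kernel argument --- the injection $\ker\rho\hookrightarrow O(T_{X})$ via the finite-index sublattice $NS(X)\oplus T_{X}$ and strong Torelli, and the injectivity of $g\mapsto\lambda(g)$ from the irreducibility of the rational Hodge structure $T_{X}\otimes\mathbb{Q}$ (which is where projectivity of $X$ is needed) --- is sound.
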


Since $\operatorname{Aut}(X)$ is finitely generated \cite[Proposition 2.2]{Sterk}
and Selberg's lemma \cite{Alperin}, $\operatorname{Aut}(X)$ is virtually torsion free.
\begin{defi}
Let $X$ be a K3 surface,
and let $\omega_{X}$ be a nowhere vanishing holomorphic 2-form of $X$.
A automorphism $\phi$ of $X$ is said to be {\it{symplectic}} 
if $\phi$ acts trivially on $\omega_{x}$.
The {\it{symplectic automorphism group}} of $X$, denoted by $\operatorname{Aut}_{s}(X)$,
is the group of symplectic automorphisms of $X$.

\end{defi}

The following lemma is well-known.
\begin{lem}[{\cite [Lemma 8.11]{Kondo2}}]\label{Kondo8.11}
The symplectic automorphism of $X$ acts trivially on the transcendental lattice $T_{X}$.
\end{lem}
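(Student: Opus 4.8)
The plan is to pass to the induced action $\phi^{*}$ on the full cohomology lattice $H^{2}(X,\mathbb{Z})$ and to exploit the defining characterization of $T_{X}$ as the \emph{smallest} primitive sublattice whose complexification contains the period $\omega_{X}$. First I would record the standard properties of the pullback. For a symplectic automorphism $\phi$ of $X$, the map $\phi^{*}$ is an isometry of $H^{2}(X,\mathbb{Z})$ preserving the Hodge decomposition, and by definition $\phi^{*}\omega_{X}=\omega_{X}$. Using that $\phi^{*}$ is an isometry together with $\phi^{*}\omega_{X}=\omega_{X}$, one computes $\langle \phi^{*}x,\omega_{X}\rangle=\langle \phi^{*}x,\phi^{*}\omega_{X}\rangle=\langle x,\omega_{X}\rangle$ for all $x$, so $\phi^{*}$ preserves $NS(X)=\{x\in H^{2}(X,\mathbb{Z}):\langle x,\omega_{X}\rangle=0\}$ and therefore its orthogonal complement $T_{X}=NS(X)^{\perp}$. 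Write $\psi=\phi^{*}|_{T_{X}}$, an isometry of the lattice $T_{X}$ satisfying $\psi\omega_{X}=\omega_{X}$.

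The heart of the argument is then to show $\psi=\mathrm{id}$. Consider the invariant sublattice $T_{X}^{\psi}=\ker(\psi-\mathrm{id}_{T_{X}})$. Being the kernel of an integral endomorphism it is a primitive (saturated) sublattice of $T_{X}$, and since $T_{X}$ is primitive in $H^{2}(X,\mathbb{Z})$, so is $T_{X}^{\psi}$. Because taking kernels commutes with the flat extension $\mathbb{Q}\hookrightarrow\mathbb{C}$, the $1$-eigenspace of $\psi$ on $T_{X}\otimes\mathbb{C}$ equals $T_{X}^{\psi}\otimes\mathbb{C}$; as $\psi\omega_{X}=\omega_{X}$, this yields $\omega_{X}\in T_{X}^{\psi}\otimes\mathbb{C}$. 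Now I would invoke the minimality of $T_{X}$: if $T$ is any primitive sublattice of $H^{2}(X,\mathbb{Z})$ with $\omega_{X}\in T\otimes\mathbb{C}$, then every $x\in T^{\perp}$ satisfies $\langle x,\omega_{X}\rangle=0$, whence $T^{\perp}\subseteq NS(X)$ and, taking orthogonal complements, $T\supseteq T_{X}$. Applying this with $T=T_{X}^{\psi}$ forces $T_{X}^{\psi}\supseteq T_{X}$, and since $T_{X}^{\psi}\subseteq T_{X}$ we conclude $T_{X}^{\psi}=T_{X}$, i.e.\ $\psi=\mathrm{id}$.

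The main point, and the only non-formal input, is the minimality characterization of $T_{X}$, which is precisely what makes the fixed sublattice $T_{X}^{\psi}$ collapse onto all of $T_{X}$; the remaining steps are the routine verification that $\phi^{*}$ is a Hodge isometry fixing the period. I note that the argument uses neither finiteness of the order of $\phi$ nor any positivity of the intersection form on the period plane: once the Hodge-isometry properties of $\phi^{*}$ are in place, the conclusion is purely lattice-theoretic.
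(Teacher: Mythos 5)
The paper does not actually prove this lemma --- it is quoted verbatim from Kondo's book \cite[Lemma 8.11]{Kondo2} --- so there is no in-paper argument to compare against; judged on its own, your proof is correct and complete, and it is essentially the classical argument going back to Nikulin: the fixed sublattice $T_{X}^{\psi}=\ker(\psi-\mathrm{id})$ is saturated in $T_{X}$, hence primitive in $H^{2}(X,\mathbb{Z})$, its complexification contains $\omega_{X}$ since kernels commute with the flat extension, and the minimality of $T_{X}$ among primitive sublattices whose complexification contains the period forces $T_{X}^{\psi}=T_{X}$. Each step checks out, including the minimality claim itself: for primitive $T$ with $\omega_{X}\in T\otimes\mathbb{C}$ one has $T^{\perp}\subseteq NS(X)$ and, by nondegeneracy of the unimodular form and primitivity, $T=(T^{\perp})^{\perp}\supseteq NS(X)^{\perp}=T_{X}$. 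Your closing remark is also accurate and worth keeping: textbook variants often invoke finiteness of the order of $\phi^{*}|_{T_{X}}$ or irreducibility of the rational Hodge structure $T_{X}\otimes\mathbb{Q}$ (the latter requiring $X$ projective), whereas your minimality argument bypasses both and is purely lattice-theoretic.
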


Let $\phi$ be the symplectic automorphism of $X$.
Then, by Lemma \ref{Kondo8.11}, $\phi$ acts trivially on $A_{NS(X)}\simeq A_{T_{X}}.$
By the Torelli theorem, we can identify $\operatorname{Aut}_{s}(X)$ with the subgroup 
$\{ \phi \in O^{+}(NS(X)) : \phi A(X) = A(X) , \phi^{*}|_{A_{NS(X)}} = \mathrm{id}_{A_{NS(X)}} \}$.
Therefore, $\operatorname{Aut}_{s}(X)$ is a subgroup of finite index in $\operatorname{Aut}(A(X)).$
Since any element of $O^{+}(NS(X))$
is an isometry of the hyperbolic space $\mathbb{H}^{n}_{X}$
associated $X$, the symplectic automorphism group $\operatorname{Aut}_{s}(X)$ of $X$ is 
an $n$-dimensional discrete subgroup of isometries of $\mathbb{H}^{n}_{X}$.

\begin{prop}\label{ample cone vs Dirichlet domain}
Let $X$ be a K3 surface, and let $a \in A_{X}.$
Then the Dirichlet domain $D(a)$ of $\mathbb{H}^{n}_{X}$, with center $a$, for the reflection group
$W(X)$ is equal to $A_{X}=A(X)\cap \mathbb{H}^{n}_{X}.$
\end{prop}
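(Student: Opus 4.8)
The plan is to prove the two inclusions $D(a) \subseteq A_X$ and $A_X \subseteq D(a)$ separately, using that $W(X)$ is generated by the reflections $s_\delta$ and that $A_X$ is the Weyl chamber (fundamental domain) of $W(X)$ in $\mathbb{H}^n_X$. Throughout I use the relation $\cosh d(x,y) = \langle x, y \rangle$ on $\mathbb{H}^n_X$; since $\cosh$ is increasing, the half-space attached to each $g \in W(X)\setminus\{1\}$ can be written as $H_g^{+}(a) = \{ x \in \mathbb{H}^n_X : \langle x, a \rangle < \langle x, ga \rangle \}$.

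For the inclusion $D(a) \subseteq A_X$, I first treat a single reflection $g = s_\delta$ with $\delta \in \Delta^{+}(X)$. Using $s_\delta a - a = \langle a, \delta \rangle \delta$, one computes $\langle x, s_\delta a \rangle - \langle x, a \rangle = \langle a, \delta \rangle \langle x, \delta \rangle$, and since $\langle a, \delta \rangle > 0$ for $a \in A_X$, this shows $H_{s_\delta}^{+}(a) = \{ x \in \mathbb{H}^n_X : \langle x, \delta \rangle > 0 \}$; that is, $H_{s_\delta}^{+}(a)$ is exactly the positive side of the mirror $\delta^{\perp}$. Intersecting over all $\delta \in \Delta^{+}(X)$ reproduces the defining inequalities of $A_X$. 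Because the reflections $s_\delta$ with $\delta \in \Delta^{+}(X)$ form a subset of $W(X)\setminus\{1\}$, the Dirichlet domain $D(a) = \bigcap_{g \neq 1} H_g^{+}(a)$ is contained in this smaller intersection, giving $D(a) \subseteq A_X$.

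For the reverse inclusion I must show $d(x, a) < d(x, wa)$ for every $x \in A_X$ and every $w \in W(X)\setminus\{1\}$. The key geometric lemma is that reflecting a point across a mirror onto the same side as $x$ strictly decreases the hyperbolic distance: if $\langle x, \delta \rangle > 0$ and $\langle p, \delta \rangle < 0$, then $\langle x, s_\delta p \rangle - \langle x, p \rangle = \langle p, \delta \rangle \langle x, \delta \rangle < 0$, hence $d(x, s_\delta p) < d(x, p)$. I then induct on the gallery length $\ell(w)$, the number of walls of the chamber complex separating $A_X$ from $wA_X$. For $\ell(w) = k \geq 1$ there is a wall $\delta^{\perp}$ of $A_X$ (with $\delta \in \Delta^{+}(X)$) crossed by a minimal gallery from $A_X$ to $wA_X$; this wall separates the two chambers, so $\langle wa, \delta \rangle < 0$ and $\ell(s_\delta w) = k-1$. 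Applying the lemma to $p = wa$ gives $d(x, s_\delta wa) < d(x, wa)$, while the inductive hypothesis (or, when $s_\delta w = 1$, the lemma applied directly) gives $d(x, a) \leq d(x, s_\delta wa)$. Combining, $d(x, a) < d(x, wa)$, so $x \in H_w^{+}(a)$ for all $w \neq 1$, i.e. $A_X \subseteq D(a)$.

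The main obstacle is this reverse inclusion, where I rely on the chamber-complex structure of the action of $W(X)$ on $\mathbb{H}^n_X$: that $A_X$ is a Weyl chamber and that a minimal gallery from $A_X$ to $wA_X$ first crosses a wall of $A_X$, reducing the length under the corresponding reflection. This is the classical description of the ample (Kähler) cone as a fundamental domain of the reflection group generated by the $(-2)$-roots, which I take as known; the only hyperbolic-geometry input beyond it is the elementary distance-comparison lemma above.
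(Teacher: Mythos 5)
Your proof is correct, and its first half is exactly the paper's: the computation $\langle x, s_{\delta}a\rangle - \langle x, a\rangle = \langle a,\delta\rangle\langle x,\delta\rangle$ together with $\langle a,\delta\rangle>0$ identifies $H^{+}_{s_{\delta}}(a)$ with $\{x\in\mathbb{H}^{n}_{X} : \langle x,\delta\rangle>0\}$, whence $D(a)\subseteq\bigcap_{\delta\in\Delta^{+}(X)}H^{+}_{s_{\delta}}(a)=A_{X}$. Where you genuinely diverge is the reverse inclusion. The paper disposes of it abstractly in one line: $D(a)$ is a locally finite fundamental domain for $W(X)$ by the general theory of Dirichlet domains (\cite[Theorem 6.6.13]{Ratcliffe}), $A_{X}$ is a fundamental domain by the classical chamber description of the ample cone, and a fundamental domain contained in another must coincide with it (the rigidity behind this is the same fact, $w\overline{A_{X}}\cap A_{X}\neq\varnothing\Rightarrow w=1$, that the paper cites from \cite[Theorem 6.6.4]{Ratcliffe} in the proof of Lemma \ref{generalized polytope2}). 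You instead verify the defining strict inequalities $d(x,a)<d(x,wa)$ for all $x\in A_{X}$ and $w\neq 1$ directly, via the reflection--distance lemma ($\langle x,\delta\rangle>0$ and $\langle p,\delta\rangle<0$ imply $d(x,s_{\delta}p)<d(x,p)$) and induction on gallery length, using that a wall of $A_{X}$ separating $A_{X}$ from $wA_{X}$ satisfies $\ell(s_{\delta}w)=\ell(w)-1$; this is the classical Coxeter-theoretic proof that an open chamber is a Dirichlet domain. What your route buys: it needs no theory of Dirichlet domains as fundamental domains and no nested-domain rigidity, it produces the strict inequalities explicitly, and in effect it reproves rather than invokes half of the fundamental-domain property of $A_{X}$. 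What it costs: the chamber-complex combinatorics (generation of $W(X)$ by wall reflections of $A_{X}$, existence of minimal galleries, the exchange-type length reduction) must be imported, and since $W(X)$ is a hyperbolic reflection group whose chamber may have infinitely many walls, this is Vinberg-type theory rather than the finite Coxeter textbook case; it does go through, because local finiteness of the tessellation ensures only finitely many walls separate $A_{X}$ from $wA_{X}$, but you should say so rather than only gesture at ``the classical description.'' So both arguments rest on the same classical input, and the bookkeeping is comparable; yours is longer but elementary and explicit, the paper's is a short corollary of Ratcliffe's general results. One small point to make explicit in your write-up: the Dirichlet domain is defined for a center with trivial stabilizer, which holds here precisely because $a$ lies in the open chamber $A_{X}$ (so in particular $wa\neq a$ for $w\neq 1$, as your inequalities require).
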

\begin{proof}
Recall that $H^{+}_{w}(a) = \{ x \in \mathbb{H}^{n} : d(x, a) < d(x, wa) \hspace{1mm}\}$
for $w \in W_{X}.$
If $w$ is the reflection $s_{\delta}$ for some $(-2)$-root $\delta$,
\begin{align*}
&\hspace{7mm} d(x, a)< d(x, s_{\delta}a) \\
&\Leftrightarrow \hspace{1mm} \cosh d(x, a) < \cosh d(x, s_{\delta}a) \\
&\Leftrightarrow \hspace{1mm} \langle x, a \rangle < \langle x, s_{\delta}a \rangle \\
&\Leftrightarrow \hspace{2mm}  0< \langle a, \delta \rangle \langle x, \delta \rangle \\
&\Leftrightarrow \hspace{2mm}  0< \langle x, \delta \rangle
\end{align*}
Then we have that 
$H^{+}_{s_{\delta}}(a) = \{x \in \mathbb{H}^{n}_{X}: \langle x, \delta \rangle >0 \}.$ 
Since $A(X)$ is a fundamental domain for $W(X)$ on the positive cone $P^{+}(X)$ of $X$ whose sides
are bounded by $(-2)$-root, we have 
$A_{X}=\cap_{\delta \in \Delta^{+}(X)} H^{+}_{s_{\delta}}.$

By the definition of $D(a)$, we have $D(a) \subset A_{X}.$
Since $D(a)$ and $A_{X}$ are fundamental domains for $W(X)$, we have $D(a)=A_{X}.$
\end{proof}

$O^{+}(NS(X))$ is a geometrically finite subgroup of the first kind.
By Theorem \ref{hyperbolic_geometry6},
we have the following:
\begin{lem}\label{generalized polytope}
Let $X$ be a K3 surface.
Then a fundamental domain of $\mathbb{H}^{n}_{X}$ for $O^{+}(NS(X))$ is a generalized polytope.
In particular, the action of $O^{+}(NS(X))$ on $\mathbb{H}^{n}_{X}$ is finite covolume.
\end{lem}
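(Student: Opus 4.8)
The plan is to deduce the statement directly from the equivalences in Theorem \ref{hyperbolic_geometry6}, once the hypotheses of that theorem are verified for $\Gamma = O^{+}(NS(X))$ acting on $\mathbb{H}^{n}_{X}$. First I would record that $O^{+}(NS(X))$ is a discrete subgroup of $I(\mathbb{H}^{n}_{X})$: every element preserves the integral lattice $NS(X) \subset NS(X)\otimes \mathbb{R}$, so the orbit of a fixed lattice basis is discrete, and hence the group is discrete in the isometry group of the associated hyperboloid model $\mathbb{H}^{n}_{X}$.

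The crucial input is that $O^{+}(NS(X))$ is geometrically finite and of the \emph{first kind}, i.e. $\Lambda(O^{+}(NS(X))) = \partial \mathbb{H}^{n}_{X}$. This is where the arithmetic nature of the group enters: $O^{+}(NS(X))$ is the group of integral isometries of the quadratic form $f$ of signature $(1,n)$, hence an arithmetic subgroup of $O^{+}(n, 1)$. By the reduction theory of Borel--Harish-Chandra, equivalently Siegel's theorem on unit groups of indefinite integral quadratic forms, such a group acts on $\mathbb{H}^{n}_{X}$ with a fundamental domain of finite hyperbolic volume. Applying the implication $(1) \Rightarrow (2)$ of Theorem \ref{hyperbolic_geometry6} then shows that $O^{+}(NS(X))$ is geometrically finite and that $\Lambda(O^{+}(NS(X))) = \partial \mathbb{H}^{n}_{X}$, which is exactly the assertion preceding the lemma.

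With the hypotheses in place, the conclusion follows from the implication $(2) \Rightarrow (3)$ of Theorem \ref{hyperbolic_geometry6}: there exist finitely many points $p_{1}, \ldots, p_{k}$ of $\overline{\mathbb{H}^{n}_{X}}$ such that the hyperbolic convex hull $P$ of $\{p_{1}, \ldots, p_{k}\}$ in $\overline{\mathbb{H}^{n}_{X}}$ satisfies that $P \cap \mathbb{H}^{n}_{X}$ is a fundamental domain of finite volume. A convex hull of finitely many points of $\overline{\mathbb{H}^{n}_{X}}$, where some $p_{i}$ are allowed to be ideal points on $\partial \mathbb{H}^{n}_{X}$, is by definition a generalized polytope; this yields the first assertion, and the finiteness of its volume, via the equivalence $(3) \Leftrightarrow (1)$, yields the second.

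I expect the main obstacle to be the first-kind property $\Lambda(O^{+}(NS(X))) = \partial \mathbb{H}^{n}_{X}$. Discreteness and the final application of Theorem \ref{hyperbolic_geometry6} are essentially formal, but the finite-covolume statement genuinely rests on the arithmeticity of $O^{+}(NS(X))$ rather than on the hyperbolic geometry developed so far, so one must cite (or reprove) the relevant reduction theory carefully. One should also keep track of the fact that the $p_{i}$ are permitted to lie on $\partial \mathbb{H}^{n}_{X}$, since the cusps of the arithmetic group correspond precisely to the ideal vertices of the generalized polytope.
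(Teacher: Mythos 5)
Your proposal is correct and takes essentially the same route as the paper, which offers no independent argument for this lemma beyond asserting that $O^{+}(NS(X))$ is geometrically finite of the first kind and then invoking the equivalences of Theorem \ref{hyperbolic_geometry6} to obtain the generalized polytope (the convex hull of finitely many points of $\overline{\mathbb{H}^{n}_{X}}$, ideal vertices allowed) and the finite covolume. The only difference is that you explicitly justify the first-kind, finite-covolume premise via Siegel's theorem and Borel--Harish-Chandra reduction theory, a step the paper leaves implicit, which is a legitimate and careful filling-in rather than a different approach.
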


\begin{lem}\label{generalized polytope2}
Let $a \in A_{X}$,
and let $P$ be the Dirichlet domain, centered at $a$, of $\mathbb{H}^{n}_{X}$ for $O^{+}(NS(X))$.
Then, $\operatorname{Aut}(A(X))\cdot \overline{P} = \overline{A_{X}}$,
where $\overline{P}$ (resp. $\overline{A_{X}}^{H}$)
are the closure of $P$ (resp. $A_{X}$) in $\mathbb{H}^{n}_{X}.$
\end{lem}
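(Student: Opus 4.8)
The plan is to prove the two inclusions separately, the main structural input being that $O^{+}(NS(X))$ decomposes as the semidirect product $W(X) \rtimes \operatorname{Aut}(A(X))$. This is the standard fact that the reflection group $W(X)$ acts simply transitively on the Weyl chambers cut out by the $(-2)$-roots, that $A_{X}$ is one such chamber, and that $\operatorname{Aut}(A(X))$ is precisely its stabilizer in $O^{+}(NS(X))$; concretely, any $g \in O^{+}(NS(X))$ sends the chamber $A_{X}$ to some chamber $wA_{X}$ with $w \in W(X)$ uniquely determined, and then $\phi := w^{-1}g$ fixes $A_{X}$, so that $g = w\phi$ with $\phi \in \operatorname{Aut}(A(X))$. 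I also record the elementary observation that, since $W(X) \subseteq O^{+}(NS(X))$, the Dirichlet domain $P$ for the larger group is contained in the Dirichlet domain $D_{W(X)}(a)$ for the smaller one, which equals $A_{X}$ by Proposition \ref{ample cone vs Dirichlet domain}; hence $P \subseteq A_{X}$ and $\overline{P} \subseteq \overline{A_{X}}$.

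For the inclusion $\operatorname{Aut}(A(X)) \cdot \overline{P} \subseteq \overline{A_{X}}$, I would fix $\phi \in \operatorname{Aut}(A(X))$. Since $\phi A(X) = A(X)$ and $\phi$ preserves $\mathbb{H}^{n}_{X}$, we have $\phi A_{X} = A_{X}$ and therefore $\phi \overline{A_{X}} = \overline{A_{X}}$; combining this with $\overline{P} \subseteq \overline{A_{X}}$ gives $\phi \overline{P} \subseteq \phi \overline{A_{X}} = \overline{A_{X}}$. Taking the union over $\phi \in \operatorname{Aut}(A(X))$ yields this inclusion.

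For the reverse inclusion I would first show $A_{X} \subseteq \operatorname{Aut}(A(X)) \cdot \overline{P}$ and then close up. Let $x \in A_{X}$. Because $P$ is a fundamental domain, the tiles $\{g\overline{P} : g \in O^{+}(NS(X))\}$ cover $\mathbb{H}^{n}_{X}$, so $x \in g\overline{P}$ for some $g$; writing $g = w\phi$ as above and using $\phi\overline{P} \subseteq \overline{A_{X}}$ one gets $x \in w\phi\overline{P} \subseteq w\overline{A_{X}}$. Since $A_{X}$ is open and the open chambers $\{wA_{X}\}_{w \in W(X)}$ are pairwise disjoint, a point of $A_{X}$ can lie in $w\overline{A_{X}} = \overline{wA_{X}}$ only for $w = 1$; hence $g = \phi \in \operatorname{Aut}(A(X))$ and $x \in \phi\overline{P}$. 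Thus $A_{X} \subseteq \operatorname{Aut}(A(X)) \cdot \overline{P}$. Finally, since the Dirichlet tiling of $O^{+}(NS(X))$ is locally finite, its subfamily $\{\phi\overline{P} : \phi \in \operatorname{Aut}(A(X))\}$ is a locally finite family of closed sets, so its union $\operatorname{Aut}(A(X)) \cdot \overline{P}$ is closed; containing the dense subset $A_{X}$ of $\overline{A_{X}}$, it therefore contains $\overline{A_{X}}$, which gives the reverse inclusion.

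The step I expect to require the most care is the passage $x \in A_{X},\ x \in g\overline{P} \Rightarrow g \in \operatorname{Aut}(A(X))$: it hinges on correctly matching each $P$-tile to the unique $W(X)$-chamber that contains it, via $g\overline{P} \subseteq w\overline{A_{X}}$, together with the fact that $A_{X}$ coincides with the interior of $\overline{A_{X}}$, so that an interior point of the base chamber cannot meet the closure of any other chamber. The only other point needing attention is local finiteness, which I invoke to turn the pointwise covering of $A_{X}$ into the closed equality on $\overline{A_{X}}$; this is guaranteed because Dirichlet domains are locally finite fundamental domains.
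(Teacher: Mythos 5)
Your proposal is correct and takes essentially the same approach as the paper's proof: the containment $P \subseteq A_{X}$ by comparing the Dirichlet domains for $W(X)$ and $O^{+}(NS(X))$, the decomposition $g = w\phi$ with $w \in W(X)$ and $\phi \in \operatorname{Aut}(A(X))$, the reduction to $w = 1$ (where the paper cites Ratcliffe's Theorem 6.6.4 and you argue directly from the disjointness of the open chambers, which is the same content), and local finiteness of the Dirichlet tessellation to conclude that $\operatorname{Aut}(A(X))\cdot\overline{P}$ is closed. No gaps to report.
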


\begin{proof}
By Lemma \ref{ample cone vs Dirichlet domain} and the definition of $P$,
we have 
\begin{align*}
P &=\{ x\in \mathbb{H}^{n}_{X}: d(x,a) <d(x, ga) {\text{ for any }} g\in O^{+}(NS(X)) \}, \\
A_{X} &= \{ x\in \mathbb{H}^{n}_{X}: d(x,a) <d(x, wa) {\text{ for any }} w \in W(X) \}.
\end{align*}
Then we have $P \subset A_{X}$ and so $\overline{P} \subset \overline{A_{X}}^{H}.$
Hence we obtain $\operatorname{Aut}(A(X))\cdot \overline{P} \subset \overline{A_{X}}^{H}.$

Conversely,
because the tessellation $\{g\overline{P}: g\in O^{+}(NS(X))\}$ of $\mathbb{H}^{n}_{X}$ is locally finite, the union 
$\operatorname{Aut}(A(X))\cdot\overline{P}$
is a closed subset.
Then it is enough to show that $A_{X}\subset \operatorname{Aut}(A(X))\cdot\overline{P}.$
For any $x\in A_{X}$, there exist $w \in W(X)$ and $\gamma \in \operatorname{Aut}(A(X))$
such that $x \in w\gamma \overline{P}.$
Then we have $x \in w\gamma \overline{P}\cap A_{X} \subset w\overline{A_{X}}^{H}\cap A_{X}.$
By \cite[Theorem 6.6.4]{Ratcliffe},
we have $w=1$. In particular, we have $x\in \gamma\overline{P}$.
Hence we have $A_{X}\subset \operatorname{Aut}(A(X))\cdot\overline{P}$.
\end{proof}

\begin{defi}\label{the sets of bounded parabolic points}
Let $P$ be the same as in Lemma \ref{generalized polytope2}.
We define sets of bounded parabolic points of $O^{+}(NS(X))$ as follows:

\begin{itemize}
	\item $I_{P} =$ the set of ideal vertices of $P$,
	
	\item $V = Aut(A(X))\cdot I_{P}$,
	
	\item $V_{NS}= O^{+}(NS(X))\cdot I_{P},$
	
	\item $C_{X} = \{c \in V : \operatorname{vcd} W(X)_{c} < n-1 \}.$
\end{itemize}
\end{defi}

\begin{defi}\label{limit set of X}
Let $X$ be a K3 surface.
Since $\operatorname{Aut}_{s}(X)$ is of finite index in $\operatorname{Aut}(A(X))$,
the limit set of $\operatorname{Aut}_{s}(X)$ is equal to that of $\operatorname{Aut}(A(X))$.
We denote this limit set by $\Lambda_{X}$,
and is called the {\it{limit set of the K3 surface $X$.}}
We denote the hyperbolic convex hull 
of $\Lambda_{X}$ by $C(X)$,
and put $B(X) = C(X) \cap \mathbb{H}^{n}_{X}$.
\end{defi}

\begin{lem}\label{horoballs}
For any $c\in V\backslash C_{X}$,
there exists a horoball $U(c)$ based at $c$ such that $B(X)\cap U(c) =\varnothing$.
Moreover, the point $c$ is isolated in the boundary $\partial A_{X}$ associated with the ample cone.
\end{lem}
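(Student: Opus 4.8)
The plan is to exploit the dichotomy, encoded in the definition of $C_{X}$, between cusps of the ample cone that are ``filled in'' by the reflection group $W(X)$ and those that survive as genuine parabolic points of $\operatorname{Aut}(A(X))$. Throughout I would use that, by Lemma \ref{generalized polytope}, $O^{+}(NS(X))$ acts on $\mathbb{H}^{n}_{X}$ with finite covolume, so every point of $V \subset V_{NS}=O^{+}(NS(X))\cdot I_{P}$ is a bounded parabolic point whose stabilizer $O^{+}(NS(X))_{c}$ is a crystallographic group acting cocompactly on a horosphere based at $c$; in particular $\operatorname{vcd} O^{+}(NS(X))_{c}=n-1$. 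Since $W(X)_{c}\subset O^{+}(NS(X))_{c}$, the hypothesis $c\in V\setminus C_{X}$ is equivalent to $\operatorname{vcd} W(X)_{c}=n-1$, and extracting the geometric content of this equality is the engine of the whole argument.

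First I would record the consequences of $\operatorname{vcd} W(X)_{c}=n-1$. The parabolic subgroup $W(X)_{c}$ is a discrete group of isometries of the Euclidean horosphere $E^{n-1}$ at $c$, and for such a group the virtual cohomological dimension equals the rank of its translation lattice; hence $\operatorname{vcd} W(X)_{c}=n-1$ forces full rank, so $W(X)_{c}$ acts cocompactly on the horosphere. Consequently $W(X)_{c}$ has finite index in $O^{+}(NS(X))_{c}$, since a discrete cocompact subgroup of $\operatorname{Isom}(E^{n-1})$ has finite index in any discrete overgroup. Because $A(X)$ is a fundamental chamber for $W(X)$ one has $W(X)\cap\operatorname{Aut}(A(X))=\{1\}$; intersecting this with the stabilizers shows that $\operatorname{Aut}(A(X))_{c}$ embeds into the finite group $O^{+}(NS(X))_{c}/W(X)_{c}$, and is therefore finite.

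Next I would prove that $c$ is isolated in $\partial A_{X}$. By Proposition \ref{ample cone vs Dirichlet domain}, $A_{X}$ is the Dirichlet domain of $W(X)$ centered at $a$, hence a locally finite Coxeter polytope whose walls are the reflecting hyperplanes $\{\langle x,\delta\rangle=0\}$. The walls incident to $c$ generate $W(X)_{c}$, which by the previous step tessellates the horosphere at $c$ with compact fundamental chamber; combined with local finiteness of the wall family, this forces the cross-section of $A_{X}$ by a small horosphere to be compact, so that for a sufficiently small horoball $U(c)$ the intersection $\overline{A_{X}}\cap U(c)$ is a single cusp horn whose only ideal point is $c$. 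Thus $\overline{U(c)}$ meets $\partial A_{X}$ solely at $c$, giving the isolation statement. This step — controlling the walls of $A_{X}$ near the cusp and ruling out other boundary points accumulating at $c$ — is where I expect the main obstacle to lie, and it is exactly here that the compactness of the horospherical cross-section and the local finiteness of the Dirichlet tessellation are indispensable.

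Finally I would deduce $c\notin\Lambda_{X}$ and construct the horoball. Since $\operatorname{Aut}(A(X))$ preserves $A_{X}$, every limit point lies in $\overline{A_{X}}$, so $\Lambda_{X}\subseteq\partial A_{X}$; and because $\operatorname{Aut}(A(X))_{c}$ is finite, only finitely many translates $\gamma\,\overline{A_{X}}$ with $\gamma\in\operatorname{Aut}(A(X))$ meet the horn $U(c)\cap A_{X}$, so no orbit can accumulate at $c$ and hence $c\notin\Lambda_{X}$. Combined with the isolation of $c$ in $\partial A_{X}$, the compact set $\Lambda_{X}$ then avoids an entire neighborhood of $c$ at infinity. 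Passing to the upper half-space model with $c=\infty$, the set $\Lambda_{X}$ becomes a bounded subset of $E^{n-1}$, so its hyperbolic convex hull — and therefore $B(X)=C(X)\cap\mathbb{H}^{n}_{X}$ — lies below some finite height, and any horoball $\{x_{n}>T\}$ with $T$ large is disjoint from $B(X)$. Taking $U(c)$ to be such a horoball, shrunk if necessary to also realize the isolation above, completes both assertions.
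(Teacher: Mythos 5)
Your overall route matches the paper's: from $c\in V\setminus C_{X}$ you get $\operatorname{vcd}W(X)_{c}=n-1$, hence $W(X)_{c}$ is crystallographic on the horosphere at $c$; passing to the upper half-space model with $c=\infty$, the identification of $A_{X}$ with the Dirichlet domain of $W(X)$ (Proposition \ref{ample cone vs Dirichlet domain}) puts $\overline{A_{X}}$ inside the vertical chimney over the compact Euclidean Dirichlet domain $K$ of $W(X)_{\infty}$, which yields both the isolation of $c$ in $\partial A_{X}$ and, once $c\notin\Lambda_{X}$ is known, a horoball above the convex hull $C(K)$ disjoint from $B(X)$. Your wall-by-wall derivation of the compact horospherical cross-section is a roundabout (and slightly fragile, since local finiteness of the walls in $\mathbb{H}^{n}_{X}$ does not by itself control accumulation at the ideal point) version of the direct containment $A_{X}\subset D_{W(X)_{\infty}}(a)$, and your observation that $\operatorname{Aut}(A(X))_{c}$ is finite is correct, though the paper does not use it in this proof.

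The genuine gap is your justification that $c\notin\Lambda_{X}$. The sentence ``only finitely many translates $\gamma\overline{A_{X}}$ with $\gamma\in\operatorname{Aut}(A(X))$ meet the horn'' is vacuous: every $\gamma\in\operatorname{Aut}(A(X))$ satisfies $\gamma\overline{A_{X}}=\overline{A_{X}}$, so there is exactly one such translate and it does meet the horn. Worse, the principle you are invoking --- that finiteness of the stabilizer $\operatorname{Aut}(A(X))_{c}$ prevents orbits from accumulating at $c$ --- is false in general: conical limit points of a discrete group typically have trivial stabilizer, yet every orbit accumulates at every limit point, so finiteness of the stabilizer alone never excludes membership in the limit set. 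The paper closes exactly this hole differently: in the non-elementary case $\Lambda_{X}$ is perfect, and since the chimney containment gives $\phi(\Lambda_{X})\setminus\{\infty\}\subset K$ with $K$ bounded, the point $\infty$ would be an isolated point of $\phi(\Lambda_{X})$ if it belonged to it, hence $\infty\notin\phi(\Lambda_{X})$ (the elementary case being dismissed as obvious). Alternatively, your argument can be repaired by a height estimate rather than a counting one: take a horoball $B$ at $\infty$ precisely invariant under $O^{+}(NS(X))_{\infty}$ (Theorem \ref{Ratcliffe_thm_12.6.5}); if $ga\in B$ and $g'a\in B$ then $g^{-1}B\cap g'^{-1}B\neq\varnothing$ forces $g'g^{-1}\in O^{+}(NS(X))_{\infty}$, whose elements preserve Euclidean height, so the orbit $O^{+}(NS(X))a$ meets $B$ at a single height; combined with the horizontal boundedness coming from $K$, this shows $\operatorname{Aut}(A(X))a$ cannot converge to $\infty$. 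One of these two repairs is needed; as written, the step fails.
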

\begin{proof}
If $\operatorname{Aut}(A(X))$ is elementary, this lemma is obvious.

Suppose that  $\operatorname{Aut}(A(X))$ is non-elementary.
Let $\phi :\mathbb{H}^{n}_{X} \rightarrow U^{n}$ be an isometry 
from $\mathbb{H}^{n}_{X}$ to the upper half-space model $U^{n}$
such that $\phi(c) = \infty.$
The isometry $\phi$ induces the natural homeomorphisms from 
$\partial \mathbb{H}^{n}_{X}$
to $\partial U^{n}.$ We also denote this homeomorphism by $\phi$.
Let $b$ be a point of $U^{n}$ corresponding to a point $a$ in $A_{X}$.
We can consider the stabilizer group $W(X)_{\infty}$ as the Poincar\'{e} extension of a subgroup of
$I(E^{n-1})$.
Since $\operatorname{vcd} W(X)_{c} = n-1$, $W(X)_{\infty}$ is a crystallographic group of $E^{n-1}$.
Then the Dirichlet domain $K$ of $E^{n-1}$ for $W(X)_{\infty}$, centered at $b$, is compact.
Proposition \ref{ample cone vs Dirichlet domain} implies that
$\Lambda_{X} \subset \partial A_{X}.$
Then we have $\phi(\Lambda_{X}) \subset K\cup \{\infty \}.$ 
Since $\operatorname{Aut}(A(X))$ is non-elementary, the limit set $\Lambda_{X}$ is perfect:
i.e., every point of $\Lambda_{X}$ is not isolated.
Hence the infinity $\infty$ is not in $\phi(\Lambda_{X})$.
By the compactaness of $K$, there exists a horoball $B_{\infty}$ based at $\infty$
such that $C(K)\cap B_{\infty} = \varnothing,$
where $C(K)$ is the hyperbolic convex hull of $K$.
In particular, $B(X) \cap B_{\infty}=\varnothing.$
The inverse image $\phi^{-1}(B_{\infty})$ of $B_{\infty}$ by $\phi$ is the desired horoball.
\end{proof}


\begin{lem}\label{the rank of MW}
Let $e$ be an elliptic divisor of $X$,
let $f_{e}$ be an elliptic fibration associated to $e$,
and let $\operatorname{Aut}(X)_{e}$ be the stabilizer group of $e$.
Then 
$$
\operatorname{vcd{\operatorname{Aut}(X)_{e}}}
=  \operatorname{vcd} \operatorname{MW}(f_{e}) {\text{, and }}
\operatorname{vcd}W(X)_{e}
= \sum_{v\in \mathbb{P}^{1}}{(m_{v}-1)},$$
where $\mathrm{MW}(f_{e})$ is the Mordell-Weil group of $f_{e}$,
and 
$m_{v}$ be the number of connected components of 
the fiber $f^{-1}(v)$ for $v\in \mathbb{P}^{1}.$
\end{lem}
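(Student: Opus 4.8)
The plan is to read off both virtual cohomological dimensions from the structure of the stabilizer of the primitive isotropic class $e$ (the fiber class of $f_e$) inside the full arithmetic group $O^{+}(NS(X))$, viewing $e$ as a cusp of the finite-covolume action on $\mathbb{H}^{n}_{X}$. Two reductions are immediate: by Theorem \ref{aut1} the homomorphism $\rho$ has finite kernel and cokernel, so $\operatorname{vcd}\operatorname{Aut}(X)_{e}=\operatorname{vcd}\operatorname{Aut}(A(X))_{e}$; and since $\operatorname{MW}(f_e)$ is finitely generated abelian, $\operatorname{vcd}\operatorname{MW}(f_e)=\operatorname{rk}\operatorname{MW}(f_e)=r(f_e)$. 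It therefore suffices to prove $\operatorname{vcd}W(X)_{e}=\sum_{v}(m_v-1)$ and $\operatorname{vcd}\operatorname{Aut}(A(X))_{e}=r(f_e)$, and I will obtain both from one short exact sequence together with the Shioda--Tate formula.

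First I would pin down the full stabilizer. Since $O^{+}(NS(X))$ acts on $\mathbb{H}^{n}_{X}$ with finite covolume (Lemma \ref{generalized polytope}), its limit set is all of $\partial\mathbb{H}^{n}_{X}$ (Theorem \ref{hyperbolic_geometry6}), so the rational cusp $e$ has a crystallographic stabilizer acting cocompactly on the horospherical $E^{n-1}$; finite covolume forces this cusp to be of full rank, exactly as in the crystallographic description appearing in Lemma \ref{horoballs}, giving $\operatorname{vcd}O^{+}(NS(X))_{e}=n-1=\rho(X)-2$. Next I would compute $\operatorname{vcd}W(X)_{e}$ directly from the fiber geometry. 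The group $W(X)_{e}$ is generated by the reflections $s_\delta$ in $(-2)$-roots $\delta$ with $\langle\delta,e\rangle=0$. By Riemann--Roch on a K3 one of $\pm\delta$ is effective, and since the fiber class $e$ is nef with $\langle\delta,e\rangle=0$ every prime component of that effective class is vertical; hence $\delta$ lies in $\mathbb{Z}e\oplus\bigoplus_v\langle\text{irreducible components of }F_v\rangle$. Consequently $W(X)_{e}$ is the affine Coxeter group whose simple reflections are the reflections in the fiber components: the $m_v$ irreducible components of each singular fiber form the extended Dynkin diagram of rank $m_v-1$ (Kodaira's classification), and components of distinct fibers are orthogonal, so $W(X)_{e}\cong\prod_v\widetilde{W}_v$ is virtually $\mathbb{Z}^{\sum_v(m_v-1)}$ and $\operatorname{vcd}W(X)_{e}=\sum_v(m_v-1)$.

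Finally I would assemble the two computations through an exact sequence. Using the splitting $O^{+}(NS(X))=W(X)\rtimes\operatorname{Aut}(A(X))$ and the rigidity of the fundamental domain $\overline{A_X}$ --- by \cite[Theorem 6.6.4]{Ratcliffe}, as already exploited in Lemma \ref{generalized polytope2}, two $W(X)$-equivalent points of $\overline{A_X}$ coincide, so distinct cusps of $\overline{A_X}$ lie in distinct orbits --- the projection $O^{+}(NS(X))\to\operatorname{Aut}(A(X))$ restricts to a short exact sequence $1\to W(X)_{e}\to O^{+}(NS(X))_{e}\to\operatorname{Aut}(A(X))_{e}\to 1$. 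All three groups are virtually abelian, so virtual cohomological dimension equals Hirsch length and is additive in extensions; hence $\operatorname{vcd}\operatorname{Aut}(A(X))_{e}=(\rho(X)-2)-\sum_v(m_v-1)=r(f_e)$ by the Shioda--Tate formula (Lemma \ref{9}). Combined with the two reductions, this yields both asserted equalities.

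The main obstacle I anticipate is the exactness step: one must verify that the image of $O^{+}(NS(X))_{e}$ under the projection is precisely $\operatorname{Aut}(A(X))_{e}$, rather than a larger group of chamber symmetries $\gamma$ that fix $e$ only after composing with some $w\in W(X)$. This is exactly where the fundamental-domain rigidity of $\overline{A_X}$ must be invoked, since $\gamma(e)$ and $e$ are both cusps of $\overline{A_X}$ and any relation $w\,\gamma(e)=e$ then forces $w\in W(X)_{e}$ and $\gamma(e)=e$. The secondary point needing care is Step two, namely that finite covolume guarantees the cusp stabilizer is crystallographic of full rank $n-1$; the remaining ingredients --- Riemann--Roch effectivity, Kodaira's fiber classification, and additivity of Hirsch length --- are routine.
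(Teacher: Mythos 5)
Your proposal is correct in substance, but it is organized genuinely differently from the paper's proof, even though both ultimately rest on the same two inputs: the Shioda--Tate formula and the fact that the cusp stabilizer $O^{+}(NS(X))_{e}$ is virtually $\mathbb{Z}^{n-1}$ with $n-1=\rho(X)-2$. The paper runs a squeeze: it only establishes the subgroup inclusions $\bigoplus_{v} W_{v} \subset W(X)_{e}$ and $\operatorname{MW}(f_{e}) \subset \operatorname{Aut}(X)_{e}$, giving $\sum_{v}(m_{v}-1) \leq l$ and $\operatorname{rk}\operatorname{MW}(f_{e}) \leq m$ for the virtual ranks $l,m$ of the two stabilizers, and then forces equality from $\rho(X)-2 = \sum_{v}(m_{v}-1) + \operatorname{rk}\operatorname{MW}(f_{e}) \leq l+m \leq \rho(X)-2$, never identifying either stabilizer exactly. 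You instead compute $W(X)_{e}$ precisely as a product of affine Weyl groups (via Riemann--Roch effectivity and verticality of roots orthogonal to the nef class $e$), upgrade the semidirect decomposition $O^{+}(NS(X)) = W(X) \rtimes \operatorname{Aut}(A(X))$ to a short exact sequence of cusp stabilizers, and finish with additivity of Hirsch length; this buys more than the paper's squeeze (exact identification of both groups, and of $\operatorname{Aut}(A(X))_{e}$ as the quotient), at the price of exactly the point you flagged. On that point, one citation needs repair: \cite[Theorem 6.6.4]{Ratcliffe}, as used in Lemma \ref{generalized polytope2}, concerns points of $\mathbb{H}^{n}_{X}$, not ideal points, so it does not literally cover the relation $w\gamma(e)=e$ at the cusp; you should instead invoke the standard fact that each $W(X)$-orbit of an integral class in $\overline{P^{+}}(X)$ contains exactly one nef class (equivalently, the Bourbaki--Vinberg theorem that the closed fundamental chamber is a strict fundamental domain on the Tits cone, which simultaneously supplies your unproved assertion that $W(X)_{e}$ is generated by the reflections $s_{\delta}$ with $\langle \delta, e\rangle = 0$). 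With those standard references supplied, your remaining ingredients --- full-rank cusp stabilizers for finite-covolume lattices, $\operatorname{vcd} = $ Hirsch length for finitely generated virtually abelian groups, and the reduction through Theorem \ref{aut1} --- are all sound, and the two asserted equalities follow as you describe.
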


\begin{proof}
Let $v$ be a point in $\mathbb{P}^{1},$
and let $m_{v}$ be the number of connected components of the fiber $f^{-1}(v)$.
By Shioda-Tate formula, 
$$ \rho(X)-2 = 
\operatorname{vcd}{MW(f_{e})} + \sum_{v\in \mathbb{P}^{1}}{(m_{v}-1)}.$$

If $m_{v}\neq 1$,
we denote by $W_{v}$ the Weyl group generated by the connected components of $f^{-1}(v)$,
which is of type $\tilde{A}$, $\tilde{D}$, or $\tilde{E}.$
Since $W_{v}$ fixes the elliptic divisor $e$, we have $\oplus W_{v} \subset W(X)_{e}.$
Here, for each $v$ with $m_{v}\neq1$, the Weyl group $W_{v}$ acts on the $(m_{v}-1)$-dimensional
Euclidean space, and admits
an $(m_{v}-1)$-dimensional simplex $\Delta_{v}$ as a fundamental domain for the action of $W_{v}.$
Hence $\oplus W_{v}$ acts on the $(\sum_{v\in \mathbb{P}^{1}}{(m_{v}-1})\big)$-dimensional Euclidean space,
and has the fundamental domain $\prod_{v}{\Delta_{v}}$.
Then we have
$$ \sum_{v \in \mathbb{P}^{1}}(m_{v}-1)
= \operatorname{vcd}(\oplus W_{v})
\leq \operatorname{vcd}W(X)_{e}.$$
Since the Mordell-Weil group of $f_{e}$ is a subgroup of $\operatorname{Aut}(X)_{e}$,
then 
$\operatorname{rk} \operatorname{MW}f_{e} \leq \operatorname{vcd}\operatorname{Aut}(X)_{e}$, 
where $\operatorname{rk} \operatorname{MW} f_{e}$ is the rank of Mordell-Weil group of $f_{e}$.

Since both $W(X)_{e}$ and $\operatorname{Aut}(X)_{e}$ fix the elliptic divisor $e$, which is a
cusped limit points of $O^{+}(NS(X))$, these two groups are elementary of 
either elliptic or parabolic type. Therefore,
there exists an integer $l$ (resp. $m$) such that $W(X)_{e}$ (resp. $\operatorname{Aut}(X)_{e}$)
has a subgroup that is isomoprhic to $\mathbb{Z}^{l}$ (resp. $\mathbb{Z}^{m}$) 
of finite index in $W(X)_{e}$ (resp. $\operatorname{Aut}(X)_{e}$).
Hence we have 
\begin{align}
\sum_{v\in \mathbb{P}^{1}}{(m_{v}-1)} \leq l, \\
\operatorname{rk} \operatorname{MW} f_{e} \leq m,
\end{align}
so that $\rho (X)-2 \leq l+m.$
On the other hand, $O^{+}(NS(X))_{e}$ has a subgroup that is isomorphic to $\mathbb{Z}^{l+m}.$
Then $l+m = \rho(X)-2.$ The inequality 6.2.1 and 6.2.2 imply that

\begin{align*}
l = \sum_{v \in \mathbb{P}^{1}}(m_{v}-1), \\
m = \operatorname{rk} \operatorname{MW} f_{e}.
\end{align*}
These are namely the desired equalities.
\end{proof}

By Lemma \ref{the rank of MW},
we have
$C_{X} 
= \{c \in V : \operatorname{vcd}\operatorname{Aut}(X)_{c}  >1 \}
= \{c \in V : \operatorname{vcd}\operatorname{Aut}(A(X))_{c}  >1 \}.$ 
Then each element of $C_{X}$ is fixed by some parabolic transformation of 
$\operatorname{Aut}(A(X))_{c}.$

As noted in the introduction, 
the following theorem is also found by Kohei Kikuta at about the same time
(see the introduction for details).

\begin{thm}\label{geometrically finiteness}
Let $X$ be a K3 surface,
and let $\operatorname{Aut}_{s}(X)$ be the symplectic automorphism group of $X$.
Then $\operatorname{Aut}_{s}(X)$ is geometrically finite.
\end{thm}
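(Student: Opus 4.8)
The plan is to verify Definition \ref{hyperbolic_geometry7} directly: I must produce a finite collection of pairwise-disjoint proper horocusps covering the non-compact ends of the convex core $C(M)$, where $M = \mathbb{H}^{n}_{X}/\operatorname{Aut}_{s}(X)$. Since $\operatorname{Aut}_{s}(X)$ has finite index in $\operatorname{Aut}(A(X))$ by the discussion preceding Proposition \ref{ample cone vs Dirichlet domain}, and geometric finiteness is preserved under commensurability, it suffices to establish the statement for $\operatorname{Aut}(A(X))$. First I would dispose of the elementary case, where the claim is immediate because the convex core is already compact (elliptic type) or reduces to a single cusp (parabolic type). So I assume $\operatorname{Aut}(A(X))$ is non-elementary.

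Next I would set up the geometry using the machinery already built in Section \ref{main result}. Let $P$ be the Dirichlet domain of $\mathbb{H}^{n}_{X}$ for $O^{+}(NS(X))$ centered at a point $a \in A_{X}$; by Lemma \ref{generalized polytope} this is a generalized polytope with finitely many ideal vertices, namely the set $I_{P}$. By Lemma \ref{generalized polytope2} the translates $\operatorname{Aut}(A(X))\cdot \overline{P}$ tessellate $\overline{A_{X}}$, so the cusps of $M$ correspond to $\operatorname{Aut}(A(X))$-orbits in $V = \operatorname{Aut}(A(X))\cdot I_{P}$, of which there are finitely many because $I_{P}$ is finite. The crucial dichotomy is encoded in the set $C_{X} \subset V$: by the reformulation following Lemma \ref{the rank of MW}, $C_{X}$ consists exactly of those cusps $c$ with $\operatorname{vcd}\operatorname{Aut}(A(X))_{c} > 1$, i.e. the cusps carrying a genuine (rank $\geq 2$) parabolic stabilizer, whereas points of $V\backslash C_{X}$ are the degenerate vertices. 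For $c \in V\backslash C_{X}$, Lemma \ref{horoballs} shows that $c$ is isolated in $\partial A_{X}$ and that $B(X)$ — which equals $C(X)\cap \mathbb{H}^{n}_{X}$, the convex hull of the limit set intersected with the hyperbolic space — stays a positive distance from $c$; such points therefore contribute nothing to the non-compact part of the convex core and can be ignored.

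The heart of the argument is then to attach a horocusp to each $c \in C_{X}$ and check the three bookkeeping conditions of geometric finiteness. For this I would invoke Theorem \ref{Ratcliffe_thm_12.6.5}: applied to the cusped limit points $C$ of $\operatorname{Aut}(A(X))$, it produces cusped regions $U(c)$ that are (1) mutually disjoint on closures, (2) equivariant, $\gamma U(c) = U(\gamma c)$, and (3) locally finite as a family in $\mathbb{H}^{n}_{X}\cup O(\operatorname{Aut}(A(X)))$. Passing these regions to the quotient $M$, equivariance (2) ensures each orbit descends to a single horocusp, finiteness of $C_{X}/\operatorname{Aut}(A(X))$ gives a \emph{finite} union $V$ of horocusps, and disjointness (1) guarantees their closures are disjoint. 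It then remains to show that $C(M)\backslash V$ is compact, for which I would argue that once the finitely many genuine cusp neighborhoods are excised, the truncated convex core is covered by finitely many translates of the compact truncation $\overline{P}$-minus-neighborhoods-of-$I_{P}$ (using Lemma \ref{generalized polytope2} together with the local finiteness of the tessellation and condition (3)).

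The main obstacle I anticipate is precisely this last compactness step: showing that after removing the horocusps over $C_{X}$, nothing escapes to infinity. The subtlety is that $\partial A_{X}$ may contain limit points accumulating in complicated ways, and I must rule out any non-compact end of $C(M)$ not accounted for by a cusp in $C_{X}$. The degenerate ideal vertices in $V\backslash C_{X}$ are handled by Lemma \ref{horoballs} (they are isolated and bounded away from $B(X)$), and the genuinely parabolic points in $C_{X}$ are handled by the horocusps; the work is to confirm that these two cases exhaust the ways the convex core can fail to be compact. I expect this to follow from the finite-covolume property of $O^{+}(NS(X))$ in Lemma \ref{generalized polytope} combined with the local finiteness in Theorem \ref{Ratcliffe_thm_12.6.5}(3), but stitching the estimates together — controlling the convex core $B(X)$ of the \emph{limit set} $\Lambda_{X}$ rather than the whole fundamental domain — is where the care is needed, since $\Lambda_{X}$ is generally a proper subset of $\partial\mathbb{H}^{n}_{X}$ and $\operatorname{Aut}(A(X))$ need not be of the first kind.
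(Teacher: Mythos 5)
Your proposal follows essentially the same route as the paper's proof: reduce to $\operatorname{Aut}(A(X))$ by finite index, use the Dirichlet domain $P$ for $O^{+}(NS(X))$ and the tessellation of $\overline{A_{X}}$ from Lemma \ref{generalized polytope2}, discard the degenerate vertices $V\backslash C_{X}$ via Lemma \ref{horoballs}, attach the equivariant disjoint horoball family from Theorem \ref{Ratcliffe_thm_12.6.5} to the remaining cusps, and obtain compactness of the truncated convex core from the fact that $\overline{P}$ minus horoball neighborhoods of its finitely many ideal vertices is compact. The compactness step you flag as the main obstacle is resolved in the paper exactly as you predict, by the inclusion $B(X)\backslash \bigcup_{c\in C_{X}} U(c) \subset \bigcup_{g\in \operatorname{Aut}(A(X))} g\bigl(\overline{P}\backslash \bigcup_{c\in V} U(c)\bigr)$, whose image in the quotient is the compact set $\pi\bigl(\overline{P}\backslash \bigcup_{c\in V} U(c)\bigr)$.
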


\begin{proof}
By Theorem \ref{aut1},
the symplectic automorphism group $\operatorname{Aut}_{s}(X)$
is a subgroup of finite index in $\operatorname{Aut}(A(X)).$
Then it is enough to show that $\operatorname{Aut}(A(X))$ is geometrically finite.
Recall the notation of $P$, $V$, $V_{NS}$, and $C_{X}$
in Definition \ref{the sets of bounded parabolic points}

By Theorem \ref{Ratcliffe_thm_12.6.5},
there exist proper horoballs at $c \in V_{NS}$ such that

\begin{enumerate}
	\item $\{ \overline{U(c)} : c \in V_{NS}\}$ are mutually disjoint,
	\item $gU(c) = U(gc)$ for any element $g \in O^{+}(NS(X))$,
	\item $\{\overline{U(c)}\backslash {c} \} $ is locally finite in $\mathbb{H}^{n}_{X}$.
\end{enumerate}
By Lemma \ref{horoballs}, shrinking the horoballs $U(c)$ sufficiently small,
we obtain that 
$$ B(X)\backslash \bigcup_{c\in C_{X}} U(c) 
= B(X)\backslash \bigcup_{c\in V} U(c).$$

By applying Lemma \ref{generalized polytope2},
\begin{align*}
B(X)\backslash \bigcup_{c\in C_{X}} U(c) 
&\subset  \hspace{1mm}\overline{A_{X}}^{H}\backslash \bigcup_{c\in V} U(c) \\
&= \bigl( \bigcup_{g\in \operatorname{Aut}(A(X))}\hspace{-1mm}g\overline{P}\hspace{1mm}\bigr)
\big\backslash \bigcup_{c\in V} U(c) \\
&= \bigcup_{g\in \operatorname{Aut} (A(X))} \big(g\overline{P}
\hspace{1mm} 
\backslash \bigcup_{c\in V} U(c) \big)\\
&=  \bigcup_{g \in  \operatorname{Aut}(A(X))}g(\overline{P}\backslash \bigcup_{c \in V} U(c)).
\end{align*}
Hence 
we have
\begin{equation}\label{convexcore}
B(X)\backslash \bigcup_{c\in C_{X}} U(c) 
\subset 
\bigcup_{g \in  \operatorname{Aut}(A(X))}g(\overline{P}\backslash \bigcup_{c \in V} U(c)).
\end{equation}

Let $\pi:\mathbb{H}^{n}_{X} \rightarrow \mathbb{H}^{n}_{X}/\hspace{-1mm}\operatorname{Aut}(A(X))$
be the natural quotient map.
We define 
$V_{c} = U(c)/\hspace{-1mm}\operatorname{Aut}(A(X)).$
By the equality \ref{convexcore}, we obtain
$$ (B(X)/\operatorname{Aut}(A(X)))\backslash \bigcup_{c\in I_{P}\cap C_{X}}V_{c} 
\subset \pi(\overline{P}\backslash \bigcup_{c\in V} U(c)).$$
Then 
$(B(X)/\operatorname{Aut}(A(X)))\backslash \bigcup_{c\in I_{P}\cap C_{X}}V_{c}$
 is compact and $\{ V_{c}\}_{c\in I_{P}\cap C_{X}}$
is a set of finitely many horocusps with disjoint closures.
By Theorem \ref{hyperbolic_geometry7}, $\operatorname{Aut}(A(X))$ is geometrically finite.
\end{proof}

\begin{prop}\label{horopoints of Ax}
Let $p$ be a cusped limit point of $\operatorname{Aut}(A(X)).$
Then $p$ is a horopoint of $\overline{A_{X}},$
i.e.,
there exists a horoball $U$ based at $p$ such that $U$ 
meets just the sides of $\overline{A_{X}}$ incident with $p$.
\end{prop}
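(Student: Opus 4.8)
The plan is to work intrinsically inside the hyperboloid model $\mathbb{H}^{n}_{X}\subset NS(X)\otimes\mathbb{R}$ and to realise the horoballs based at $p$ as sublevel sets of a single linear functional. Since $\operatorname{Aut}(A(X))$ is of finite index in (hence commensurable with) the arithmetic group $O^{+}(NS(X))$, the two groups share the same limit set and the same cusped limit points, and each such point is fixed by a parabolic element of $O(NS(X))$. The fixed isotropic direction of such a parabolic integral isometry is rational — it is the radical of the rational quadratic form restricted to $\ker(\gamma-\operatorname{id})$ — so I may represent $p$ by a primitive isotropic vector $\bar p\in NS(X)$; concretely, $p$ is an elliptic divisor class. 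The horoballs based at $p$ are then exactly the sets $U_{s}=\{x\in\mathbb{H}^{n}_{X}:\langle x,\bar p\rangle<s\}$ for small $s>0$, because the sections of $\mathbb{H}^{n}_{X}$ by the affine hyperplanes $\{\langle\,\cdot\,,\bar p\rangle=s\}$ are horospheres centred at $[\bar p]$ (one sees this at once by passing to the upper half-space model with $p=\infty$, where $\langle x,\bar p\rangle$ is proportional to $1/x_{n}$).

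Next I would identify the sides of $\overline{A_{X}}$ incident with $p$. By Proposition \ref{ample cone vs Dirichlet domain} the sides of $A_{X}$ lie on the walls $P_{\delta}=\{x:\langle x,\delta\rangle=0\}$ with $\delta$ a $(-2)$-root, and $\overline{A_{X}}\cap P_{\delta}$ is the corresponding supporting face. Since $p\in\partial A_{X}\subset\overline{A_{X}}$, a wall contains $p$ in the closure of its side — equivalently that side is incident with $p$ — precisely when $\langle\bar p,\delta\rangle=0$; for then $p\in\overline{A_{X}}\cap P_{\delta}=\overline{S_{\delta}}$.

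The analytic heart of the argument is a one-line optimisation: minimising $\langle\,\cdot\,,\bar p\rangle$ over $P_{\delta}\cap\mathbb{H}^{n}_{X}$ by Lagrange multipliers, with the constraints $\langle x,x\rangle=1$ and $\langle x,\delta\rangle=0$ and using $\langle\delta,\delta\rangle=-2$, yields the value $|\langle\bar p,\delta\rangle|/\sqrt{2}$. Here the integrality of the lattice pairing is decisive: if $\langle\bar p,\delta\rangle\neq0$ then $|\langle\bar p,\delta\rangle|\geq1$, so the functional is bounded below by $1/\sqrt{2}$ on every wall not passing through $p$. Hence for any $s<1/\sqrt{2}$ the horoball $U_{s}$ is disjoint from every such wall, and therefore from every side of $\overline{A_{X}}$ not incident with $p$. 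Taking $U=U_{s}$ with, say, $s=1/2$ produces the required horoball: by the contrapositive of the estimate, any side of $\overline{A_{X}}$ that $U$ meets lies on a wall with $\langle\bar p,\delta\rangle=0$ and is thus incident with $p$.

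The step I expect to require the most care is the rationality of $p$. One must be sure that a cusped (bounded parabolic) limit point of $\operatorname{Aut}(A(X))$ is represented by a \emph{primitive isotropic lattice vector} of $NS(X)$, since this is exactly what makes the integrality bound $|\langle\bar p,\delta\rangle|\geq1$ available; this is where arithmeticity of $O^{+}(NS(X))$ enters, and in our setting it amounts to the identification of cusped limit points with elliptic divisor classes. Once $p$ is known to be such a rational boundary point, the optimisation, the integrality inequality, and the choice of horoball are all routine.
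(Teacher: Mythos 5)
Your overall strategy is sound and genuinely different from the paper's. The paper conjugates $p$ to $\infty$ in the upper half-space model and quotes the construction in the proof of \cite[Theorem 12.4.4]{Ratcliffe} for the Dirichlet polyhedron $P$ of the finite-covolume group $O^{+}(NS(X))$: one gets a horoball $\Sigma$ based at $\infty$ meeting only the vertical sides of the translates of $P$, and this is then transferred to the sides of $\overline{A_{X}}$ using the tessellation $\operatorname{Aut}(A(X))\cdot\overline{P}=\overline{A_{X}}^{H}$ of Lemma \ref{generalized polytope2} together with \cite[Theorem 6.3.8]{Ratcliffe}. You instead exploit the integral structure: you represent $p$ by a primitive isotropic vector $\bar p\in NS(X)$ (your radical argument for rationality is correct, and you rightly flag this as the crux --- a fixed vector of a parabolic $\gamma$ pairing nontrivially with $\bar p$ would span with $\bar p$ a pointwise-fixed Lorentzian plane, forcing $\gamma$ elliptic, so the fixed isotropic line is indeed the radical of the rational form on $\ker(\gamma-\operatorname{id})$), realize the horoballs at $p$ as the sublevel sets $\{\langle x,\bar p\rangle<s\}$, and bound $\langle\,\cdot\,,\bar p\rangle$ below on each wall $P_{\delta}=\{x:\langle x,\delta\rangle=0\}$ by $|\langle\bar p,\delta\rangle|/\sqrt{2}$. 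The Lagrange computation checks out, and the critical value is a genuine minimum because $\langle\,\cdot\,,\bar p\rangle$ is proper on any wall whose ideal boundary misses $[\bar p]$. With integrality this yields the explicit horoball $s=1/2$, bypassing the geometric-finiteness machinery entirely; this is cleaner and quantitatively stronger than the paper's route.

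There is, however, one under-justified step. From ``$U_{1/2}$ meets only walls with $\langle\bar p,\delta\rangle=0$'' you conclude that every side $U_{1/2}$ meets is incident with $p$, via the asserted equality $\overline{A_{X}}\cap\overline{P_{\delta}}=\overline{S_{\delta}}$. But the inclusion you need there is ``intersection of closures contained in closure of the intersection,'' which fails for general convex sets: a face can lie on a wall passing through an ideal point while sitting at bounded height (in the upper half-space picture, a compact piece of a vertical wall), in which case the ideal point is not in the face's closure. So ``$p\in\overline{A_{X}}$ and $\langle\bar p,\delta\rangle=0$'' does not by itself put $p$ in the closure of the side. The gap closes in one line using exactly what is special in this situation: since $p\in\partial A_{X}$, the vector $\bar p$ is nef, and the nef cone is convex; hence for any $x$ in the side $S_{\delta}$ the normalized ray $[x+t\bar p]$ stays in $\overline{A_{X}}^{H}\cap P_{\delta}=S_{\delta}$ (both $x$ and $\bar p$ pair to zero with $\delta$) and converges to $p$ as $t\to\infty$. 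This shows any such side is incident with $p$, and moreover that your horoball meets \emph{every} side incident with $p$, so the conclusion holds in the exact sense of ``meets just the sides incident with $p$.'' With this line added, your proof is complete.
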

\begin{proof}
We now pass to the upper half-space model $U^n$ 
and conjugate $\operatorname{Aut}(A(X))$ so that $p=\infty.$
Let $P$ be a Dirichlet polyhedron $P$, centerd at $a\in A_{X}$,
for $O^{+}(NS(X)).$
From the proof of \cite[Theorem 12.4.4]{Ratcliffe},
there exist a horoball $\Sigma$ based at $\infty$
and elements $g_{1}, \cdots, g_{k}$ in $O^{+}(NS(X))$
such that 
$$\Sigma \subset 
\bigcup_{f\in \operatorname{Aut}(A(X))_{\infty}}(\bigcup_{i=1}^{k}g_{i}P)$$
and $\Sigma$ meets just the vertical sides of $fg_{i}P$.

Let $S$ be a side of $A_{X}$ with $S \cap \Sigma \neq \varnothing.$
There exists an element $x\in S$ and $r>0$ such that
$B(x, r)\cap S $ is homeomorphic to $B^{n-1}.$
Since $P$ is locally finite, 
shrinking $r$,
there exists $g_{1}, \cdots, g_{k} \in O^{+}(NS(X))$ such that $x\in g_{i}\overline{P}$
and 
$B(x, r) \subset \bigcup_{i=1}^{k}g_{i}\overline{P}.$
In particular,
we have 
$$B^{n-1} \cong B(x, r)\cap S
\subset
\bigcup \big\{ g_{i}T: g_{i}\in \operatorname{Aut}(A(X)),
T {\text{ is a side of }} \overline{P},
{\text{and }} x\in gT
\big\}.$$
Hence there exists a side $T$ of $g\overline{P}$ such that 
$T^{\circ} \cap S\neq \varnothing.$
By \cite[Theorem 6.3.8]{Ratcliffe}, we have $T\subset S.$
By $x\in T\cap \Sigma$, then $T$ is a vertical side of $g\overline{P}.$
Therefore, we have $\infty \in T \subset S$.
\end{proof}


There is a natural one-to-one correspondence from the set of maximal parabolic subgroups
of $\operatorname{Aut}_{s}(X)$ to the rational ray of $\partial A(X)$.
By multiplying a positive integer, we have a primitive element in $NS(X)$.
From this argument and Lemma \ref{the rank of MW},
we have the following proposition:
\begin{prop}\label{P_vs_MW}
Let $P$ be a maximal parabolic subgroup of $\operatorname{Aut}_{s}(X).$
Then there exists an element $w \in W(X)$ such that the fixed point of $wPw^{-1}$
is an elliptic divisor $e$. Moreover, the rank of $P$ is equal to the rank of
the Mordell-Weil group of the elliptic fibration $f_{e}$ associated with $e$.
\end{prop}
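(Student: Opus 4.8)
The plan is to establish the bijection between maximal parabolic subgroups of $\operatorname{Aut}_s(X)$ and rational rays of $\partial A(X)$, and then identify the rank of such a parabolic subgroup with the Mordell-Weil rank via the machinery already developed. First I would recall that $\operatorname{Aut}_s(X)$ is geometrically finite by Theorem \ref{geometrically finiteness}, so its parabolic fixed points are exactly the cusped (bounded parabolic) limit points of $\operatorname{Aut}(A(X))$. A maximal parabolic subgroup $P$ stabilizes a unique point $p \in \partial \mathbb{H}^n_X$, which corresponds to an isotropic ray in $\overline{P^+}(X)$. The key point is that $p$ must be a rational ray: since $P$ is a parabolic subgroup of the arithmetic group $\operatorname{Aut}(A(X)) \subset O^+(NS(X))$ with $\operatorname{vcd} P \geq 1$, its fixed point is a cusp of the finite-covolume action of $O^+(NS(X))$ (Lemma \ref{generalized polytope}), and cusps of such arithmetic reflection groups correspond to rational isotropic rays; clearing denominators by multiplying by a suitable positive integer yields a primitive isotropic vector in $NS(X)$.

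Next I would invoke the classical correspondence between primitive isotropic classes in $NS(X)$ and elliptic fibrations. A primitive isotropic vector $e$ (after possibly applying a Weyl group element to move it into the closure of the chosen fundamental chamber) is represented, up to the action of $W(X)$, by an effective elliptic divisor; this is exactly the content accessed in Lemma \ref{the rank of MW}, where elliptic divisors $e$ give rise to elliptic fibrations $f_e$. Concretely, I would argue: the fixed point $p$ of $P$ determines a rational ray $\mathbb{R}_{>0}\cdot v$ with $v$ primitive isotropic; there exists $w \in W(X)$ carrying $v$ into $\overline{A_X}$, so that $wPw^{-1}$ fixes a point lying on the boundary of the ample cone, which is precisely an elliptic divisor $e = w\cdot v$. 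This gives the first assertion: the conjugate $wPw^{-1}$ has fixed point an elliptic divisor.

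For the rank statement, I would connect $P$ to $\operatorname{Aut}(X)_e$ and then apply Lemma \ref{the rank of MW}. Since $wPw^{-1}$ fixes the elliptic divisor $e$, it is a subgroup of $\operatorname{Aut}(A(X))_e$ of parabolic type; because $P$ is maximal parabolic, $wPw^{-1}$ is (up to finite index) the full parabolic part of the stabilizer, whose virtual cohomological dimension equals the rank of $\mathbb{Z}^m$ appearing in the proof of Lemma \ref{the rank of MW}. Conjugation by $w$ and the identification of $\operatorname{Aut}_s(X)$ as a finite-index subgroup of $\operatorname{Aut}(A(X))$ preserve the rank. Lemma \ref{the rank of MW} gives $\operatorname{vcd}\operatorname{Aut}(X)_e = \operatorname{rk}\operatorname{MW}(f_e)$, and since the rank of a virtually abelian parabolic group equals its vcd, we conclude $\operatorname{rk} P = \operatorname{rk}\operatorname{MW}(f_e)$.

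The main obstacle will be the rationality of the parabolic fixed point together with controlling the maximality/index issues when transferring ranks between $\operatorname{Aut}_s(X)$, $\operatorname{Aut}(A(X))$, and $O^+(NS(X))$ under conjugation by $w \in W(X)$. One must be careful that $w$ may not lie in $\operatorname{Aut}_s(X)$, so the conjugate $wPw^{-1}$ need not be a subgroup of $\operatorname{Aut}_s(X)$; the rank equality is nonetheless preserved because it is an invariant of the abstract group and of the virtually-abelian parabolic structure. Verifying that maximality of $P$ exactly matches the full $\mathbb{Z}^m$-factor (and not a proper subgroup of it) is the delicate point, and this is where the finite-index relationship from Theorem \ref{aut1} and the analysis of the stabilizer as an elementary group of parabolic type must be combined.
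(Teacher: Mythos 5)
Your proposal is correct and follows essentially the same route as the paper: the paper's own (very brief) justification is exactly the correspondence between maximal parabolic subgroups of $\operatorname{Aut}_{s}(X)$ and rational rays of $\partial A(X)$, passage to a primitive isotropic element of $NS(X)$ by scaling, and an appeal to Lemma \ref{the rank of MW}. Your write-up simply fills in details the paper leaves implicit (rationality of the parabolic fixed point, moving the class by $W(X)$ into the nef cone, and the finite-index transfers of rank via Theorem \ref{aut1}).
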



\subsection{The blown-up boundary of ample cones of K3 surfaces}

The notion of the blown-up coronae for relatively hyperbolic groups 
(a generalization of geometrically finite groups)
is defined in the paper of Fukaya-Oguni \cite{Fukaya-Oguni},
and Fukaya \cite{Fukaya} established Bestvina-Mess type formula for relatively hyperbolic groups.
We can immediately obtain Theorem 1 by applying Fukaya's result \cite[Corollary B]{Fukaya}.

For readability to algebraic geometers,
in this section, 
we give a simplified version of Fukaya's result,
using the geometry of ample cones of K3 surfaces and hyperbolic geometry. 
Note that the proof method and some definitions in this section are not itself new.

\begin{defi}\label{blowing-up at a point}
Let $U$ be an open subset of $\mathbb{R}^{n}$,
and let $p$ be a point in $U$.
We denote the $(n-1)$-dimensional unit sphere in $\mathbb{R}^{n}$ by $S^{n-1}$.
We put 
$$\operatorname{Bl}_{p}(U)
= \{ (x, t) \in U\times S^{n-1}: |x-p|t = (x-p) \},$$
where $| \cdot |$ is the Eulidean norm.
We define a function $\pi_{p}: \operatorname{Bl}_{p}(U) \rightarrow U$ by setting 
$\pi_{p}(x, t) = x.$
We call the function $\pi_{p}$ the {\it{blowing-up of $U$ at a point $p$ (to $S^{n-1}$).}}
For a subset $A$ of $U$, the closure of  $\pi_{p}^{-1}(A\backslash \{p\})$
in $\mathbb{R}^{n}$
is called
the {\it{strict transform of $A$ at a point $p$}},
and is denoted by $\operatorname{St}_{p}(A)$.

\end{defi}
Since $\pi_{p}$ restricts to a homeomorphism
from $\mathrm{Bl}_{p}(U)\backslash \pi^{-1}_{p}(p)$ to $U\backslash \{p\},$
we identify these two spaces.
\begin{defi}\label{blowing-up at points}
Let $\mathcal{P}=\{p_{1}, \dots, p_{m}\}$ be a subset of $U$.
For each $p\in \mathcal{P}$, we take an open neighborhood $U_{p}$ of $p$ 
such that $U_{p} \cap \mathcal{P} = \{p\}.$
Let $\pi_{p_{i}}: \operatorname{Bl}_{p_{i}}(U_{p_{i}}) \rightarrow U_{p_{i}}$
be the blowing-up of $U_{p_{i}}$ at a point $p_{i}$.
We define $\mathrm{Bl}_{\mathcal{P}}(U)$ by 
$$\mathrm{Bl}_{\mathcal{P}}(U)
= (U\backslash \mathcal{P}) 
\amalg 
({\coprod_{p\in \mathcal{P}} \mathrm{Bl}_{p}(U_{p})})/\sim,$$
where the equivalence relation $\sim$ is generated by $x\sim \pi_{p_{i}}(x)$
for all
$x \in \coprod_{p \in \mathcal{P}}
\pi_{p}^{-1}(U_{p}\backslash \lbrace p\rbrace).$

We denote by $\lbrack x \rbrack$ the class in
$\mathrm{Bl}_{\mathcal{P}}(U)$ corresponding to $x$.
We define the function $\pi_{\mathcal{P}}: \operatorname{Bl}_{\mathcal{P}}(U) \rightarrow U$
by setting 
$$ \pi_{\mathcal{P}}(\lbrack x \rbrack) = 
\left\{
\begin{array}{ll}
x  &  (x \in U\backslash \mathcal{P}) \\
\pi_{p}(x) & ( x \in \operatorname{Bl}_{p}(U_{p}), p\in \mathcal{P})
\end{array}
\right.$$
We call the function $\pi_{\mathcal{P}}$ the 
{\it{blowing-up of $U$ at a subset $\mathcal{P}$}}.
For a subset $A$ of $U$, the closure of  
$\pi_{\mathcal{P}}^{-1}(A\backslash \mathcal{P})$ in $\mathbb{R}^{n}$
is called
the {\it{strict transform of $A$ at a subset $\mathcal{P}$}},
and is denoted by $\operatorname{St}_{\mathcal{P}}(A)$.
\end{defi}

\begin{defi}\label{blowing-up at bounded parabolic points}
Let $\Gamma$ be a geometrically finite subgroup of $M(B^{n})$,
and let $C$ be the set of cusped limit points of $\Gamma$.
For finite subsets $\mathcal{P}$ and $\mathcal{Q}$ of $C$ 
with $\mathcal{P}\subset \mathcal{Q}$,
there exists the narural projection 
$\pi_{\mathcal{P}, \mathcal{Q}}:
\operatorname{Bl}_{\mathcal{Q}}(\mathbb{R}^{n})\rightarrow
\operatorname{Bl}_{\mathcal{P}}(\mathbb{R}^{n})$
such that $\pi_{\mathcal{P}}\circ \pi_{\mathcal{P}, \mathcal{Q}} = \pi_{\mathcal{Q}}.$

We denote the projective limit of
($\operatorname{Bl}_{\mathcal{P}}(\mathbb{R}^{n})$, $\pi_{\mathcal{P}, \mathcal{Q}}$)
by
$\pi_{C} = \varprojlim \pi_{\mathcal{P}}
: \operatorname{Bl}_{C}(\mathbb{R}^{n})\rightarrow  \mathbb{R}^{n}.$
We call the function $\pi_{C}$ the {\it{blowing-up of $\mathbb{R}^{n}$ at $C$}}.
For a subset $A$ of $\mathbb{R}^{n}$,
the closure of  
$\pi_{C}^{-1}(A\backslash \mathcal{P})$ in $\mathbb{R}^{n}$
is called
the {\it{strict transform of $A$ at $C$}},
and is denoted by $\operatorname{St}_{C}(A)$.
The strict transform of $\overline{B^{n}}$ (resp. $\partial B^{n}$) is denoted by
$\overline{B}^{bl}$ (resp. $\partial B^{bl}$).
\end{defi}

\begin{lem}
The action of a gemetrically finite subgroup $\Gamma$ of $M(B^{n})$ on $\mathbb{R}^{n}$
(resp. $B^{n}$, $\partial B^{n}$) is extended to an action on $\operatorname{Bl}_{C}(\mathbb{R}^{n})$
(resp. $\overline{B}^{bl}$, $\partial B^{bl}$).
\end{lem}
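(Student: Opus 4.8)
The plan is to produce, for each $g\in\Gamma$, a self-homeomorphism $\tilde g$ of $\operatorname{Bl}_C(\mathbb{R}^n)$ lying over the action of $g$ (that is, with $\pi_C\circ\tilde g=g\circ\pi_C$), and then to check that $g\mapsto\tilde g$ is a group homomorphism; the same formulas will give the lifts to $\overline{B}^{bl}$ and $\partial B^{bl}$ by restriction. Two structural facts drive everything. First, a geometrically finite group carries cusped limit points to cusped limit points, so $\Gamma$ preserves $C$ (this is already implicit in Theorem \ref{Ratcliffe_thm_12.6.5}(2)); hence each $g$ induces a bijection of $C$ and sends a finite subset $\mathcal{P}\subset C$ to a finite subset $g\mathcal{P}\subset C$. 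Second, since every element of $M(B^n)$ is a M\"obius transformation of $\hat{E}^n$ and each $c\in C\subset\partial B^n$ is a finite point whose image $gc\in C$ is again finite, $g$ restricts to a conformal $C^\infty$-diffeomorphism from a neighborhood of $c$ onto a neighborhood of $gc$; by conformality its differential is a similarity $dg_c=\lambda(c)\,O_g(c)$ with $\lambda(c)>0$ and $O_g(c)\in O(n)$, so that the normalized differential satisfies $dg_c(t)/|dg_c(t)|=O_g(c)t$ for every $t\in S^{n-1}$.

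Because $\operatorname{Bl}_C(\mathbb{R}^n)=\varprojlim_{\mathcal{P}}\operatorname{Bl}_{\mathcal{P}}(\mathbb{R}^n)$, it suffices to construct compatible lifts at each finite stage and pass to the limit. First I would define $\tilde g_{\mathcal{P}}\colon\operatorname{Bl}_{\mathcal{P}}(\mathbb{R}^n)\to\operatorname{Bl}_{g\mathcal{P}}(\mathbb{R}^n)$ by letting it equal $g$ on $\mathbb{R}^n\setminus\mathcal{P}$ (under the identification of the complement of the exceptional set with $U\setminus\mathcal{P}$), and on the exceptional sphere $\{c\}\times S^{n-1}$ over each $c\in\mathcal{P}$ by the fiberwise rule $(c,t)\mapsto(gc,\,O_g(c)t)$. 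Second I would verify continuity across each exceptional sphere: if $x\to c$ with $(x-c)/|x-c|\to t$, the $C^1$ expansion $g(x)-g(c)=dg_c(x-c)+o(|x-c|)$ gives $(g(x)-g(c))/|g(x)-g(c)|\to dg_c(t)/|dg_c(t)|=O_g(c)t$, which is exactly the matching condition between the two descriptions of $\tilde g_{\mathcal{P}}$; this is the standard statement that a diffeomorphism lifts to a homeomorphism of blow-ups. Third, the assignment is a homomorphism: $\pi_C\circ\tilde g=g\circ\pi_C$ holds by construction, and on the exceptional spheres the chain rule $d(gh)_c=dg_{hc}\,dh_c$ yields $O_{gh}(c)=O_g(hc)O_h(c)$, whence $\widetilde{gh}=\tilde g\,\tilde h$ and $\tilde{1}=\mathrm{id}$, so each $\tilde g$ is a homeomorphism with inverse $\widetilde{g^{-1}}$. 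Fourth, the lifts commute with the bonding maps $\pi_{\mathcal{P},\mathcal{Q}}$ (forgetting a blown-up point is compatible with the fiberwise rule), so they assemble into $\tilde g=\varprojlim_{\mathcal{P}}\tilde g_{\mathcal{P}}$ on $\operatorname{Bl}_C(\mathbb{R}^n)$. Finally, since $g(\overline{B^n})=\overline{B^n}$, $g(\partial B^n)=\partial B^n$ and $gC=C$, the relation $\pi_C\circ\tilde g=g\circ\pi_C$ gives $\tilde g\big(\pi_C^{-1}(A\setminus C)\big)=\pi_C^{-1}(A\setminus C)$ for $A=\overline{B^n}$ and $A=\partial B^n$; as $\tilde g$ is a homeomorphism it commutes with closure, so it preserves the strict transforms $\overline{B}^{bl}$ and $\partial B^{bl}$, and the action restricts to each of them.

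The step I expect to be the main obstacle is the continuity of $\tilde g$ across the exceptional spheres, namely the verification that the fiberwise rule $(c,t)\mapsto(gc,O_g(c)t)$ glues continuously onto $g$ on the complement; this is precisely where the conformality and $C^1$-regularity of M\"obius transformations at finite points are used, and where one must also check that $g$ carries the directions tangent to $\partial B^n$ at $c$ to those tangent to $\partial B^n$ at $gc$, so that the exceptional part of $\partial B^{bl}$ is respected. A secondary technical point is that an element of $M(B^n)$, viewed on all of $\mathbb{R}^n$, may send a point to $\infty$; this is harmless for the blow-up, since $\infty$ and $g^{-1}(\infty)$ lie in the exterior of $\overline{B^n}$ and hence are not in $C$, so the blow-up is unaffected there, and the construction is most naturally carried out for the $\Gamma$-action on $\hat{E}^n$ and then restricted.
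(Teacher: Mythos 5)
Your proof is correct and takes essentially the same route as the paper, whose own proof is only a two-sentence sketch: since $\Gamma$ preserves the set $C$ of cusped limit points, the action extends to $\operatorname{Bl}_{C}(\mathbb{R}^{n})$, and since $\Gamma$ preserves $\overline{B^{n}}$ and $\partial B^{n}$, it acts on $\overline{B}^{bl}$ and $\partial B^{bl}$. Your explicit lifts via the normalized differential $O_{g}(c)$, the continuity check across the exceptional spheres, the chain-rule verification, and the remark about the pole $g^{-1}(\infty)$ lying outside $\overline{B^{n}}$ simply supply the details the paper leaves implicit.
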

\begin{proof}
Since $\Gamma$ preserves the cusped limit point of $\Gamma$, 
the action of $\Gamma$ on $\mathbb{R}^{n}$ can be extend to that on $\operatorname{Bl}_{C}(\mathbb{R}^{n}).$ As $\Gamma$ preserves $\overline{B^{n}}$ and $\partial B^n$, $\Gamma$ acts on $\overline{B}^{bl}$ 
(resp. $\partial B^{bl}$)
\end{proof}

\begin{defi}
Let $\Gamma$ be a geometrically finite subgroup of $M(B^{n})$.
A point $a$ of $\partial B^{bl}$ is a {\it{limit point of $\Gamma$ in $\partial B^{bl}$ }}
if there is a point $x\in B^{n}$ and there exists a sequence $\{g_{i}\}_{i\in \mathbb{N}}$ of $\Gamma$
such that $a= \lim g_{i}x$.
The set of limit points of $\Gamma$ is called {\it{the blown-up limit set of $\Gamma$}}, 
and is denoted by $\Lambda^{bl}(\Gamma).$
\end{defi}

\begin{lem}\label{strict transform of lines}
Let $\eta: U^{n} \rightarrow B^{n}$ be the standard transformation
from $U^{n}$ to $B^{n}$.
For non-zero vector $v\in \partial U = E^{n-1}$ and $a\in \overline{U^{n}}$,
we define 
a half line $l:\lbrack 0, \infty ) \rightarrow \overline{U^{n}} $ by
$l(t) = tv+a$, and put $L(t)= (\eta \circ l)(t).$
Then 
$$\operatorname{St}_{e_{n}}(L(t)) \cap \pi^{-1}(e_{n})= \Bigl\{(e_{n}, \frac{v}{|v|})\Bigr\}
\subset \operatorname{Bl}_{e_{n}}(\mathbb{R}^{n}).$$
In particular, the intersection of $\operatorname{St}_{e_{n}}(L(t))$ with $\pi^{-1}(e_{n})$
is determined by the direction of $v$.
\end{lem}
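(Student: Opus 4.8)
The plan is to reduce the statement to a single explicit coordinate computation using the standard transformation $\eta = \sigma\rho$ recalled in Section \ref{the upper half-space model}. Writing $x = (x', x_n)$ with $x' \in \mathbb{R}^{n-1}$, one has $\rho(x) = (x', -x_n)$, and applying $\sigma$ gives the clean formula
$$\eta(x) - e_n = \frac{2\,(x', -(x_n+1))}{|x'|^2 + (x_n+1)^2}.$$
In particular $\eta(x) \to e_n$ as $|x| \to \infty$, which is just the fact that $e_n = \sigma(\rho(\infty)) = \eta(\infty)$. This is precisely why the half-line $l$, which escapes to $\infty$ in $\overline{U^n}$ as $t \to \infty$, is carried by $\eta$ to a curve $L$ accumulating at $e_n$; no other accumulation occurs, since $\eta$ is a homeomorphism with $\eta^{-1}(e_n) = \{\infty\}$.

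The key observation is that, by Definition \ref{blowing-up at a point}, blowing up at $e_n$ only records the limiting \emph{direction} $(L(t) - e_n)/|L(t)-e_n|$ as $L(t) \to e_n$: a point $x \neq e_n$ has the unique lift $(x, (x-e_n)/|x-e_n|)$, while the exceptional fibre is $\pi^{-1}(e_n) = \{e_n\}\times S^{n-1}$. First I would substitute $x = l(t) = tv + a$ into the displayed formula. Writing $a = (a', a_n)$ and identifying $v$ with its first $n-1$ coordinates, the $n$-th coordinate of $l(t)$ is the constant $a_n$ (as $v \in E^{n-1}$), so the common positive scalar $2/(|tv+a'|^2 + (a_n+1)^2)$ cancels under normalization, leaving
$$\frac{L(t) - e_n}{|L(t) - e_n|} = \frac{(tv + a', -(a_n+1))}{\big|(tv + a', -(a_n+1))\big|}.$$
Since $v \neq 0$, the numerator is asymptotic to $t\,v$ and its norm to $t\,|v|$, so the direction converges to $v/|v|$ as $t \to \infty$. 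Note this holds uniformly whether $a_n > 0$ (so $L \subset B^n$) or $a_n = 0$ (so $L \subset \partial B^n$).

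It then remains to identify $\operatorname{St}_{e_n}(L) \cap \pi^{-1}(e_n)$ with the set of such limiting directions. Because $\eta$ is a homeomorphism and $l$ is injective with $l(t) \to \infty$ only as $t \to \infty$, the lift $t \mapsto (L(t), (L(t)-e_n)/|L(t)-e_n|)$ meets $\pi^{-1}(e_n)$ only in limits along sequences $t_k \to \infty$; by the previous paragraph each such limit has second coordinate $v/|v|$, and conversely this direction is attained. This yields the claimed equality, and since the limit depends only on $v/|v|$, the final assertion that the intersection is determined by the direction of $v$ follows at once.

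The computation is entirely elementary, so the only step needing real care—and the one I would treat as the main (if modest) obstacle—is the last, topological identification: verifying that the exceptional-fibre part of the strict transform is exactly the set of limit directions, with no contribution from finite-$t$ portions of $L$ and no spurious subsequential limits. This rests on showing that $L$ approaches $e_n$ in the single limit $t \to \infty$ and that the limit of directions genuinely exists (not merely along a subsequence), both of which are consequences of $v \neq 0$ together with the injectivity of $\eta$.
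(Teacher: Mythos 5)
Your proposal is correct and takes essentially the same route as the paper: both use $\eta=\sigma\rho$ to compute $L(t)-e_{n}=\frac{2(tv+\rho(a)-e_{n})}{|tv+\rho(a)-e_{n}|^{2}}$ (your coordinate form $\frac{2(x',-(x_{n}+1))}{|x'|^{2}+(x_{n}+1)^{2}}$ is the same formula written out), then normalize and let $t\to\infty$ to get the limit direction $v/|v|$. Your closing topological verification that the exceptional-fibre part of $\operatorname{St}_{e_{n}}(L)$ consists exactly of this limit direction is left implicit in the paper's one-line conclusion $\operatorname{St}_{e_{n}}(L)=L\cup\{(e_{n},\frac{v}{|v|})\}$, but it is routine and your treatment of it is sound.
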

\begin{proof}
By direct calculation, we obtain
$$ L(t)- e_{n} = \frac{2(tv+\rho(a)-e_{n})}{|tv+\rho(a)-e_{n}|^{2}}.$$
Then
$$ \lim_{t\rightarrow \infty} \frac{L(t)- e_{n}}{|L(t)- e_{n}|}
=\lim_{t\rightarrow \infty}\frac{tv+\rho(a)-e_{n}}{|tv+\rho(a)-e_{n}|}
= \frac{v}{|v|}.$$
Hence, $\displaystyle \operatorname{St}_{e_{n}}(L) = L\cup \{ (e_{n}, \frac{v}{|v|})\}.$
\end{proof}

\begin{cor}\label{strict transform of subspaces}
Let $V$ be a vector subspace of $\mathbb{R}^{n}$, 
and let $a$ be a point of $\overline{U^{n}}$.
Let $A$ be an open $r$-neighborhood of $V+a$ in $U^n$ for a positive number $r$.
Then,
$$\operatorname{St}_{e_{n}}(\eta(A)) \cap \pi^{-1}_{e_{n}}(e_{n})
= 
\Bigl\{
(e_{n}, \frac{v}{|v|}) \in \operatorname{Bl}_{e_{n}}(\mathbb{R}^{n})
: v\in V\backslash \{0\}
 \Bigr\} 
 \cong S^{r-1}, $$
 where $r$ is the dimension of the vector space $V$.

\end{cor}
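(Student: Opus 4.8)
The plan is to deduce everything from the single half-line computation of Lemma~\ref{strict transform of lines} by tracking, at the point $e_n$, the Euclidean directions of approach of the curves $\eta(x)$ as $x$ escapes to infinity inside $A$. Recall from the proof of that lemma that, writing $\eta=\sigma\rho$ with $\rho$ the reflection in $\{x_n=0\}$ and $\sigma$ the inversion in $S(e_n,\sqrt 2)$, one has $\eta(x)-e_n=\frac{2(\rho(x)-e_n)}{|\rho(x)-e_n|^2}$, so that the radial direction at $e_n$ is preserved:
\[
\frac{\eta(x)-e_n}{|\eta(x)-e_n|}=\frac{\rho(x)-e_n}{|\rho(x)-e_n|}.
\]
Because $\rho$ is linear and norm-preserving, and $e_n$ is negligible against $\rho(x)$ when $|x|\to\infty$, for any sequence $x_k\to\infty$ in $U^n$ (equivalently $\eta(x_k)\to e_n$, equivalently $|x_k|\to\infty$) the limiting direction of $\eta(x_k)$ at $e_n$ equals $\rho(\hat u)$, where $\hat u=\lim_k x_k/|x_k|$. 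Since a point of $\operatorname{St}_{e_n}(\eta(A))\cap\pi^{-1}(e_n)$ is by definition a point $(e_n,t)$ with $t=\lim_k(\eta(x_k)-e_n)/|\eta(x_k)-e_n|$ for some $x_k\in A$, $\eta(x_k)\to e_n$, the statement reduces to identifying which Euclidean directions at infinity are realized by points of $A$ and then applying $\rho$. Throughout I use, as in Lemma~\ref{strict transform of lines}, that $V\subset E^{n-1}$, so that $V+a$ is a horizontal affine plane (with $a\in U^n$, so its height $a_n$ is positive); this is essential, since a vertical component of $V$ would send its lower rays out of $U^n$ and the formula would fail.

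For the inclusion $\supseteq$ I would invoke Lemma~\ref{strict transform of lines} directly. For each $v\in V\setminus\{0\}$ the half-line $l(t)=tv+a$ lies in $V+a\subset A$, so its image $L=\eta\circ l$ lies in $\eta(A)$ and $\operatorname{St}_{e_n}(L)\subset\operatorname{St}_{e_n}(\eta(A))$; the lemma then places $(e_n,v/|v|)$ in $\operatorname{St}_{e_n}(\eta(A))\cap\pi^{-1}(e_n)$. Letting $v$ range over $V\setminus\{0\}$ yields the full unit sphere of $V$.

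For the reverse inclusion $\subseteq$ I would start from an arbitrary fiber point $(e_n,t)$, realized by a sequence $x_k\in A$ with $\eta(x_k)\to e_n$ and $(\eta(x_k)-e_n)/|\eta(x_k)-e_n|\to t$; by the displayed identity $t=\rho(\hat u)$ with $\hat u=\lim_k x_k/|x_k|$. Choose $y_k\in V+a$ with $d_U(x_k,y_k)<r$. The crucial geometric point, and the one I expect to be the main obstacle, is that a bounded $d_U$-neighborhood of the horizontal plane $V+a$ is only a bounded \emph{Euclidean} perturbation of it (the Euclidean case being strictly easier): from the standard formula $\cosh d_U(x,y)=1+\frac{|x'-y'|^2+(x_n-y_n)^2}{2x_ny_n}$, with $x'$ the first $n-1$ coordinates and $y_n=a_n$ fixed, the bound $d_U(x_k,y_k)<r$ forces $x_{k,n}$ into a compact subinterval of $(0,\infty)$ and $|x_k'-y_k'|$ to stay bounded, hence $|x_k-y_k|$ bounded. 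Writing $y_k=w_k+a$ with $w_k\in V$, $|w_k|\to\infty$, the boundedness of $a$ and of $x_k-y_k$ gives $x_k/|x_k|-w_k/|w_k|\to 0$, whence $\hat u=\lim_k w_k/|w_k|\in V\cap S^{n-1}$. Since $\rho$ fixes $E^{n-1}\supset V$ pointwise, $t=\rho(\hat u)=\hat u=v/|v|$ for the corresponding $v\in V$, which is the required form.

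Finally, the homeomorphism with $S^{\dim V-1}$ is immediate, since $(e_n,v/|v|)\mapsto v/|v|$ identifies the computed fiber with the unit sphere of the vector space $V$. To summarize where the difficulty lies: the horizontality of $V$ is exactly what keeps the heights comparable near infinity, and this comparison is the single ingredient that rules out spurious limiting directions from entering the strict transform; once it is in hand, both inclusions follow from the direction-preservation identity together with Lemma~\ref{strict transform of lines}.
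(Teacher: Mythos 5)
Your proof is correct and takes the route the paper intends: the paper states this as an immediate consequence of Lemma \ref{strict transform of lines} without writing a proof, and your argument supplies exactly the implicit details --- the inclusion $\supseteq$ from half-lines $tv+a$ inside $A$, and the inclusion $\subseteq$ from the observation that an $r$-neighborhood of the horizontal plane $V+a$ is only a bounded Euclidean perturbation of it (via $\cosh d_{U}(x,y)=1+\frac{|x-y|^{2}}{2x_{n}y_{n}}$), so that no limiting directions outside $V$ can enter the strict transform. Your caveat that $V$ must lie in $E^{n-1}$, despite the statement's ``vector subspace of $\mathbb{R}^{n}$,'' is also right (a vertical component would put $-e_{n}$ into the fiber and break the equality) and matches both the hypothesis of the lemma and the paper's only application in Lemma \ref{limit set at a parabolic point}, where $Q=V+a\subset E^{n-1}$; in that application $a_{n}=0$, so the neighborhood must be read as Euclidean, which is the strictly easier case you already cover.
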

\begin{lem}\label{limit set at a parabolic point}
Let $\Gamma$ be a geometrically finite subgroup of $M(B^{n})$.
For every cusped limit point $p$ of $\Gamma$,
we have $$\Lambda^{bl}(\Gamma) \cap \pi_{C}^{-1}(p) =\Lambda^{bl}(\Gamma_{p}) \cong S^{r-1},$$
where $r$ is the rank of the stabilizer group $\Gamma_{p}$, i.e., the rank of maximal abelian subgroup of $\Gamma_{p}$.
\end{lem}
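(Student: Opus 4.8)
The plan is to pass to the upper half-space model and reduce everything to the Euclidean description of a parabolic stabilizer. First I would conjugate $\Gamma$ in $M(U^n)$ so that $p=\infty$; under the standard transformation $\eta$ this corresponds to blowing up at $e_n$, so that by Lemma \ref{strict transform of lines} a point of the fiber $\pi_C^{-1}(p)$ lying in $\partial B^{bl}$ is recorded by a horizontal direction $v/|v|\in E^{n-1}$. Since $p$ is a cusped limit point, the stabilizer $\Gamma_p=\Gamma_\infty$ is an elementary discrete group of parabolic type, hence by the classification of such groups (\cite[Theorem 5.5.5]{Ratcliffe}) conjugate to an infinite discrete subgroup of $I(E^{n-1})$; its maximal free abelian subgroup is a rank-$r$ lattice of translations spanning an $r$-dimensional linear subspace $V\subset E^{n-1}$, and the whole orbit $\Gamma_p x$ of a fixed $x\in U^n$ has bounded Hausdorff distance from the affine plane $V+a$ for a suitable $a$.

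Next I would compute $\Lambda^{bl}(\Gamma_p)$. Because $\Lambda(\Gamma_p)=\{p\}$, every blown-up limit of $\Gamma_p$ lies in $\pi_C^{-1}(p)$; and since $\Gamma_p x$ sits at fixed height and is translated by the lattice in $V$, the horizontal directions $w/|w|$ of its points accumulate on, and in fact fill, the unit sphere of $V$. Together with Lemma \ref{strict transform of lines} this gives $\Lambda^{bl}(\Gamma_p)=\{v/|v|:v\in V\setminus\{0\}\}$, which is homeomorphic to $S^{r-1}$ by Corollary \ref{strict transform of subspaces}; this already establishes the second assertion of the lemma.

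For the equality $\Lambda^{bl}(\Gamma)\cap\pi_C^{-1}(p)=\Lambda^{bl}(\Gamma_p)$, the inclusion $\supseteq$ is immediate from $\Gamma_p\le\Gamma$, as all these limits lie over $p$. For $\subseteq$, take $\xi=\lim_i g_i x\in\Lambda^{bl}(\Gamma)\cap\pi_C^{-1}(p)$ and put $y_i=g_i x$, so that $y_i\to\infty$ in $U^n$; membership in $\partial B^{bl}$ forces the height of $y_i$ to be $o(|(y_i)_{\mathrm{horiz}}|)$ and identifies $\xi$ with the limiting horizontal direction $(y_i)_{\mathrm{horiz}}/|(y_i)_{\mathrm{horiz}}|$. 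The geometric input is that, $\Gamma$ being geometrically finite, every orbit point stays within a fixed hyperbolic distance $D$ of the invariant convex hull $C(\Gamma)$, while near the bounded parabolic point $p$ the set $\Lambda(\Gamma)\setminus\{p\}$ has bounded Hausdorff distance from $V+a$ (this is exactly the bounded-parabolic condition together with cocompactness of $\Gamma_p$ on $V+a$), so $C(\Gamma)$ lies within bounded hyperbolic distance of the totally geodesic slab $(V+a)\times\mathbb{R}_{>0}$. Converting a bounded hyperbolic distance from this slab into Euclidean terms yields $\operatorname{dist}((y_i)_{\mathrm{horiz}},V+a)\lesssim (y_i)_n$, so the $V^\perp$-component of $(y_i)_{\mathrm{horiz}}$ is $o(|(y_i)_{\mathrm{horiz}}|)$; hence the limiting direction lies in $V$ and $\xi\in\Lambda^{bl}(\Gamma_p)$.

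I expect this last inclusion to be the main obstacle: ruling out orbit points that escape to $p$ along directions transverse to $V$ is what forces one to combine geometrical finiteness (bounded distance to the convex core), the bounded-parabolic structure of the limit set at $p$, and the $\partial B^{bl}$ tangency condition into the single estimate ``transverse displacement $\lesssim$ height'', after which Lemma \ref{strict transform of lines} and Corollary \ref{strict transform of subspaces} finish the identification with $S^{r-1}$.
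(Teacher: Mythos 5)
Your proposal is correct in substance. Its first half (the identification $\Lambda^{bl}(\Gamma_p)\cong S^{r-1}$ via the $\Gamma_p$-invariant plane $Q=V+a$ on which $\Gamma_p$ acts cocompactly, lattice translations filling the unit sphere of $V$, and Lemma \ref{strict transform of lines} together with Corollary \ref{strict transform of subspaces}) is essentially the paper's argument. The second half takes a genuinely different route: the paper splits the sequence $\{g_i\}$ into the case of infinitely many $g_i\in\Gamma_p$ (immediate) versus the case where, via the proper cusped regions of Theorem \ref{Ratcliffe_thm_12.6.5}, the orbit points $g_ix$ are trapped in a region $N(Q,r)$, after which Corollary \ref{strict transform of subspaces} applies; you instead derive the trapping directly from the bounded-parabolic structure: $\Lambda(\Gamma)\setminus\{p\}$ lies in a bounded Euclidean neighborhood of $Q$, hence $C(\Gamma)$ is confined near the vertical plane over $Q$, and orbit points stay at bounded hyperbolic distance from $C(\Gamma)$. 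This buys independence from the cusped-region machinery at the cost of handling the convex hull explicitly; note also that what bounds $d(g_ix,C(\Gamma))$ is merely $\Gamma$-invariance of $C(\Gamma)$, not geometrical finiteness --- the hypothesis doing real work is that $p$ is a bounded parabolic (cusped) point.

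Two repairs are needed. First, your intermediate claim that $C(\Gamma)$ lies within \emph{bounded hyperbolic distance} of the totally geodesic plane $(V+a)\times\mathbb{R}_{>0}$ is false in general: if $w\in\Lambda(\Gamma)\setminus\{p\}$ satisfies $d_E(w,Q)=R>0$, then points of $C(\Gamma)$ approaching $w$ at height $t\to 0$ have distance roughly $\operatorname{arcsinh}(R/t)\to\infty$ from that plane. What is true, and suffices, is the Euclidean confinement $C(\Gamma)\subset\{(z,t):d_E(z,Q)\le R,\ t>0\}$ --- this slab is hyperbolically convex because $N_E(Q,R)$ is convex and contains $\Lambda(\Gamma)\setminus\{p\}$ in its closure --- and then $d(y_i,C(\Gamma))\le D$ yields $d_E\bigl((y_i)_{\mathrm{horiz}},Q\bigr)\le R+C\,(y_i)_n$ rather than your cleaner bound; the additive constant $R$ is harmless since $|y_i|\to\infty$ and membership of the limit in $\partial B^{bl}$ forces $(y_i)_n=o\bigl(|(y_i)_{\mathrm{horiz}}|\bigr)$, so the transverse component is still $o\bigl(|(y_i)_{\mathrm{horiz}}|\bigr)$ and the limiting direction lies in $V$. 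Second, when $\Gamma$ is elementary of parabolic type the limit set is $\{p\}$, the hull $C(\Gamma)$ is empty, and your argument for the inclusion $\subseteq$ has no content; but then $\Gamma=\Gamma_p$ and the equality is trivial, so you should dispose of this case separately (the paper's dichotomy absorbs it automatically).
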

\begin{proof}
By conjugating $\Gamma$ in $M(B^{n})$,
we can suppose $p=e_{n}.$
Let $\eta: U^{n} \rightarrow B^{n}$ be the standard transformation.
The group $\Gamma$ acts on $U^n$ by setting
$\gamma (x) = (\eta^{-1}\gamma \eta) (x) $ for any $\gamma \in \Gamma.$

First, we show that $\Lambda^{bl}(\Gamma_{p}) \cong S^{r-1}.$
There is a $\Gamma_{\infty}$-inveriant $r$-plane $Q$ of $E^{n-1}$ such that
$Q/\Gamma_{\infty}$ is compact.
For any $r>0$, the $r$-neighborhood $N(Q,r)$ of $Q$ in $U^{n}$ is $\Gamma_{\infty}$-invariant.
The $r$-plane $Q$ has a description of the form $Q=V+a,$
where $V$ is the $r$-dimensional vector subspace of $E^{n-1}$ 
and $a$ is a vector in $Q$.
By Corollary \ref{strict transform of subspaces},
we have
$$\Lambda^{bl}(\Gamma_{p}) \subset 
\{ (e_{n}, \frac{v}{|v|}) \in
\operatorname{Bl}_{e_{n}}(\mathbb{R}^{n}: 
v \in V\backslash \{0\} \}.$$

Conversely,
for $v\in V\backslash \{0\}$ and any integer $n>0$,
there is an element $\gamma_{n}$ of $\Gamma_{\infty}$ such that
$\gamma_{n} a \in N(nv+a, R),$
where $R$ is the Euclidean diameter of a fundamental domain of $Q$ for $\Gamma_{\infty}.$
By Lemma \ref{strict transform of lines},
$$\lim_{n\rightarrow \infty} \gamma_{n}x = \frac{v}{|v|} \in \Lambda^{bl}(\Gamma_{p}).$$
Hence, we have
$$ \Lambda^{bl}(\Gamma_{p}) =
\{ (e_{n}, \frac{v}{|v|}) \in \operatorname{Bl}_{e_{n}}(\mathbb{R}^{n}): v\in V\backslash \{0\}\} \cong S^{r-1}$$

Secondly, we show that $\Lambda^{bl}(\Gamma) \cap \pi_{C}^{-1}(p) =\Lambda^{bl}(\Gamma_{p}).$
Let $a$ be a point in $\Lambda^{bl}(\Gamma) \cap \pi_{C}^{-1}(p).$
There exist a point $x\in B^{n}$ and a sequence $\{g_{i}\}$ of $\Gamma$ such that
$\displaystyle a=\lim_{n\rightarrow \infty} g_{i}x.$
If the sequence $\{g_{i}\}$ has infinitely many elements of $\Gamma_{p}$,
by the definition of $\Lambda^{bl}(\Gamma_{p})$, we have $a\in \Lambda^{bl}(\Gamma_{p}).$
Otherwise, there exists a proper cusped region $U(Q, r)$ such that
$g_{i}x \in N(Q,r)$ for infinitely many $i\in \mathbb{N}.$
By Corollary \ref{strict transform of subspaces}, $a$ is in $\Lambda^{bl}(\Gamma_{p}).$
Hence we have $\Lambda^{bl}(\Gamma) \cap \pi_{C}^{-1}(p) \subset \Lambda^{bl}(\Gamma_{p}).$
The converse inclusion is obvious.
\end{proof}

\begin{lem}\label{limit set}
For any $x\in B^{n}$, 
$$ \Lambda^{bl}(\Gamma) = \overline{\Gamma x} \cap \partial B^{bl}.$$
In particular,
the blown-up limit set of $\Gamma$ 
is a $\Gamma$-inveriant closed subset of $B^{bl}$.
\end{lem}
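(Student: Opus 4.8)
The plan is to prove the two inclusions separately; the only real content is the independence of the base point $x$. The inclusion $\overline{\Gamma x}\cap\partial B^{bl}\subset\Lambda^{bl}(\Gamma)$ is immediate from the definition: since the orbit $\Gamma x$ lies in $B^{n}$ while every point of $\partial B^{bl}$ projects under $\pi_{C}$ to $\partial B^{n}$, a point $a\in\overline{\Gamma x}\cap\partial B^{bl}$ cannot belong to $\Gamma x$ itself, so it is a genuine accumulation point of the orbit, i.e.\ $a=\lim g_{i}x$ for some sequence $\{g_{i}\}\subset\Gamma$; by definition $a\in\Lambda^{bl}(\Gamma)$.

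For the reverse inclusion I would take $a\in\Lambda^{bl}(\Gamma)$, realized as $a=\lim g_{i}y$ for some $y\in B^{n}$ and some sequence, and fix an arbitrary $x\in B^{n}$; the goal is to produce a $\Gamma$-sequence sending $x$ to $a$. Applying the continuous map $\pi_{C}$ gives $p\coloneqq\pi_{C}(a)=\lim g_{i}y\in\Lambda(\Gamma)$, the classical limit set, and I would split into two cases according to whether $p$ is a cusped limit point. If $p\notin C$, then the fibre $\pi_{C}^{-1}(p)$ is a single point, so $a=p$; since $d(g_{i}x,g_{i}y)=d(x,y)$ is bounded, $g_{i}x\to p$ in $\overline{B^{n}}$, and because $p\notin C$ the map $\pi_{\mathcal{P}}$ is a local homeomorphism near $p$ on every finite blow-up $\operatorname{Bl}_{\mathcal{P}}(\mathbb{R}^{n})$ with $p\notin\mathcal{P}$, so $g_{i}x\to a$ in the inverse limit $\overline{B}^{bl}$ and $a\in\overline{\Gamma x}$.

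The heart of the argument is the cusped case $p\in C$, and this is where I expect the main obstacle to lie: convergence in $\partial B^{bl}$ over a cusp records not merely that the orbit approaches $p$ but the precise direction of approach, and one must check this direction is unchanged when the base point is moved. By Lemma~\ref{limit set at a parabolic point} we already know $a\in\Lambda^{bl}(\Gamma_{p})$, so it suffices to realize $a$ as a blow-up limit of the orbit $\Gamma_{p}x$. Passing to the upper half-space model $U^{n}$ with $p=\infty$, every $\gamma\in\Gamma_{p}=\Gamma_{\infty}$ acts as a \emph{height-preserving} Euclidean isometry $(w,s)\mapsto(Aw+b,\,s)$. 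Choosing $\gamma_{n}\in\Gamma_{p}$ with $\gamma_{n}z\to a$ as in the proof of Lemma~\ref{limit set at a parabolic point}, the two orbit points satisfy $\gamma_{n}x-\gamma_{n}z=(A_{n}(w_{x}-w_{z}),\,s_{x}-s_{z})$, of bounded Euclidean norm, while both escape to $\infty$; by Lemma~\ref{strict transform of lines} and Corollary~\ref{strict transform of subspaces} the strict-transform limit over $e_{n}$ depends only on the asymptotic Euclidean direction, so $\gamma_{n}x$ and $\gamma_{n}z$ share the same blow-up limit $a$. Hence $a\in\overline{\Gamma_{p}x}\subset\overline{\Gamma x}$, completing the reverse inclusion. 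It is precisely the height-preservation of $\Gamma_{p}$ that keeps two orbits at bounded Euclidean distance and thereby forces the same limiting direction.

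Finally, for the ``in particular'' statement: $\partial B^{bl}$ is closed in $\overline{B}^{bl}$ (being a strict transform) and $\overline{\Gamma x}$ is a closure, so their intersection $\Lambda^{bl}(\Gamma)$ is closed. For $\Gamma$-invariance I would use that each $\gamma\in\Gamma$ acts as a homeomorphism of $\overline{B}^{bl}$ preserving $\partial B^{bl}$, together with $\gamma\,\overline{\Gamma x}=\overline{\gamma\Gamma x}=\overline{\Gamma x}$; intersecting with the invariant set $\partial B^{bl}$ yields $\gamma\,\Lambda^{bl}(\Gamma)=\Lambda^{bl}(\Gamma)$.
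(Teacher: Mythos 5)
Your proposal is correct and takes essentially the same route as the paper: the inclusion $\overline{\Gamma x}\cap\partial B^{bl}\subset\Lambda^{bl}(\Gamma)$ directly from the definition, and the converse split according to whether $\pi_{C}(a)$ is a cusped limit point, with the cusped case reduced to Lemma \ref{limit set at a parabolic point}. The only difference is one of completeness: you make explicit the base-point independence (the height-preserving Euclidean action of $\Gamma_{p}$ in the upper half-space model keeping $|\gamma_{n}x-\gamma_{n}z|$ bounded, hence forcing the same limiting direction), which the paper leaves implicit in its assertion $\Lambda^{bl}(\Gamma_{\overline{a}})=\overline{\Gamma_{\overline{a}}x}\cap\partial B^{bl}$ and in the phrase ``in the same way as a limit point of $\Gamma$'' for the non-cusped case.
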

\begin{proof}
By the definition of $\Lambda^{bl}(\Gamma)$, 
it is obvious that 
$\Lambda^{bl}(\Gamma) \supset \overline{\Gamma x} \cap \partial B^{bl}.$

Conversely,
for any $a\in \Lambda^{bl}(\Gamma)$, we put $\overline{a}=\pi_{C}(a) \in \Lambda (\Gamma)$.
If $a$ does not lie in the set $C$ of cusped limit points of $\Gamma$, we can show that $a$ is a limit point of $\Gamma$ in $\partial B^{bl}$ 
in the same way as a limit point of $\Gamma$ in $\partial B^{n-1}$.
If $a\in C$, applying Lemma \ref{limit set at a parabolic point} to the cusped limit point $a$,
$$
a \in \Lambda^{bl}(\Gamma)\cap \pi^{-1}_{C}(\overline{a})
= \Lambda^{bl}(\Gamma_{\overline{a}}) 
= \overline{\Gamma_{\overline{a}}x}\cap \partial B^{bl} 
\subset \overline{\Gamma x}\cap \partial B^{bl}.
$$
Therefore $\Lambda^{bl} (\Gamma) \subset \overline{\Gamma x}\cap \partial B^{bl}.$
\end{proof}

\begin{lem}\label{the blowing-up boundary equals strict transform}
Let $\Gamma$ be a non-elementary geometrically finite subgroup of $M(B^{n})$.
Then, $$\Lambda^{bl}(\Gamma) = \operatorname{St}_{C}(\Lambda(\Gamma)).$$

\end{lem}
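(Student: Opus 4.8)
The plan is to prove the two inclusions separately, reducing everything to a fibrewise analysis over the projection $\pi_{C}\colon \operatorname{Bl}_{C}(\mathbb{R}^{n})\to \mathbb{R}^{n}$. The easy inclusion is $\operatorname{St}_{C}(\Lambda(\Gamma))\subset \Lambda^{bl}(\Gamma)$. By Lemma \ref{limit set}, $\Lambda^{bl}(\Gamma)=\overline{\Gamma x}\cap \partial B^{bl}$ is a \emph{closed} subset of $\partial B^{bl}$. Any point of $\pi_{C}^{-1}(\Lambda(\Gamma)\backslash C)$ lies over a limit point $\overline{a}\notin C$, and over $\partial B^{n}\backslash C$ the map $\pi_{C}$ is a local homeomorphism; hence if $\overline{a}=\lim g_{i}x$ in $\partial B^{n}$ then the interior orbit points $g_{i}x$, which are unaffected by the blow-up, also converge to the corresponding point in $\partial B^{bl}$, so that point lies in $\Lambda^{bl}(\Gamma)$. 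Thus $\pi_{C}^{-1}(\Lambda(\Gamma)\backslash C)\subset \Lambda^{bl}(\Gamma)$, and taking closures gives $\operatorname{St}_{C}(\Lambda(\Gamma))\subset \Lambda^{bl}(\Gamma)$.

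For the reverse inclusion I would stratify a point $a\in \Lambda^{bl}(\Gamma)$ by its image $\overline{a}=\pi_{C}(a)\in \Lambda(\Gamma)$. If $\overline{a}\notin C$, then again $\pi_{C}$ is a local homeomorphism near $a$, so $a$ is identified with $\overline{a}\in \Lambda(\Gamma)\backslash C\subset \operatorname{St}_{C}(\Lambda(\Gamma))$. The only substantive case is $\overline{a}=p$ for a cusped limit point $p\in C$. Here I invoke Lemma \ref{limit set at a parabolic point}, which, after conjugating so that $p=e_{n}$ and passing via the standard transformation $\eta$ to the upper half-space model, identifies
$$\Lambda^{bl}(\Gamma)\cap \pi_{C}^{-1}(p)=\Lambda^{bl}(\Gamma_{p})=\Bigl\{\bigl(e_{n},\tfrac{v}{|v|}\bigr):v\in V\backslash\{0\}\Bigr\}\cong S^{r-1},$$
where $\Gamma_{\infty}$ preserves an $r$-plane $Q=V+a_{0}\subset E^{n-1}$ with $Q/\Gamma_{\infty}$ compact. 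It then remains to realize each direction $v/|v|$ by honest limit points lying in $\Lambda(\Gamma)\backslash C$, so that it genuinely contributes to the strict transform.

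To do this I would fix a point $\xi\in \Lambda(\Gamma)\backslash C$, which exists because $\Gamma$ is non-elementary (so $\Lambda(\Gamma)$ is perfect, hence uncountable) while $C$ is a countable set. Since $C$ and $\Lambda(\Gamma)$ are $\Gamma$-invariant, the whole orbit $\Gamma_{\infty}\xi$ lies in $\Lambda(\Gamma)\backslash C$; and because the Euclidean isometries in $\Gamma_{\infty}$ preserve the distance to $Q$, this orbit stays inside a bounded neighbourhood $N(Q,R)$. Using cocompactness of the $\Gamma_{\infty}$-action on $Q$, for a fixed $v\in V\backslash\{0\}$ I can choose $\gamma_{k}\in \Gamma_{\infty}$ with $\gamma_{k}\xi$ within bounded distance of the ray $t\mapsto tv+a_{0}$, so that $\gamma_{k}\xi/|\gamma_{k}\xi|\to v/|v|$. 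By the computation underlying Lemma \ref{strict transform of lines} (equivalently Corollary \ref{strict transform of subspaces}), the images $\eta(\gamma_{k}\xi)\in \Lambda(\Gamma)\backslash C$ then converge to $(e_{n},v/|v|)$ in $\operatorname{Bl}_{e_{n}}(\mathbb{R}^{n})$, whence $(e_{n},v/|v|)\in \operatorname{St}_{C}(\Lambda(\Gamma))$. This yields $\Lambda^{bl}(\Gamma)\cap \pi_{C}^{-1}(p)\subset \operatorname{St}_{C}(\Lambda(\Gamma))$, and combined with the first inclusion establishes the equality.

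The main obstacle is the cusped case, and specifically the realization step: the identification of $\Lambda^{bl}(\Gamma_{p})$ in Lemma \ref{limit set at a parabolic point} produces the tangent directions from orbits $\gamma_{k}x$ of an \emph{interior} base point $x\in B^{n}$, whereas the strict transform is built from points of $\Lambda(\Gamma)\backslash C$ themselves. The work is therefore in replacing the interior base point by a boundary limit point $\xi$ while keeping the approximating sequence inside $\Lambda(\Gamma)\backslash C$, and in transferring the escaping directions of $\Lambda(\Gamma)$ near the bounded parabolic point (controlled by the invariant $r$-plane $Q$ via the boundedness $\Lambda(\Gamma)\backslash\{p\}\subset \eta(N(Q,R))$) into blown-up directions at $e_{n}$ through the geometry of $\eta$ encoded in Lemma \ref{strict transform of lines}. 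Everything away from $C$ is routine, so the whole proof hinges on this one fibre computation over each cusp.
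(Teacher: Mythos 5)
Your proof is correct, and its overall skeleton matches the paper's: the easy inclusion $\operatorname{St}_{C}(\Lambda(\Gamma))\subset\Lambda^{bl}(\Gamma)$ via the closedness of $\Lambda^{bl}(\Gamma)$ from Lemma \ref{limit set}, then a fibrewise analysis over each cusp $p\in C$ combining Lemma \ref{limit set at a parabolic point} with the proper cusped region $U(Q,r)$ and the direction computations of Lemma \ref{strict transform of lines} and Corollary \ref{strict transform of subspaces}. Where you genuinely go beyond the printed proof is the realization step, and you have correctly identified it as the crux. The paper's ``Conversely'' only derives, from $\Lambda(\Gamma)\subset N(Q,r)\cup\{\infty\}$, the containment $\operatorname{St}_{C}(\Lambda(\Gamma))\cap\pi_{C}^{-1}(p)\subset\Lambda^{bl}(\Gamma_{p})$, i.e.\ that the strict-transform fibre sits inside the sphere of directions of $V$ --- but that is the same direction as the first inclusion; what the equality actually needs is the reverse fibre containment $\Lambda^{bl}(\Gamma_{p})\subset\operatorname{St}_{C}(\Lambda(\Gamma))$, namely that every direction $v/|v|$, $v\in V\backslash\{0\}$, is approximated by points of $\Lambda(\Gamma)\backslash C$ itself, not merely by interior orbit points. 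Your argument supplies exactly this missing half: choose $\xi\in\Lambda(\Gamma)\backslash C$ (possible since non-elementarity makes $\Lambda(\Gamma)$ perfect, hence uncountable, while $C$ is countable and $\Gamma$-invariant), note that $\Gamma_{\infty}\xi\subset\Lambda(\Gamma)\backslash C$ stays in a bounded neighbourhood of $Q$ because $\Gamma_{\infty}$ acts by Euclidean isometries preserving $Q$, push $\gamma_{k}\xi$ out along the ray $t\mapsto tv+a_{0}$ by cocompactness of $Q/\Gamma_{\infty}$, and apply the computation of Lemma \ref{strict transform of lines} to get $\eta(\gamma_{k}\xi)\to(e_{n},v/|v|)$ in the blow-up. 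This is also precisely where the non-elementary hypothesis bites, and your write-up makes that visible: for an elementary parabolic group one has $\Lambda(\Gamma)=\{p\}$, so $\operatorname{St}_{C}(\Lambda(\Gamma))=\varnothing$ while $\Lambda^{bl}(\Gamma_{p})\cong S^{r-1}$, so the hard inclusion genuinely fails without it.

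One small point to tighten: since cusped limit points may accumulate in $\Lambda(\Gamma)$, the assertion that $\pi_{C}$ is a local homeomorphism over $\partial B^{n}\backslash C$ should be read levelwise in the projective limit defining $\operatorname{Bl}_{C}(\mathbb{R}^{n})$: for each finite $\mathcal{P}\subset C$ the map $\pi_{\mathcal{P}}$ is a local homeomorphism away from $\mathcal{P}$, and convergence in $\operatorname{Bl}_{C}$ is verified at every finite level. The two convergence statements you use ($g_{i}x$ converging to the lift of $\overline{a}\notin C$, and $\eta(\gamma_{k}\xi)\to(e_{n},v/|v|)$) do hold in this sense, so the argument stands as written.
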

\begin{proof}
By Lemma \ref{limit set}, we have $\Lambda^{bl}(\Gamma) 
\supset 
\overline{\pi^{-1}_{C}(\Lambda(\Gamma)\backslash C)}
=
\operatorname{St}_{C}(\Lambda(\Gamma)).$

Conversely,
it is enough to show that
$ \operatorname{St}_{C}(\Lambda(\Gamma)) \cap \pi^{-1}_{C}(p)
\subset \Lambda^{bl}(\Gamma_{p}) $ for each $p \in C.$
By conjugating $\Gamma$ in $M(B^{n})$, we can suppose $p=e_{n}$.
As the similar way in Lemma \ref{limit set at a parabolic point},
we pass the upper half-space model $U^n$, and $\Gamma$ acts on $U^n$.
Let $U(Q,r)$ be a proper cusped region based at $\infty$.
Then $\Lambda(\Gamma) \subset N(Q, r) \cup \{\infty\}.$
There exist a vector subspace $V$ of $E^{n-1}$ and a vector $a$ of $Q$
such that $Q=V+a.$
By Corollary \ref{strict transform of subspaces} 
and Lemma \ref{limit set at a parabolic point}
we have 
$$ \operatorname{St}_{C}(\Lambda(\Gamma)) \cap \pi^{-1}_{C}(p)
\subset \big\{(e_{n}, \frac{v}{|v|}) : v\in V\backslash \{0\}\big\} 
= \Lambda^{bl}(\Gamma_{p})
$$
\end{proof}
%
%
%

\begin{defi}
Let $X$ be a K3 surface,
and let $V$ (resp. $C_{X}$) be 
the same as in Definition \ref{the sets of bounded parabolic points}.
We denote the strict transform of $\partial A_{X}$ (resp. $\overline{A_{X}}$) at $C_{X}$
by $\partial A_{X}^{bl}$ (resp. $\overline{A_{X}}^{bl}$).
\end{defi}

The following result is given by Baragar \cite{Baragar2}.
By using the geometrically finiteness of $O^{+}(NS(X))$,
we can give another proof of the result.
\begin{thm}[ {\cite[Theorem 3.3]{Baragar2} }]\label{Baragar's result for limit sets}
For a K3 surface $X$, 
the set $\partial A_{X} \backslash \Lambda_{X}$ 
is equal to 
$V\backslash C_{X}
=\{c\in V: \operatorname{Aut}(A(X))_{c} {\text{ is finite}}\}.$
\end{thm}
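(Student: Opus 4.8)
The plan is to establish the two displayed equalities separately: first the algebraic identity $V\setminus C_X=\{c\in V:\operatorname{Aut}(A(X))_c\text{ is finite}\}$, and then the geometric identity $\partial A_X\setminus\Lambda_X=V\setminus C_X$, where throughout $\partial A_X$ denotes the ideal boundary $\overline{A_X}\cap\partial\mathbb{H}^n_X$ and $\Lambda_X\subseteq\partial A_X$. As in the proof of Lemma \ref{horoballs} I treat the non-elementary case, so that $\Lambda_X$ is perfect (the elementary case is immediate).

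For the algebraic identity, fix $c\in V$. Since $O^{+}(NS(X))$ acts on $\mathbb{H}^n_X$ with finite covolume (Lemma \ref{generalized polytope}) and $c$ is an ideal vertex of its fundamental domain, the stabilizer $O^{+}(NS(X))_c$ is a full-rank crystallographic group of the horospherical $E^{n-1}$, hence virtually $\mathbb{Z}^{n-1}$, so $\operatorname{vcd}O^{+}(NS(X))_c=n-1$. Using the chamber decomposition $O^{+}(NS(X))=W(X)\rtimes\operatorname{Aut}(A(X))$ together with the fact that $c$ lies in the closure of the $W(X)$-fundamental domain $A_X$ (Proposition \ref{ample cone vs Dirichlet domain}), the stabilizer splits as $O^{+}(NS(X))_c=W(X)_c\rtimes\operatorname{Aut}(A(X))_c$; this is exactly the additivity $\operatorname{vcd}W(X)_c+\operatorname{vcd}\operatorname{Aut}(A(X))_c=n-1$ already recorded in the proof of Lemma \ref{the rank of MW}, where $\operatorname{vcd}\operatorname{Aut}(X)_c=\operatorname{vcd}\operatorname{Aut}(A(X))_c$ by Theorem \ref{aut1}. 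As $\operatorname{Aut}(A(X))_c$ is a discrete elementary group, it is finite precisely when its virtual cohomological dimension vanishes, and by the additivity this happens exactly when $\operatorname{vcd}W(X)_c=n-1$, i.e. exactly when $c\notin C_X$. This proves the algebraic identity.

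For the inclusion $V\setminus C_X\subseteq\partial A_X\setminus\Lambda_X$, take $c\in V\setminus C_X$: by Lemma \ref{horoballs} the point $c$ is isolated in $\partial A_X$, and since $\Lambda_X$ is perfect no isolated point of $\partial A_X$ lies in $\Lambda_X$; as $c\in V\subseteq\partial A_X$ we get $c\in\partial A_X\setminus\Lambda_X$. For the reverse inclusion, take $c\in\partial A_X$ with $c\notin\Lambda_X$. Then $c$ lies in the ordinary set of $\operatorname{Aut}(A(X))$, so it is fixed by no parabolic or hyperbolic element (whose fixed points are limit points); hence the discrete group $\operatorname{Aut}(A(X))_c$ consists only of elliptic elements and is finite. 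It remains to show $c\in V$, for then Part A forces $c\in V\setminus C_X$. By Lemma \ref{generalized polytope2} we have $\overline{A_X}^{H}=\bigcup_{\gamma\in\operatorname{Aut}(A(X))}\gamma\overline{P}$, and since $P$ has finite covolume each chamber $\gamma\overline{P}$ meets $\partial\mathbb{H}^n_X$ only in its finite set of ideal vertices $\gamma I_P$. Taking closures in $\overline{\mathbb{H}}^n_X$ and intersecting with $\partial\mathbb{H}^n_X$, local finiteness of the tessellation shows every point of $\partial A_X$ lies in the closure $\overline{V}$ of $V=\operatorname{Aut}(A(X))\cdot I_P$, and any point of $\overline{V}\setminus V$ is an accumulation point of the orbit $\operatorname{Aut}(A(X))\cdot I_P$, hence a limit point. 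Therefore $c\in\partial A_X\setminus\Lambda_X\subseteq\overline{V}\setminus\Lambda_X\subseteq V$, as required.

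The main obstacle is this last geometric step: the identification $\partial A_X\subseteq\overline{V}$ together with $\overline{V}\setminus V\subseteq\Lambda_X$. Its content is that the only boundary-at-infinity points contributed by the tessellation $\{\gamma\overline{P}\}$ are the images $\gamma I_P$ of the finitely many ideal vertices of $P$ — this is where finite covolume of $O^{+}(NS(X))$, and hence the absence of positive-dimensional ideal faces of $\overline{P}$, is essential — and their accumulation points, the latter being limit points of $\operatorname{Aut}(A(X))$ by definition. Once this is in place the theorem follows, recovering Baragar's description of $\partial A_X\setminus\Lambda_X$ as precisely the isolated ideal vertices of the ample cone, equivalently those $c\in V$ with $\operatorname{Aut}(A(X))_c$ finite.
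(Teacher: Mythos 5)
Your Part A (the identity $V\setminus C_X=\{c\in V:\operatorname{Aut}(A(X))_c\text{ is finite}\}$ via the additivity $\operatorname{vcd}W(X)_c+\operatorname{vcd}\operatorname{Aut}(A(X))_c=n-1$ extracted from the proof of Lemma \ref{the rank of MW}) and your forward inclusion (isolation of $c\in V\setminus C_X$ from Lemma \ref{horoballs}, combined with perfectness of $\Lambda_X$) are sound, and the paper argues these points in essentially the same way. The genuine gap is in your reverse inclusion, at exactly the step you yourself flag as the main obstacle. The assertion that ``local finiteness of the tessellation shows every point of $\partial A_X$ lies in the closure $\overline{V}$'' does not follow: the tessellation $\{\gamma\overline{P}\}$ is locally finite only in $\mathbb{H}^n_X$, and each chamber is unbounded, so the closure of the union in $\overline{\mathbb{H}}^n_X$ acquires ideal points approached through infinitely many distinct chambers; controlling those points is the entire content of the theorem, and nothing in your sketch handles them. (As stated, $\partial A_X\subseteq\overline{V}$ even fails when $O^{+}(NS(X))$ is cocompact, where $V=\varnothing$ while the theorem holds with both sides empty.) A second, smaller flaw: ``an accumulation point of the orbit $\operatorname{Aut}(A(X))\cdot I_P$, hence a limit point by definition'' is a mislabel --- the limit set is defined via orbits of points of $B^n$, and the (true) corresponding statement for orbits of boundary points requires the closedness and invariance of $\Lambda_X$ together with the convergence property of discrete M\"obius groups, not the definition.

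What is missing is precisely the mechanism the paper uses for the crux inclusion (every non-cusp point of $\partial A_X$ is a limit point). The paper truncates the Dirichlet domain $P$ by the equivariant, pairwise disjoint horoball family of Theorem \ref{Ratcliffe_thm_12.6.5}, obtaining a compact core $K=\overline{P}\setminus\bigcup_{c}U(c)$ with $\operatorname{Aut}(A(X))\cdot K=\overline{A_X}^{H}\setminus\bigcup_{c}U(c)$ by Lemma \ref{generalized polytope2}; then, for $y\in\partial A_X$ with $y\notin V$, the geodesic ray $l_{x,y}$ cannot have a tail inside the disjoint union of horoballs (a connected tail would lie in a single horoball, forcing $y$ to be its base point, i.e.\ $y\in V$), so it meets translates $\gamma_nK$ for infinitely many times $a_n\to\infty$; since the Euclidean diameters of $\gamma_nK$ tend to zero, $y=\lim l_{x,y}(a_n)=\lim\gamma_n x\in\Lambda_X$. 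This shrinking-compact-core argument is exactly what converts ``approached through infinitely many chambers'' into ``limit point,'' and any honest repair of your tessellation step would in effect reproduce it. So while your overall architecture (explicit algebraic identity plus analysis at the sphere at infinity) is a reasonable reorganization, the proposal does not contain an independent complete proof of the inclusion $\partial A_X\setminus\Lambda_X\subseteq V$.
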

\begin{proof}
Let $x$ be a point of $A_{X}.$
For any $y \in \partial A_{X}$,
we denote the hyperbolic line connecting $x$ to $y$ by $l_{x,y}.$

Let $P$ be the Dirichlet domain of $O^{+}(NS(X))$ centered at $x$.
Then there exists a horoball $U(c)$ based at $c$ such that

\begin{enumerate}
	\item $\{ \overline{U(c)} : c \in V_{NS}\}$ are mutually disjoint,
	\item $gU(c) = U(gc)$ for any element $g \in O^{+}(NS(X))$,
	\item $\{\overline{U(c)}\backslash {c} \} $ is locally finite in $\mathbb{H}^{n}_{X}$,
\end{enumerate}
where the set $V_{NS}$ is the same as in Definition 
\ref{the sets of bounded parabolic points}.
We put $K= \overline{P}\backslash \bigcup_{c\in C}U(c).$
Then, by Lemma \ref{generalized polytope2}, 
$\overline{A_{X}}^{H}\backslash \bigcup_{c\in C}U(c) = \operatorname{Aut}(A(X))\cdot K.$

If $y\notin C$,
since cusped regions $\{U(c): c\in V_{NS}\}$ are mutually disjoint,
there exists a sequence $\{a_{n}\}_{n\in \mathbb{N}}$ and 
$\gamma_{n} \in \operatorname{Aut}(A(X))$ 
such that  $a_{n}$ converges to infinity as $n\rightarrow \infty$ and 
$ l_{x,y}(a_{n}) \in \gamma_{n}K.$
Since the Euclidean diameter of $\gamma_{n}K$ goes to zero as $n$ goes to infinity,
we have 
$\displaystyle 
y
=\lim_{n\rightarrow \infty}l_{x,y}(a_{n})
=\lim_{n\rightarrow \infty}\gamma_{n}x \in \Lambda_{X}.$

If $y\in C$ and the rank of the stabilizer group $\operatorname{Aut}(A(X))_{y}$ is 0,
by Lemma \ref{horoballs}, the point $y$ is isolated in $\partial A_{X}.$
In particular, $y$ is not in the limit set $\Lambda_{X}$.

Otherwise, $y$ is in $C_{X}$,
where  $C_{X} = \{c \in C : \operatorname{vcd} W(X)_{c} < n-1 \}.$
Then $y$ is a bounded parabolic point. 
In particular, $y$ is in the limit set $\Lambda_{X}.$
\end{proof}

\begin{defi}
Let $X$ be a K3 surface.
In Definition \ref{limit set of X}, 
we defined the limit set $\Lambda_{X}$ of $X$.
We denote the blown-up limit set of 
$\operatorname{Aut}_{s}(X)$ by $\Lambda_{X}^{bl}.$
\end{defi}

\begin{cor}\label{the blowing-up boundary of the ample cone}
If the symplectic automorphism group $\operatorname{Aut}_{s}(X)$
of a K3 surface $X$ is non-elementary, 
then $\partial A_{X}^{bl}\backslash \Lambda^{bl}_{X}= V\backslash C_{X}.$ 
\end{cor}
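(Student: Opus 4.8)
The plan is to reduce the whole statement to a comparison of strict transforms at the cusp set $C_X$, carried out fiber by fiber under the blow-down $\pi_{C_X}$. First I would record the two inputs that make this a genuine corollary. Since $\operatorname{Aut}_s(X)$ is assumed non-elementary and is geometrically finite by Theorem \ref{geometrically finiteness}, with set of cusped limit points exactly $C_X$, Lemma \ref{the blowing-up boundary equals strict transform} identifies the blown-up limit set with a strict transform,
$$\Lambda_X^{bl} = \operatorname{St}_{C_X}(\Lambda_X),$$
while $\partial A_X^{bl} = \operatorname{St}_{C_X}(\partial A_X)$ holds by definition. Theorem \ref{Baragar's result for limit sets} gives the disjoint decomposition $\partial A_X = \Lambda_X \sqcup (V \setminus C_X)$, in which, by Lemma \ref{horoballs}, every point of $V \setminus C_X$ is isolated in $\partial A_X$ and lies off $C_X$. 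Because the strict transform commutes with finite unions (it is the closure of the preimage of the complement of $C_X$), one gets $\partial A_X^{bl} = \operatorname{St}_{C_X}(\Lambda_X) \cup \operatorname{St}_{C_X}(V \setminus C_X)$, hence $\partial A_X^{bl} \setminus \Lambda_X^{bl} = \operatorname{St}_{C_X}(V \setminus C_X) \setminus \Lambda_X^{bl}$.

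Next I would analyze this difference fiber by fiber. Over a point $p \notin C_X$ the map $\pi_{C_X}$ is a homeomorphism, and here the contribution is precisely $V \setminus C_X$: each such $p$ lies in $\partial A_X^{bl}$, but being isolated in $\partial A_X$ and disjoint from both $\Lambda_X$ and $C_X$, it has a neighborhood in $\operatorname{Bl}_{C_X}(\mathbb{R}^n)$ meeting no point of $\pi_{C_X}^{-1}(\Lambda_X \setminus C_X)$, so $p \notin \Lambda_X^{bl}$. Thus over $\mathbb{R}^n \setminus C_X$ the difference is exactly $V \setminus C_X$. It then remains to show that over each $q \in C_X$ the two strict transforms coincide, i.e. $\partial A_X^{bl} \cap \pi_{C_X}^{-1}(q) = \Lambda_X^{bl} \cap \pi_{C_X}^{-1}(q)$, so that the blown-up fibers over $C_X$ contribute nothing new; the inclusion $\supset$ is immediate from $\Lambda_X \subset \partial A_X$.

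The main obstacle is precisely this fiber comparison over $q \in C_X$, and it is where the geometry of the cusp enters. For the inclusion $\subset$ I would pass to the upper half-space model with $q = \infty$, so that $\pi_{C_X}^{-1}(q)$ becomes the fiber $\pi^{-1}(e_n)$ of Corollary \ref{strict transform of subspaces}. By Proposition \ref{horopoints of Ax} the point $q$ is a horopoint of $\overline{A_X}$, so in a horoball $U$ based at $\infty$ the set $\overline{A_X} \cap U$ is the vertical prism cut out by the finitely many sides of $\overline{A_X}$ incident to $\infty$; its cross-section $D_0 \subset E^{n-1}$ is invariant under the stabilizer $\operatorname{Aut}(A(X))_q$, which by the definition of $C_X$ together with Lemma \ref{the rank of MW} is a parabolic group of rank $r = \operatorname{rk}\operatorname{MW}(f_q)$ acting cocompactly on an $r$-plane $Q = V_Q + a$. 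Cocompactness confines $D_0$ to a bounded neighborhood $N(Q, R)$, so every sequence in $\partial A_X$ tending to $q$ does so along directions lying in $V_Q$; hence by Corollary \ref{strict transform of subspaces},
$$\partial A_X^{bl} \cap \pi_{C_X}^{-1}(q) \subset \{(e_n, v/|v|) : v \in V_Q \setminus \{0\}\} = \Lambda^{bl}(\Gamma_q) = \Lambda_X^{bl} \cap \pi_{C_X}^{-1}(q),$$
the last two equalities being Lemma \ref{limit set at a parabolic point} applied to $\Gamma = \operatorname{Aut}_s(X)$, whose $q$-stabilizer $\Gamma_q$ has finite index in $\operatorname{Aut}(A(X))_q$ and shares the same invariant plane $Q$. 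Combining the two fiber computations yields $\partial A_X^{bl} \setminus \Lambda_X^{bl} = V \setminus C_X$. I expect the confinement of $D_0$ to $N(Q,R)$ — equivalently, the fact that the isolated cusps of $V \setminus C_X$ may approach $q$ only along the parabolic directions $V_Q$ — to be the delicate point, since a priori these isolated points could accumulate at $q$ transversally to $Q$ and thereby produce spurious points in the blown-up fiber.
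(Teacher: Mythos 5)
Your opening reduction is exactly the paper's proof: the paper's entire argument consists of citing Theorem \ref{Baragar's result for limit sets} to get $\partial A_{X}=\Lambda_{X}\sqcup(V\setminus C_{X})$ and Lemma \ref{the blowing-up boundary equals strict transform} to get $\Lambda_{X}^{bl}=\operatorname{St}_{C_{X}}(\Lambda_{X})$, and stops there, leaving the behaviour of $\operatorname{St}_{C_{X}}(V\setminus C_{X})$ over the exceptional fibers implicit. You correctly isolate this as the remaining issue --- whether the isolated cusps $v_{k}\in V\setminus C_{X}$ can approach a point $q\in C_{X}$ transversally to the invariant plane $Q$ and so contribute spurious points to $\pi_{C_{X}}^{-1}(q)$ --- and your fiber-by-fiber framework and the disjointness argument over $\mathbb{R}^{n}\setminus C_{X}$ are sound. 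The problem is the one step that carries all the weight: your claim that ``cocompactness confines $D_{0}$ to a bounded neighborhood $N(Q,R)$'' is a non sequitur. The stabilizer $\operatorname{Aut}(A(X))_{q}$ acts cocompactly on $Q$ (and, by bounded parabolicity, on $\Lambda_{X}\setminus\{q\}$, which is what the proof of Lemma \ref{the blowing-up boundary equals strict transform} actually uses), but it does \emph{not} act cocompactly on the cusp cross-section $D_{0}$ of the ample polyhedron, and mere $\Gamma_{q}$-invariance of a convex set does not bound it near $Q$: a Euclidean half-space of $E^{n-1}$ whose boundary hyperplane is parallel to $Q$ is convex, $\Gamma_{q}$-invariant, and transversally unbounded. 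Since $\overline{A_{X}}$ is in general strictly larger than the convex core (it contains the funnel regions over the ordinary set), no confinement of $D_{0}$ follows from the parabolic group alone, and this is precisely the configuration in which your ``delicate point'' would go wrong.

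The claim is nevertheless true, but for a different reason, one already available in the paper. The sides of $\overline{A_{X}}$ incident with $q$ are the mirrors of reflections $s_{\delta}$ with $\langle\delta,q\rangle=0$; these generate the affine Weyl group $W(X)_{q}=\oplus_{v}W_{v}$ of the associated elliptic fibration, and by the proof of Lemma \ref{the rank of MW} this group acts on the $\bigl(\sum_{v}(m_{v}-1)\bigr)$-dimensional Euclidean factor transverse to $V_{Q}$ with \emph{compact} fundamental alcove $\prod_{v}\Delta_{v}$, where $\sum_{v}(m_{v}-1)=(n-1)-r$. Hence the cross-section cut out by the incident walls has the form $D_{0}\cong\prod_{v}\Delta_{v}\times V_{Q}$, which does lie in $N(Q,R)$; the confinement is produced by the cocompactness of the Weyl part of the cusp \emph{transverse} to $Q$, complementary to the parabolic part, not by the cocompactness of $\operatorname{Aut}(A(X))_{q}$ on $Q$ itself. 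With that substitution your convexity argument (geodesics from a fixed interior point entering the horoball and forcing far-out points of $\partial A_{X}$ to be Euclidean-close to $D_{0}$) closes the gap, and in fact supplies a justification for the accumulation control that the paper's two-line proof passes over in silence.
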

\begin{proof}
By Theorem \ref{Baragar's result for limit sets},
we have $\partial A_{X}\backslash V = \Lambda (\Gamma) \backslash C_{X}.$
Then, by Lemma \ref{the blowing-up boundary equals strict transform},
we have the desired equality.
\end{proof}

Let $C$ be the set of cusped limit points of $\operatorname{Aut}(A(X)).$
By Lemma \ref{Ratcliffe_thm_12.6.5} and Lemma \ref{horopoints of Ax}, there exist cusped regions $U(c)$ based at $c \in C$
such that 
\begin{enumerate}
	\item $\{U(c) :c\in C\}$ are mutually disjoint,
	
	\item $gU(c) = U(gc)$ for any $g\in \operatorname{Aut}(A(X))$ and $c\in C,$
	
	\item $\{\overline{U(c)}\backslash \{c\}\}$ is locally finite in 
	$B^{n}\cup O(\operatorname{Aut}(A(X)))$.
	
	\item $U(c)$ meets just the sides of $\overline{A_{X}}$ incident with $c$.
\end{enumerate}
Since a cusped region $U(c)$ is homeomorphic to 
its strict transformation on $\overline{B}^{bl}$,
we identify $U(c)$ with its strict transformation.
To apply the Bestvina-Mess formula (Theorem \ref{Bestvina-Mess formula}) 
to automorphism groups of K3 surfaces,
we construct a homotopy on $\overline{A_{X}}^{bl}\backslash \cup_{c\in C}U(c)$.

\begin{lem}\label{well-definess of homotopy}
Let $a$ be a point of $A_{X}$,
and let $K(a, t) = C(a, t) \cap \overline{A_{X}}\backslash \cup_{c\in C} U(c),$
where $C(a, t)$ is the closed $t$-ball centered at $a$.
For every $x \in \overline{A_{X}}\backslash (\cup_{c\in C} U(c)\cup C),$
let $S_{x}$ be the smallest (horo)sphere (based) centered at $x$ that meets $K(a, t)$.
Then $K(a, t) \cap S_{x}$ is a singleton.
\end{lem}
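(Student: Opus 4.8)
The plan is to reinterpret the statement as the uniqueness of a nearest‐point projection and then to exploit convexity. First I would record that $K(a,t)=C(a,t)\cap\overline{A_X}\setminus\bigcup_{c\in C}U(c)$ is compact, being a closed subset of the compact hyperbolic ball $C(a,t)\subset\mathbb{H}^n_X$ (here the $U(c)$ are open horoballs, so removing them keeps $K(a,t)$ closed). For a finite point $x$ the smallest sphere $S_x$ centered at $x$ meeting $K(a,t)$ is the one of radius $\min_{K(a,t)}\rho_x$, where $\rho_x=d(x,\cdot)$, and $K(a,t)\cap S_x$ is exactly the set of minimizers of $\rho_x$; for an ideal point $x$ the same holds with $\rho_x$ replaced by the Busemann function $b_x$ and spheres by horospheres. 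So it suffices to prove that $\rho_x$ (resp. $b_x$) attains its minimum over $K(a,t)$ at a single point.

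Two convexity facts drive the argument. First, $M:=C(a,t)\cap\overline{A_X}$ is convex: $C(a,t)$ is a convex ball, and by Proposition \ref{ample cone vs Dirichlet domain} $\overline{A_X}$ is the closure of an intersection $\bigcap_{\delta}H^{+}_{s_{\delta}}$ of hyperbolic half‐spaces, hence convex. Second, in the negatively curved space $\mathbb{H}^n_X$ both $\rho_x$ and $b_x$ are strictly convex along any geodesic (excepting, for $b_x$, geodesics asymptotic to $x$, along which $b_x$ is strictly monotone). Thus if $p_1\ne p_2$ were two minimizers with common value $r$, the geodesic midpoint $m$ of $[p_1,p_2]$ would satisfy $\rho_x(m)<r$ (resp. $b_x(m)<r$), and $m\in M$ by convexity. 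If $m\in K(a,t)$ this already contradicts minimality of $r$; the only escape is that $m$ lies in one removed open horoball $U(c_0)$.

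The whole difficulty concentrates in this last case, and this is where property (4) of the cusped regions — that $U(c)$ meets just the sides of $\overline{A_X}$ incident with $c$ (Proposition \ref{horopoints of Ax}) — is essential. Passing to the upper half‐space model with $c_0=\infty$, this property forces $\overline{A_X}\cap\overline{U(c_0)}$ to be a product $D\times[h,\infty)$, where $D\subset E^{n-1}$ is cut out by the vertical walls incident to $c_0$ and is therefore Euclidean‐convex; the truncation replaces the cusp by the flat cap $D\times\{h\}$. Since $[p_1,p_2]$ crosses the convex horoball $U(c_0)$ in a subsegment, its entry and exit points lie on $\partial U(c_0)\cap M\subset K(a,t)$; if either differs from the corresponding $p_i$, strict convexity of $\rho_x$ (resp. $b_x$) along $[p_1,p_2]$ yields a point of $K(a,t)$ strictly closer than $r$, a contradiction. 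The remaining possibility is that both $p_1,p_2$ lie on the cap $\tilde D:=M\cap\{x_n=h\}$, which is convex in the Euclidean horosphere coordinate $\xi$ (an intersection of the Euclidean‐convex $D$ with the Euclidean ball $C(a,t)\cap\{x_n=h\}$).

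Here a naive midpoint argument fails, because the hyperbolic geodesic between two cap points dips into the very horoball we removed; the point is instead to minimize a monotone transform of $\rho_x$ (resp. $b_x$) that is strictly convex in $\xi$. On $\{x_n=h\}$ one has $\cosh\rho_x(\xi,h)=1+\frac{|x'-\xi|^2+(x_n-h)^2}{2x_nh}$, and, for ideal $x$ identified with $\xi_0\in E^{n-1}$, $e^{\,b_x(\xi,h)}\propto|\xi-\xi_0|^2+h^2$, both strictly convex quadratics in $\xi$. As minimizing $\rho_x$ (resp. $b_x$) over the convex cap $\tilde D$ is equivalent to minimizing these transforms, the minimizer on $\tilde D$ is unique; comparing its value with $r$ again contradicts either minimality of $r$ or the assumption $p_1\ne p_2$. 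This exhausts all cases and gives the singleton claim. I expect this last configuration — two candidate nearest points sitting on one truncating cap — to be the genuine obstacle, since it is precisely the case not handled by hyperbolic convexity alone and must be resolved through property (4) and the horosphere‐intrinsic strict convexity just described.
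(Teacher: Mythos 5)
Your proposal is correct and follows essentially the same route as the paper's own proof: both arguments use convexity of $C(a,t)\cap\overline{A_{X}}$ together with strict convexity of hyperbolic balls and horoballs to force the open geodesic segment between two putative points of $K(a,t)\cap S_{x}$ into a single horoball $U(c_{0})$ with its endpoints on $\partial U(c_{0})$, then pass to the upper half-space model with $c_{0}=\infty$ and invoke Proposition \ref{horopoints of Ax} to produce points of $K(a,t)$ on the horosphere lying strictly inside $S_{x}$, contradicting minimality. The only difference is one of explicitness: where the paper simply observes that the Euclidean segment joining the two points lies in $K(a,t)\cap\partial U(c_{0})$ (hyperbolic spheres and horospheres being Euclidean spheres in this model), you verify the same strict convexity by the quadratic formulas for $\cosh\rho_{x}$ and $e^{b_{x}}$ restricted to the horosphere, which is a sound elaboration of the identical step.
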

\begin{proof}
Suppose that there exists $y_{1}, y_{2}$ in $K(a, t)\cap S_{x}$ with $y_{1}\neq y_{2}.$
Then the hyperbolic line $\lbrack y_{1}, y_{2} \rbrack$ connecting $y_{1}$ to $y_{2}$
is contained in $C(a, t)\cap \overline{A_{X}}.$
For any $z\in (y_{1}, y_{2})$,
we have $d(x,z) < d(x, y_{1}) = d(x, y_{2}).$
Then we have $z \in \cup_{c\in C}U(c).$
Since cusped regions $U(c)$ are mutually disjoint,
there exists 
$c\in C$ such that 
$(y_{1}, y_{2}) \subset U(c)$ and $y_{1}, y_{2} \in K_{t}\cap \partial U(c).$

We now pass to the upper half-space model $U^{n}$ 
and conjugate $\operatorname{Aut}(A(X))$ so that $c=\infty.$
We denote by $\Sigma$ (resp. $w_{1}, w_{2}$) the horoball (resp. the point) on $U^n$
corresponding to $\partial U(c)$ (resp. $y_{1}, y_{2}$).
By Lemma \ref{horopoints of Ax},
the Euclidean segment $l$ connecting 
$y_{1}$ to $y_{2}$ is contained in $K(a,t)\cap \Sigma.$
This contradicts the minimality of $S_{x}$.
\end{proof}
\begin{defi}
Let $K(a,t)$ be the same as in Lemma \ref{well-definess of homotopy}.
We put $$U = \cup_{c\in C} U(c)\cup C.$$

For every $x \in \overline{A_{X}}\backslash U$ and $t\geq 0$,
we define the function 
$$\rho_{a}:\lbrack 0, \infty ) \times 
\overline{A_{X}}\backslash U \rightarrow 
\overline{A_{X}}\backslash U$$ as follows:
if $x\in K(a, t)$, we define $\rho_{a}(x, t) = x,$
otherwise, we define $\rho_{a}(x, t)$ by the unique point of $K(a, t) \cap S_{x}$.
\end{defi}
\begin{prop}\label{continuity of homotopy}
Let $\rho_{a}$ be the same as above.
Then $\rho_{a}$ is continuous.
\end{prop}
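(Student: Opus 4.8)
The plan is to recognize $\rho_a$ as a nearest‑point projection and to prove its continuity by a compactness‑and‑uniqueness argument in the spirit of Berge's maximum theorem. I introduce a single objective function
\[
F(x,z)=d(x,z)-d(x,a)\qquad (x\in\mathbb{H}^n_X,\ z\in\mathbb{H}^n_X),
\]
and extend it to the boundary by setting $F(x,z)=b_x(z)$ for $x\in\partial\mathbb{H}^n_X$, where $b_x$ is the Busemann function based at $x$ and normalized by $b_x(a)=0$. For interior $x$ the level sets $\{F(x,\cdot)=c\}$ are precisely the metric spheres centered at $x$, while for boundary $x$ they are the horospheres based at $x$; in either case the ``smallest (horo)sphere centered at $x$ meeting $K(a,t)$'' is the one through the minimizer of $F(x,\cdot)$ on $K(a,t)$. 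Using Lemma \ref{well-definess of homotopy} for the uniqueness of that minimizer, I obtain the uniform description
\[
\rho_a(x,t)=\operatorname*{argmin}_{z\in K(a,t)}F(x,z),
\]
which is valid also when $x\in K(a,t)$, since then the minimizer is $x$ itself.

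Next I would verify the three inputs of the argument. First, $K(a,t)$ is compact: it is the intersection of the compact hyperbolic ball $C(a,t)$ with the closed set $\overline{A_X}\setminus\bigcup_c U(c)$ (the union of the open horoballs being open). Second, $F$ is jointly continuous on $(\overline{A_X}\setminus U)\times K$ for every compact $K\subset A_X$; on the interior this is immediate, and the essential content is continuity across the interior‑to‑boundary transition, i.e.\ that the normalized distances $d(x_k,\cdot)-d(x_k,a)$ converge, uniformly on compacta, to $b_{x_0}$ whenever $x_k\to x_0\in\partial\mathbb{H}^n_X$, a standard fact of hyperbolic geometry. Third, the minimizer is unique, which is exactly Lemma \ref{well-definess of homotopy}.

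With these in hand continuity follows directly. Given $(x_k,t_k)\to(x_0,t_0)$, set $y_k=\rho_a(x_k,t_k)$ and $T=\sup_k t_k<\infty$; every $y_k$ lies in the compact set $K(a,T)$, so a subsequential limit $y_\infty$ exists, and from $d(a,y_k)\le t_k$ together with the closedness of $\overline{A_X}\setminus\bigcup_c U(c)$ one gets $y_\infty\in K(a,t_0)$. To identify $y_\infty$ I compare against the \emph{generic} points of $K(a,t_0)$, namely those $z\in A_X$ with $d(a,z)<t_0$ lying outside every closed horoball $\overline{U(c)}$; their complement in $K(a,t_0)$ lies in a locally finite union of hypersurfaces (the walls of $A_X$, the sphere $S(a,t_0)$, and the horospheres $\partial U(c)$), so, $K(a,t_0)$ being regular closed, the generic points are dense. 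For such $z$ one has $z\in K(a,t_k)$ for all large $k$, whence $F(x_k,y_k)\le F(x_k,z)$ passes to the limit to give $F(x_0,y_\infty)\le F(x_0,z)$; by density and continuity of $F(x_0,\cdot)$ this holds for all $z\in K(a,t_0)$. Thus $y_\infty$ minimizes $F(x_0,\cdot)$ over $K(a,t_0)$, so $y_\infty=\rho_a(x_0,t_0)$ by the uniqueness in Lemma \ref{well-definess of homotopy}. As every subsequential limit of $(y_k)$ in $K(a,T)$ equals this one point, $y_k\to\rho_a(x_0,t_0)$.

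The step I expect to be the main obstacle is the joint continuity of $F$ at the interior‑to‑boundary transition: one must ensure that as the center $x$ approaches a point of $\partial A_X$ the ``smallest metric sphere'' degenerates continuously into the ``smallest horosphere,'' that is, that the compactification topology of $\overline{A_X}$ is compatible with the horofunction picture encoded by $b_x$. A related delicate point, circumvented by the choice of generic comparison points above and ultimately resting on the convexity of $\overline{A_X}$ (Proposition \ref{ample cone vs Dirichlet domain}), the convexity and local finiteness of the horoballs, and Lemma \ref{horopoints of Ax}, is the behaviour of $K(a,t)$ near the horoball boundaries $\partial U(c)$, where a careless approximation toward $a$ could leave $K(a,t_k)$.
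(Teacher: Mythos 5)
Your proof is correct, but it takes a genuinely different route from the paper's. The paper argues by contradiction and splits into two cases according to whether the limit point $x$ lies in $B^{n}$ or in $\partial B^{n}$: in the interior case it tracks the radii $r_{n}$ of the minimal spheres and derives $r_{n}>d(x_{n},\rho_{a}(x,t))$ for large $n$, contradicting minimality; in the boundary case it passes to the upper half-space model and takes limits of the Euclidean centers and radii $(p_{k},r_{k})$ of the minimal (horo)spheres to produce a second contact point, contradicting the uniqueness in Lemma \ref{well-definess of homotopy}. Your Busemann-function formulation $\rho_{a}(x,t)=\operatorname{argmin}_{z\in K(a,t)}F(x,z)$ unifies the two cases (the standard fact that $d(x_k,\cdot)-d(x_k,a)$ converges uniformly on compacta to $b_{x_0}$ replaces the paper's explicit coordinate computation) and runs a direct Berge-type compactness-plus-uniqueness argument instead of a contradiction. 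Notably, your treatment of the moving constraint set is \emph{more} careful than the paper's: the paper's interior case implicitly compares against $\rho_{a}(x,t)$ as if it lay in $K(a,t_{n})$, which can fail when $t_{n}<t$ and $d(a,\rho_{a}(x,t))=t$; your dense ``generic'' comparison points repair exactly this. The one under-justified step on your side is the density claim itself: ``$K(a,t_{0})$ is regular closed'' is asserted, not proved, and it is not automatic --- a radial perturbation of a point $z\in\partial U(c)$ toward $a$ can enter the horoball (the geodesic can bulge into $U(c)$), so one really needs the structure supplied by Proposition \ref{horopoints of Ax}, namely that near the horosphere the sides of $\overline{A_{X}}$ incident with $c$ are vertical in the upper half-space model, to produce a direction simultaneously decreasing $d(a,\cdot)$ and leaving the horoball while staying in $\overline{A_{X}}$. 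You flag precisely this point and name the right ingredients, so I regard it as a sketched but fillable step rather than a gap; note also that your genericity condition is stronger than needed, since the horoballs $U(c)$ are fixed and only the radius constraint $d(a,z)\le t_{k}$ varies with $k$, so $d(a,z)<t_{0}$ alone suffices for the comparison.
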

\begin{proof}
If $\rho_{a}$ is not continuous, 
there exists a sequence $\{(x_{n}, t_{n})\}$ such that 
$(x_{n}, t_{n})$ converges to $(x, t)$ and
$\rho_{a}(x_{n}, t_{n})$ converges to $y\neq \rho_{a}(x, t).$

If $x$ is in $B^{n}$, passing to a subsequence of $\{(x_{n}, t_{n})\}$,
we can assume that each $x_{n}$ is in $B^{n}$.
By the definition of $\rho_{a}(x_{n}, t_{n})$ (resp. $\rho_{a}(x, t)$),
there exists $r_{n}$ (resp. $r$) such that 
$S(x_{n}, r_{n}) \cap K(a, t_{n}) = \{ \rho_{a}(x_{n}, t_{n}) \}$
(resp. $S(x, r) \cap K(a, t) = \{ \rho_{a}(x, t) \}$).
Passing to a subsequence of $(x_{n}, t_{n})$,
we can suppose that $r_{n}$ converges to $r_{0}$.
Since $d((x_{n}, t_{n}), \rho_{a}(x_{n}, t_{n}))= r_{n}$, 
we have $d(x, y) = r_{0}$. By the definition of $\rho_{a}(x, t)$,
we obtain $r_{0}>r$ and 
\begin{align*}
r_{n}- d((x_{n}, t_{n}), \rho_{a}(x,t)) 
&\geq r_{n}-d((x_{n}, t_{n}), (x, t)) -d((x, t), \rho_{a}(x,t)) \\
&= (r_{n}-r_{0}) +(r_{0}-r) -d((x_{n}, t_{n}), (x, t)).
\end{align*}
Hence, for a sufficiently large $n$,
we have $r_{n} > d((x_{n}, t_{n}), \rho_{a}(x,t))$.
This contradicts the definition of $r_{n}$.

If $x$ is in $\partial B^{n}$,
passing to the upper half-space model $U^n$,
there exists a (horo)sphere $S_{k}$ of $U^n$ (based) centered at $x_{k}$ such that 
$S_{k}\cap K(a, t_{k}) = \{\rho_{a}(t_{k},x_{k})\}.$
We denote the Euclidean center (resp. radius) of $S_{k}$ by 
$p_{k}=(p_{k,1}, \cdots, p_{k,n})$ (resp. $r_{k}$).
Then we have that $|p_{k}-\rho_{a}(t_{k}, x_{k})| = r_{k}$,
$|x_{k}-p_{k}|\leq r_{k}$, and 
$r_{k} \leq p_{k, n}$,
where $| \hspace{2mm}|$ is the Euclidean norm.
Passing to a subsequence of $\{(x_{n}, t_{n})\}$, we can assume that 
$p_{k}$ (resp. $r_{k}$) converges to some point $p=(p_{1}, \cdots, p_{n})$ (resp. $r\geq 0$).
As $k$ goes to infinity, we have 
$$|p-y| = r,\hspace{2mm}
|x-p| \leq r,\hspace{2mm}{\text{and }}
r \leq p_{n}.$$
Since $x$ is in $\partial B^{n}$, 
we have that $p=(x_{1}, \cdots, x_{n-1}, r)$ and 
$|x-p|=r$. Since $|p-y|=r$, this contradicts the uniqueness of $\rho_{a}(x,t)$.
\end{proof}

\begin{prop}\label{extends a homotopy}
$\rho_{a}$ extends to  $\overline{\rho_{a}}:\lbrack 0, \infty ) \times
\overline{A_{X}}^{bl} \backslash\cup_{c\in C} U(c)
\rightarrow
\overline{A_{X}}^{bl} \backslash\cup_{c\in C} U(c)
$.
\end{prop}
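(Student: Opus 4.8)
The plan is to define $\overline{\rho_a}$ in two pieces. First observe that for every $t$ the set $K(a,t)=C(a,t)\cap\overline{A_X}\backslash\cup_{c\in C}U(c)$ is a compact subset of the interior $\mathbb{H}^n_X$, so each value $\rho_a(x,t)$ is an interior point and has a unique preimage under the blow-up projection $\pi_C$. Away from the exceptional fibres $\pi_C$ is a homeomorphism, so I would set $\overline{\rho_a}([x],t)$ to be the unique lift of $\rho_a(\pi_C([x]),t)$; this is continuous by Proposition \ref{continuity of homotopy}. It then remains only to define $\overline{\rho_a}$ on the points lying over the blown-up cusps $c\in C_X$ and to verify continuity there.

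Fix such a $c$ and, as in Lemma \ref{limit set at a parabolic point}, pass to the upper half-space model with $c=\infty$, so that $U(c)$ is the horoball $\{x_n>h\}$ and, by property (4) of the cusped regions (Proposition \ref{horopoints of Ax}), $K(a,t)$ is capped near the cusp by a compact piece $\widetilde K_t$ of the horosphere $\partial U(c)=\{x_n=h\}$. By Corollary \ref{strict transform of subspaces} a point $\xi$ of the exceptional fibre surviving in $\overline{A_X}^{bl}\backslash\cup U(c)$ is recorded by a direction $v/|v|$, with $v$ in the $r$-plane $V$ preserved by the stabiliser $\operatorname{Aut}(A(X))_c$ as in Lemma \ref{limit set at a parabolic point}, and by Lemma \ref{strict transform of lines} a sequence $x_n$ with $[x_n]\to\xi$ is exactly one escaping to $\infty$ with $x_n/|x_n|\to v/|v|$. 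I would then define $\overline{\rho_a}(\xi,t)=\lim_n\rho_a(x_n,t)$.

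To see that this limit exists and is independent of the sequence I would track, as in the boundary case of Proposition \ref{continuity of homotopy}, the Euclidean centre $p_n$ and radius $r_n$ of the smallest horosphere $S_{x_n}$ with $S_{x_n}\cap K(a,t)=\{\rho_a(x_n,t)\}$. Writing the first-contact radius of a point $(z,s)\in K(a,t)$ as $(|z-x_n|^2+s^2)/2s$, the leading behaviour as $x_n\to\infty$ forces the minimiser onto the maximal-height locus — that is, onto the cap $\widetilde K_t$, since the horoball above it has been removed — while the next order selects the point of $\widetilde K_t$ extremal in the direction determined by $v$. Thus $\overline{\rho_a}(\xi,t)$ is the support point of $\widetilde K_t$ in that direction, a single point by the same convexity argument as in Lemma \ref{well-definess of homotopy}; in particular the limit depends only on $\xi$ and lies in the interior, where it again has a unique lift. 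Continuity of $\overline{\rho_a}$ across and into the exceptional fibres would then follow by combining this directional convergence with Proposition \ref{continuity of homotopy}: given $(\xi_n,t_n)\to(\xi,t)$ I would choose representatives diagonally and reduce to the same centre-and-radius estimate used there.

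The main obstacle is precisely this last step: showing that the smallest-horosphere construction defining $\rho_a$ is compatible with the directional bookkeeping of the blow-up, so that sequences approaching one blown-up boundary point — however different their behaviour inside $\mathbb{H}^n_X$ — share a single limiting retraction value that varies continuously with the direction. The capping property of Proposition \ref{horopoints of Ax} is what makes this work: it guarantees that near $c$ the region $K(a,t)$ is a genuine horospherical cap, so that the extremal point is governed purely by $v$ and cannot drift along a neighbouring cusp, while Lemmas \ref{strict transform of lines} and \ref{strict transform of subspaces} convert convergence in $\overline{A_X}^{bl}$ into controlled escape to the cusp in a fixed direction.
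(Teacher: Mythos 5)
Your proposal is correct in substance and rests on the same geometric mechanism as the paper's proof, but the construction is genuinely different. The paper defines $\overline{\rho_{a}}$ explicitly: with the cusp at $\infty$ and $a=e_{n}$ in the upper half-space model, it takes the half-line $l_{x}$ on the horosphere $\Sigma=\partial U(c)$ starting at $re_{n}$ in the direction recorded by the blow-up point, observes that $\rho_{a}(\cdot, t)$ sends all sufficiently distant points into $\Sigma$, and declares $\overline{\rho_{a}}(x,t)$ to be the point where $l_{x}$ leaves $K(a,t)$; well-definedness is then immediate and all the work is in continuity. You instead define $\overline{\rho_{a}}(\xi,t)$ as a sequential limit and identify it variationally, via the asymptotics of the first-contact radius, as the support point of the horospherical cap $\widetilde{K}_{t}$ in the direction $v$. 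The two definitions do agree, but for a reason you assert rather than prove: since by Proposition~\ref{horopoints of Ax} the sides of $\overline{A_{X}}$ meeting $U(c)$ are vertical, $\overline{A_{X}}\cap\Sigma$ is a Euclidean-convex set containing $re_{n}$, and any direction $v$ arising from a point of $\overline{A_{X}}^{bl}$ over $c$ is a \emph{recession direction} of this set (limit directions of an unbounded sequence in a convex set lie in its recession cone); hence the support point of $\widetilde{K}_{t}=C(a,t)\cap\overline{A_{X}}\cap\Sigma$ in direction $v$ is the unique support point $re_{n}+R\,v/|v|$ of the Euclidean ball $C(a,t)\cap\Sigma$ centered at $re_{n}$, i.e.\ exactly the paper's ray-exit point. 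Without the recession property the maximizer could sit on a flat face of $\overline{A_{X}}\cap\Sigma$ and fail to be unique, so your appeal to ``the same convexity argument as in Lemma~\ref{well-definess of homotopy}'' needs this extra line --- that lemma uses strict convexity of spheres and horoballs, which by itself does not rule out a flat face here. Two smaller points: your contact-radius formula $(|z-x_{n}|^{2}+s^{2})/2s$ is the horoball case ($x_{n}\in E^{n-1}$); sequences through interior points require the parallel computation with $\cosh d(x_{n},z)=1+\bigl(|z'-u_{n}|^{2}+(s-h_{n})^{2}\bigr)/(2sh_{n})$, which has the same leading behaviour, and your argument silently covers the case $C(e_{n},t)\cap\overline{A_{X}}\cap U(c)=\varnothing$ (which the paper treats separately), since the height-maximizer is then the unique top point of $C(e_{n},t)$ on the axis $\mathbb{R}_{>0}e_{n}\subset\overline{A_{X}}$. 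In sum: the paper's explicit formula buys instant well-definedness at the price of a terse continuity check, while your limit definition builds sequential continuity into the construction at the price of an existence/uniqueness argument for the limit; once the recession-direction observation is supplied, your route closes to a complete proof.
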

\begin{proof}
Let $x$ be a point in $\overline{A_{X}}^{bl} \backslash\cup_{c\in C} U(c)$ 
with $\pi_{C}(x)=c.$
We now pass to the upper half-space model $U^n$ so that $a=e_{n}$ and $c=\infty$
by an isometry $\phi: B^{n}\rightarrow U^{n}.$
Let $\Sigma$ be the horosphere $\{x\in U^n: x_{n}=r\}$
based at $\infty$ corresponding to $U(c).$

If $C(e_{n}, t)\cap \overline{A_{X}} \cap U(c) = \varnothing,$
we define $\overline{\rho_{a}}(x, t)$ 
by the intersection $S(e_{n}, t)$ with 
$\mathbb{R}_{>0}e_{n}.$
If $C(e_{n}, t)\cap \overline{A_{X}} \cap U(c) \neq \varnothing,$
there exists the half-line $l_{x}$ on $\Sigma$ starting at $re_{n}$ such that
$\operatorname{St}_{C}(\phi^{-1} \circ l)\cap \pi_{C}^{-1}(c) = \{x\}.$
For any $t\geq 0$, 
there exists a sufficiently large $R_{t}>0$
such that $\rho_{a}(s, x)$ is in $\Sigma$ 
for any $s\in (0, t\rbrack$ and any $x$ with $|x|>R_{t}$.
Then, if we define $\overline{\rho_{a}}(t, x)$ by the intersection of $K(a, t)$ with $l_{x},$ 
this is the desired extension of $\rho_{a}.$
\end{proof}

\begin{thm}\label{Main result}
If $\operatorname{Aut}_{s}(X)$
is non-elementary,
then $(\overline{A_{X}}^{bl}\backslash\cup_{c\in C} U(c), \partial A_{X}^{bl})$ is a $\mathcal{Z}$-structure
for $\operatorname{Aut}(X).$
In particular,
$$\operatorname{vcd}(\operatorname{Aut}(X)) = \dim \partial A_{X}^{bl} + 1.$$

\end{thm}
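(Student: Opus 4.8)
The plan is to check the three axioms defining a $\mathcal{Z}$-structure for the pair $(\overline{X},Z)$, where $\overline{X}:=\overline{A_X}^{bl}\setminus\bigcup_{c\in C}U(c)$ and $Z:=\partial A_X^{bl}$, and then to read off the dimension formula from the Bestvina--Mess formula (Theorem \ref{Bestvina-Mess formula}), which applies because $\operatorname{Aut}(X)$ is virtually torsion free. Because the homomorphism $\rho\colon\operatorname{Aut}(X)\to\operatorname{Aut}(A(X))$ has finite kernel and cokernel by Theorem \ref{aut1}, and because $\operatorname{vcd}$ is a commensurability invariant by Serre's theorem (Theorem \ref{26}), it is enough to produce a geometric action on $X=\overline{X}\setminus Z$, the finite part $\overline{A_X}^{H}\setminus\bigcup_{c\in C}U(c)$, of $\operatorname{Aut}(A(X))$---equivalently of the finite-index subgroup $\operatorname{Aut}_s(X)$. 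The finite kernel of $\rho$ affects neither properness nor cocompactness, so the resulting formula transfers to $\operatorname{Aut}(X)$.

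First I would show that $\overline{X}$ is a compact AR. For compactness I would realize $\overline{A_X}^{bl}=\operatorname{St}_{C_X}(\overline{A_X})$ as a closed subset of the strict transform of the compact ball $\overline{B^n}$ at $C_X$; the latter is the projective limit of the blow-ups $\operatorname{Bl}_{\mathcal{P}}(\overline{B^n})$ over finite subsets $\mathcal{P}\subset C_X$, each of which is a blow-up of a compact ball at finitely many points and hence compact, so the limit is compact and so is its closed subset $\overline{A_X}^{bl}$. Deleting the open horocusped regions $U(c)$ leaves a closed, therefore compact, subset. The isolated rational points $V\setminus C_X$ of Corollary \ref{the blowing-up boundary of the ample cone} survive as isolated boundary points and pose no obstruction to compactness. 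That $\overline{X}$ is an ANR I would verify locally: away from $\partial A_X^{bl}$ it is a truncated hyperbolic polytope, and along the blown-up cusps the explicit models of Lemma \ref{strict transform of lines} and Corollary \ref{strict transform of subspaces} describe a neighborhood as the product of a polytope with a cone over the sphere $S^{r-1}$ attached to the rank-$r$ parabolic stabilizer, which is again an ANR.

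Contractibility of $\overline{X}$ and the $\mathcal{Z}$-set property of $Z$ I would then obtain simultaneously from the extended homotopy $\overline{\rho_a}$ of Proposition \ref{extends a homotopy}. Reparametrizing its radius variable so that the endpoint $\infty$ corresponds to time $0$ and the endpoint $0$ to time $1$, one obtains a homotopy $F\colon\overline{X}\times[0,1]\to\overline{X}$ with $F_0=\operatorname{id}_{\overline{X}}$ and $F_1\equiv a$; its continuity is precisely the content of Propositions \ref{continuity of homotopy} and \ref{extends a homotopy}, while Lemma \ref{horopoints of Ax} (a horocusp meets only the incident sides) guarantees compatibility with the blown-up directions. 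This already exhibits $\overline{X}$ as contractible, hence as an AR. Moreover, for every $s>0$ the image $F_s(\overline{X})$ is contained in $K(a,t)=C(a,t)\cap\overline{A_X}\setminus\bigcup_{c\in C}U(c)$ for some finite radius $t$, and this set lies entirely in the finite part $X$ because $C(a,t)$ is a bounded hyperbolic ball while $\partial A_X^{bl}$ sits at infinity; thus $X$ is homotopy dense in $\overline{X}$, which is exactly the assertion that $Z$ is a $\mathcal{Z}$-set. The single-valuedness of $K(a,t)\cap S_x$ used here is Lemma \ref{well-definess of homotopy}.

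It remains to produce the proper metric and the action axioms. On $X$ I would use the restriction of the hyperbolic metric of $\mathbb{H}^n_X$; it is proper because $X=\overline{A_X}^{H}\setminus\bigcup_{c\in C}U(c)$ is closed in $\mathbb{H}^n_X$, and $\operatorname{Aut}(A(X))$ acts by isometries, properly discontinuously with finite stabilizers since it is discrete. Cocompactness follows from Lemma \ref{generalized polytope2}: modulo $\operatorname{Aut}(A(X))$ the space $\overline{A_X}^{H}$ is the Dirichlet polytope $\overline{P}$, whose truncation $\overline{P}\setminus\bigcup_{c\in C}U(c)$ is compact by the geometric finiteness of Theorem \ref{geometrically finiteness}. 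The step I expect to be the main obstacle is the remaining axiom: for each compact $K\subset X$ and each open cover $\mathcal{U}$ of $\overline{X}$, all but finitely many translates $gK$ must lie inside a single member of $\mathcal{U}$. This is a convergence-type shrinking of orbits toward the boundary, and it is delicate near the cusps, where parabolic translates do not shrink in the naive Euclidean sense and one must instead see that they become small in the blown-up topology. For this I would use the mutually disjoint, locally finite, equivariant family of cusped regions furnished by Theorem \ref{Ratcliffe_thm_12.6.5} together with Lemma \ref{limit set at a parabolic point}, which identifies the blown-up limit set at each cusp with the sphere $S^{r-1}$ of its parabolic stabilizer, so that the orbit images genuinely refine $\mathcal{U}$ in the resolved boundary. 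Once this axiom is verified, $(\overline{X},Z)$ is a $\mathcal{Z}$-structure and Theorem \ref{Bestvina-Mess formula} yields $\operatorname{vcd}(\operatorname{Aut}(X))=\dim\partial A_X^{bl}+1$.
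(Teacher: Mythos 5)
Your proposal is correct and takes essentially the same route as the paper: the paper's proof likewise reparametrizes the retraction $\rho_{a}$ (Propositions \ref{continuity of homotopy} and \ref{extends a homotopy}) into a homotopy $H$, concludes that $\overline{A_{X}}^{bl}\backslash\bigcup_{c\in C} U(c)$ is an AR with $\partial A_{X}^{bl}$ a $\mathcal{Z}$-set, and then invokes Theorem \ref{Bestvina-Mess formula}. The verifications you spell out in addition (compactness of the blown-up space, the proper cocompact action via Lemma \ref{generalized polytope2} and Theorem \ref{geometrically finiteness}, the null-sequence axiom near the cusps, and the transfer from $\operatorname{Aut}_{s}(X)$ to $\operatorname{Aut}(X)$ via Theorem \ref{aut1} and Serre's theorem) are exactly the steps the paper leaves implicit, so yours is a fleshed-out version of the same argument.
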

\begin{proof}
We define the homotopy $H:\lbrack 0, 1\rbrack \times
\overline{A_{X}}^{bl} \backslash\cup_{c\in C} U(c)
\rightarrow
\overline{A_{X}}^{bl} \backslash\cup_{c\in C} U(c)$ setting
$$ H(t, x) = 
\left\{
\begin{array}{ll} 
(t, x)  &  (t=1) \\
\rho_{a}(\log{\frac{t+1}{t-1}}, x) & (t\neq 1)
\end{array}
\right.$$
By Proposition \ref{extends a homotopy} and the definition of $\rho_{a}$,
$H$ is continuous.
Hence 
$\overline{A}_{X}^{bl}\backslash \cup_{c\in C} U(c)$ is an AR,
and 
$\partial A_{X}^{bl}$ is a $\mathcal{Z}$-set of $\overline{A}_{X}^{bl}\backslash \cup_{c\in C} U(c)$.
Then $(\overline{A}_{X}^{bl}\backslash \cup_{c\in C} U(c), \partial A_{X}^{bl})$
is a $\mathcal{Z}$-structure for $\operatorname{Aut}_{s}(X)$ (resp. $\operatorname{Aut}(X)$).
By Theorem \ref{Bestvina-Mess formula}, we have the desired equality.
\end{proof}


\begin{cor}\label{a cantor set}
If $\Lambda_{X}$ is homeomorphic to a Cantor set and $X$ has an elliptic fibration
whose rank is larger than $0$,
then $$\operatorname{vcd}(\operatorname{Aut}(X)) 
= \max \operatorname{MW}(f:X\rightarrow \mathbb{P}^{1}),$$
where $f$ runs all the elliptic fibrations of $X$.
\end{cor}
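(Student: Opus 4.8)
The plan is to compute the covering dimension of $\partial A_X^{bl}$ and feed it into Theorem \ref{Main result}, which gives $\operatorname{vcd}(\operatorname{Aut}(X)) = \dim \partial A_X^{bl}+1$. First note that a Cantor set is perfect and infinite, so $\Lambda_X$ is infinite and $\operatorname{Aut}_s(X)$ is non-elementary; hence Theorem \ref{Main result} and Corollary \ref{the blowing-up boundary of the ample cone} apply. I would then use Corollary \ref{the blowing-up boundary of the ample cone} together with Lemma \ref{horoballs} to write $\partial A_X^{bl} = \Lambda_X^{bl} \cup (V\setminus C_X)$, where the points of $V\setminus C_X$ are isolated in $\partial A_X^{bl}$. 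Since adjoining isolated points cannot raise the covering dimension, it suffices to compute $\dim \Lambda_X^{bl}$.

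Next I would analyze $\Lambda_X^{bl}$ through the blow-down map $\pi_C : \Lambda_X^{bl} \to \Lambda_X$. By Lemma \ref{limit set at a parabolic point}, the fiber over a cusped point $c \in C_X$ is a sphere $S^{r_c-1}$, where $r_c$ is the rank of the maximal parabolic stabilizer at $c$, whereas over every non-cusped limit point the fiber is a single point. The hypothesis that $\Lambda_X$ is a Cantor set gives $\dim \Lambda_X = 0$, since a compact totally disconnected metric space is zero-dimensional. Moreover $\Lambda_X^{bl}$ is compact, being a closed subset of the compact space $\partial B^{bl}$ realized as an inverse limit of blow-ups at finite subsets; as $\Lambda_X$ is Hausdorff, the restriction of $\pi_C$ is a closed map.

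The dimension count is the heart of the argument. For the upper bound I would invoke the Hurewicz dimension-raising theorem for closed maps: since $\dim \Lambda_X = 0$ and every fiber of $\pi_C$ has dimension at most $\max_{c\in C_X}(r_c-1)$, one obtains $\dim \Lambda_X^{bl} \leq \max_{c\in C_X}(r_c-1)$. For the matching lower bound I would take the cusped point $c$ realizing the maximal rank and observe that $S^{r_c-1} = \pi_C^{-1}(c)\cap \Lambda_X^{bl}$ is a closed subset; since covering dimension is monotone on closed subsets, $\dim \Lambda_X^{bl} \geq \max_{c\in C_X}(r_c-1)$. Hence $\dim \partial A_X^{bl} = \max_{c\in C_X}(r_c-1)$, where the hypothesis of a positive-rank fibration guarantees this maximum is at least $0$.

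Finally I would translate stabilizer ranks into Mordell-Weil ranks. By Proposition \ref{P_vs_MW} each maximal parabolic subgroup corresponds, up to $W(X)$-conjugacy, to an elliptic fibration $f_e$ with $r_c = \operatorname{rk}\operatorname{MW}(f_e)$, and conversely every elliptic fibration arises in this way; thus $\max_{c\in C_X} r_c = \max_f \operatorname{rk}\operatorname{MW}(f)$, the maximum being positive by hypothesis. Combining, $\dim \partial A_X^{bl} = \max_f \operatorname{rk}\operatorname{MW}(f) - 1$, and Theorem \ref{Main result} yields $\operatorname{vcd}(\operatorname{Aut}(X)) = \max_f \operatorname{rk}\operatorname{MW}(f)$. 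The main obstacle is precisely the dimension count: one must verify that blowing up a zero-dimensional limit set along the parabolic fibers creates no dimension beyond that of the largest replacement sphere, which is exactly where the closedness of $\pi_C$ and the Hurewicz estimate, rather than a naive product-dimension bound, are essential.
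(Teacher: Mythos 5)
Your proposal is correct and follows essentially the same route as the paper: the paper's proof likewise identifies the fibers of $\pi_{C}$ over cusped points as spheres $S^{r-1}$ via Lemma \ref{limit set at a parabolic point}, converts stabilizer ranks into Mordell--Weil ranks via Proposition \ref{P_vs_MW}, handles the remaining points using total disconnectedness of $\Lambda_{X}$, and concludes with Theorem \ref{Main result}. The only difference is one of rigor, not of strategy: where the paper simply asserts that the topological dimension of $\Lambda^{bl}_{X}$ equals the maximal Mordell--Weil rank minus one, you justify the upper bound by the Hurewicz dimension-raising theorem for the closed map $\pi_{C}$ and the lower bound by monotonicity of covering dimension on closed subsets, which is a welcome tightening of the paper's terse dimension count.
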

\begin{proof}
For any $c\in C$, we have
$\Lambda^{bl}_{X} \cap \pi_{C}^{-1}(c) \cong S^{r-1},$
where $r$ is the rank of $\operatorname{Aut}(A(X))_{c}$.
By Proposition \ref{P_vs_MW},
$r$ is equal to the rank of the Mordell weil group corresponding to $c$.
If $c \notin C$, since $\Lambda_{X}$ is totally disconnected, 
$\{c\}$ itself is an open neighborhood.
Then the topological dimension of $\Lambda^{bl}_{X}$ is equal to 
$\max \operatorname{MW}(f:X\rightarrow \mathbb{P}^{1})$.
\end{proof}

\begin{ex}
Let $X$ be the K3 surface whose N\'{e}ron-Severi lattice $NS(X)$ is given by 
$$NS(X) =
\begin{pmatrix}
2 & 4  & 1   \\
4 & 2  & 0   \\
1 & 0  & -2\\
\end{pmatrix}.
$$

Baragar \cite{Baragar3} studied the automorphism group of this K3 surface and the growth of orbits of curves.
He showed that $\Lambda_{X}$ is a Cantor set and $\operatorname{Aut}(A(X))$ is isomorphic to
$\mathbb{Z}/2\mathbb{Z} *\mathbb{Z}/2\mathbb{Z} *\mathbb{Z}/2\mathbb{Z}.$

Although the automorphism group of $X$ is completely determined in \cite{Baragar3} as above
and then this is not a new example of computing the virtual cohomological dimensions,
we can also apply Corollary \ref{a cantor set} to this K3 surface $X$.
\end{ex}

\subsection{The ample cones of K3 surfaces and sphere packings}
Let $X$ be a K3 surface,
and
let $\phi: \mathbb{H}_{X}^{n} \rightarrow B^n$
be an isometry from $\mathbb{H}_{X}^{n}$ to the conformal ball model $B^n$.
The isometry $\phi$ induces the natural homeomorphism from 
$\partial \mathbb{H}_{X}^{n}$
to $\partial B^n.$
For a $(-2)$-root $\delta$ and an ample class $a$ of $X$,
we set
\begin{align*}
H_{\delta}&=\{x \in \mathbb{H}_{X}^{n}: d(x,a)=d(x, s_{\delta}a)\}, \\
H_{\delta}^{+}&= \{x \in \mathbb{H}_{X}^{n}: d(x,a)<d(x, s_{\delta}a)\}.
\end{align*}
The closure of $H_{\delta}$ (resp. $H_{\delta}^{+}$) in $\overline{\mathbb{H}_{X}^n}$
is denoted by $\overline{H_{\delta}}$ (resp. $\overline{H}_{\delta}^{+}$).
We set 
$$\mathcal{S}_{X}=
\lbrace \delta \in \Delta^{+}(X): 
H_{\delta}\cap \overline{A_{X}}^{H} 
{\text{ is a side of }} \overline{A_{X}}^{H} \rbrace,$$
where $\overline{A_{X}}^{H}$ is the closure of $A_{X}$ in $\mathbb{H}_{X}^{n}.$

\begin{defi}
Let $\delta$ be an element of $\mathcal{S}_{X}$.
We define the {\it{sphere associated with $\delta$}} by 
$S_{\delta}= \phi(\overline{H_{\delta}})\cap \partial B^{n}$,
which is isomorphic to the $(n-2)$-sphere.
We define 
$B_{\delta}=\phi(\overline{H}_{\delta}^{+}\backslash \overline{H_{\delta}})\cap \partial B^n.$
We call $B_{\delta}$ the {\it{open ball associated with $\delta$}}.
\end{defi}

By Proposition \ref{ample cone vs Dirichlet domain}, 
we have 
$ A_{X} = \cap_{\delta \in \mathcal{S}_{X}} H_{\delta}^{+}$
and
$\partial A_{X} = \cap_{\delta \in \mathcal{S}_{X}} \overline{B_{\delta}},$
where $\overline{B_{\delta}}$ is the closure of $B_{\delta}$ in $\partial B^n.$

\begin{defi}
The boundary $\partial A_{X}$ associated with the ample cone of $X$ 
is a {\it{sphere packing}}
if for any two different $\delta_{1}$, $\delta_{2}$ in $\mathcal{S}_{X}$, their associated spheres $S_{\delta_{1}}$ and $S_{\delta_{2}}$
are either disjoint or tangent to each other.

$\partial A_{X}$ is called a {\it{connected sphere packing}}
if $\partial A_{X}$ is a sphere packing,
and for any two different $\delta$, $\delta^{'}$ in $\mathcal{S}_{X}$,
there exist $(-2)$-roots $\delta_{1}, \dots, \delta_{n}$ in $\mathcal{S}_{X}$ such that
$\delta=\delta_{1}$, $\delta^{'}=\delta_{n}$,
and two spheres $S_{\delta_{i}}$ and $S_{\delta_{i+1}}$ are tangent for $i=1, \dots, n-1.$
\end{defi}

\begin{thm}\label{Sphere packings}
Let $X$ be an elliptic K3 surface.
If $\partial A_{X}$ is a connected sphere packing,
and
every elliptic divisor of $X$ corresponds to 
the tangent point of some two spheres associated with elements of $\mathcal{S}_{X}$,
then
$$
\operatorname{vcd}(\operatorname{Aut}(X))
=\max \{ \operatorname{rk}\operatorname{MW}(f)\}
=\rho(X)-3
,$$
where $f$ runs all elliptic fibrations of $X$.
\end{thm}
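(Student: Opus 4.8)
The plan is to reduce everything to Theorem \ref{Main result}, which gives $\operatorname{vcd}(\operatorname{Aut}(X)) = \dim \partial A_X^{bl} + 1$ as soon as $\operatorname{Aut}_s(X)$ is non-elementary; this is automatic here, since a connected sphere packing with tangencies has perfect limit set and forces infinitely many walls. Granting this, both asserted equalities follow from two independent computations: an arithmetic one showing $\max_f \operatorname{rk}\operatorname{MW}(f) = \rho(X) - 3$, and a topological one showing $\dim \partial A_X^{bl} = \rho(X) - 4$. Indeed these combine to $\operatorname{vcd}(\operatorname{Aut}(X)) = (\rho(X)-4)+1 = \rho(X)-3 = \max_f \operatorname{rk}\operatorname{MW}(f)$.

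For the arithmetic computation I would first note that every tangent point of the packing is an elliptic divisor: two tangent walls $H_{\delta_1},H_{\delta_2}$ share a unique ideal point fixed by the parabolic isometry $s_{\delta_1}s_{\delta_2}$, hence a rational isotropic ray. By hypothesis the converse also holds, so the elliptic divisors are exactly the tangent points. Fixing such a point $e$ and passing to the upper half-space model $U^n$ with $e=\infty$, the tangent spheres become parallel vertical hyperplanes; since the packing condition forbids two walls through $e$ from meeting anywhere else, all walls through $e$ are mutually tangent at $e$, i.e. parallel in $U^n$. Thus $W(X)_e$ is a rank-one affine reflection group of type $\tilde A_1$, so $\operatorname{vcd}W(X)_e = 1$, and Lemma \ref{the rank of MW} gives $\sum_v(m_v-1)=1$. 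Shioda--Tate (Lemma \ref{9}) then yields $\operatorname{rk}\operatorname{MW}(f_e)=\rho(X)-3$. As every elliptic fibration arises from such an $e$, all of them have this rank, so the maximum equals $\rho(X)-3$.

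For the topological computation I would bound $\dim \partial A_X^{bl}$ from both sides. The lower bound is immediate: by Lemma \ref{limit set at a parabolic point} the fibre of the blow-up over any cusp $e$ is a sphere $S^{r-1}$ with $r=\operatorname{rk}\operatorname{MW}(f_e)=\rho(X)-3$, and this closed $(\rho(X)-4)$-sphere lies in $\partial A_X^{bl}$. For the upper bound I would use that every tangent point is a cusp in $C_X$ (parabolic, rational, of positive rank), so that forming $\partial A_X^{bl}$ blows up all tangencies at once, severing every connection of the connected packing. Each wall sphere $S_\delta\cong S^{\rho(X)-3}$ meets $\partial A_X$ in the residual set of the packing induced on $S_\delta$ by the spheres tangent to it, a set of dimension $\rho(X)-4$; after blowing up the tangencies these pieces, together with the cusp fibres $S^{\rho(X)-4}$, cover $\partial A_X^{bl}$ by a locally finite family of closed sets each of dimension $\rho(X)-4$, the local finiteness being furnished by Theorem \ref{Ratcliffe_thm_12.6.5}. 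The countable sum theorem for covering dimension then gives $\dim \partial A_X^{bl}\le \rho(X)-4$; combined with Corollary \ref{the blowing-up boundary of the ample cone}, which shows $\partial A_X^{bl}\setminus\Lambda_X^{bl}=V\setminus C_X$ consists of isolated points, the two bounds yield equality.

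The main obstacle is the upper bound in the last paragraph. Before the blow-up the residual set $\partial A_X$ of a connected packing is genuinely connected, and its intersection with a wall sphere is a fractal residual set rather than a smooth face, so one must verify that (i) each such intersection really has dimension $\rho(X)-4$, which is where the recursive, self-similar structure of the packing enters, and that (ii) blowing up at all tangencies makes the resulting family transversally totally disconnected, so that the cusp-fibre dimension $\rho(X)-4$ is not exceeded. Establishing (i) rigorously will require an inductive analysis of the lower-dimensional packings induced on the wall spheres, and (ii) a careful application of the sum and decomposition theorems of dimension theory to the locally finite cover.
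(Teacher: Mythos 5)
Your arithmetic half is essentially correct and in fact fleshes out what the paper leaves terse: at a tangent point all incident walls pass through the same ideal point, hence are pairwise tangent there (disjointness elsewhere being forced by the packing condition), so $\operatorname{vcd}W(X)_{e}=1$, and Lemma \ref{the rank of MW} together with Shioda--Tate gives $\operatorname{rk}\operatorname{MW}(f_{e})=\rho(X)-3$ for every elliptic fibration; this is exactly how the paper's proof produces the lower bound in its last line. The genuine gap is in your topological upper bound, and it begins with a concretely false claim: each wall sphere $S_{\delta}$ does \emph{not} meet $\partial A_{X}$ in a $(\rho(X)-4)$-dimensional residual set. Since any two packing spheres are disjoint or tangent, none of the complementary open balls meets $S_{\delta}$, so $S_{\delta}\subseteq\partial A_{X}$ in its entirety --- a closed subset of dimension $\rho(X)-3$. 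Your proposed locally finite closed cover therefore consists of $(\rho(X)-3)$-dimensional pieces, and the countable sum theorem can only give $\dim\partial A_{X}^{bl}\leq\rho(X)-3$, i.e.\ $\operatorname{vcd}\leq\rho(X)-2$, which is not the theorem. Any dimension drop must come from the blow-up itself, and for that one needs the cusps to be \emph{dense} in each $S_{\delta}$ (so that $S_{\delta}\setminus C$ has empty interior in $S_{\delta}$ and hence dimension $\leq\rho(X)-4$); you neither assume nor prove this density, and even granting it (it can be extracted from the hypothesis that every elliptic divisor is a tangent point, since once one rational isotropic ray lies on the rational wall $H_{\delta}$ --- supplied by connectedness --- such rays are dense in $S_{\delta}$), one must still control the deep residual points of $\partial A_{X}$ lying on no wall sphere, where the pieces are no longer closed and the sum theorems do not apply. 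These are precisely the obstacles (i)--(ii) you flag at the end, so the proposal is a program, not a proof, and its stated geometric input is wrong as it stands.

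It is worth noting that the paper's proof avoids covering dimension entirely, which is why it does not run into any of this. It realizes $\partial A_{X}^{bl}$ as the inverse limit of the strict transforms $F_{m}^{bl}$ of the partial intersections $F_{m}=\bigcap_{i=1}^{m}\overline{B_{i}}$, uses the homotopy equivalence $F_{m}^{bl}\simeq U_{m}$ with the open sets $U_{m}=\bigcap_{i=1}^{m}B_{i}$, and kills $H^{*}(U_{m})$ for $*\geq n-2$ by an inductive Mayer--Vietoris argument in which the connected ordering of the balls is essential: each new ball being tangent to the earlier union makes $U_{m+1}\cap V_{m+1}$ a punctured $(n-2)$-sphere, homotopy equivalent to a wedge of copies of $S^{n-3}$, so the top class never survives. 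The conclusion then comes through the cohomological form of the Bestvina--Mess formula, $\operatorname{vcd}\operatorname{Aut}(X)=\max\{m:H^{m}(\partial A_{X}^{bl},\mathbb{Z})\neq 0\}+1\leq\rho(X)-3$, rather than through $\dim\partial A_{X}^{bl}$ itself. If you wish to salvage your direct dimension-theoretic route, you would need to prove the density of tangent points on each wall sphere and then carry out a nontrivial inverse-limit dimension computation for the strict transform of $S_{\delta}$ along a dense countable set; the paper's \v{C}ech-cohomological argument sidesteps both difficulties.
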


\begin{proof}
As $\partial A_{X}$ is connected,
we can sort 
the set of the open balls associated with elements of $\mathcal{S}_{X}$ to 
$\{ B_{i}\}_{i \in \mathbb{N}}$
such that 
for any $i\in \mathbb{N}$, 
the intersection of $B_{i}$ with $\cup_{j=1}^{i-1}B_{j}$
is not empty.
Let $\mathcal{P}_{m}$ be 
the set 
$\{p : p \in  B_{i} \cap B_{j} {\text{ for some two distinct }} i, j \leq m\}.$
We define the following sets:

\begin{itemize}
	\item $U_{m} = \bigcap_{i=1}^{m}B_{i}$,
	\vspace{2mm}
	\item $F_{m} = \bigcap_{i=1}^{m}\overline{B_{i}}$,
	\vspace{2mm}
	\item $F_{m}^{bl} =\operatorname{St}_{\mathcal{P}_{m}}(F_{m})
	=$ the strict transform of $F_{m}$ at $\mathcal{P}_{m}$ .		
\end{itemize}
For any $l<m$, let
$\pi_{\mathcal{P}_{l}, \mathcal{P}_{m}}$ be the same 
as in Definition \ref{blowing-up at bounded parabolic points}.
The map $\pi_{\mathcal{P}_{l}, \mathcal{P}_{m}}$ restricts to 
a map from $F_{m}^{bl}$ to $F_{l}^{bl}.$ These maps form a projective system.
Since $\partial A_{X}$ is a subset of $F_{m}$ for any $m\in \mathbb{N},$
there exists the natural map $\phi_{m}:\partial A_{X}^{bl} \rightarrow F_{m}^{bl}$.
Then we have the natural map $\phi : \partial A_{X}^{bl} \rightarrow \varprojlim F_{m}^{bl}$. 
The map $\phi$ is a homeomorphism.
To show this,
we consider the following:
$$
\psi_{m}:
\varprojlim F_{i}^{bl}
= \varprojlim \operatorname{St}_{\mathcal{P}_{i}}(F_{i}) \rightarrow
\cap_{i \in \mathbb{N}} \operatorname{St}_{\mathcal{P}_{m}}(F_{i}) =
\operatorname{St}_{\mathcal{P}_{m}}(\partial A_{X})
.$$
Since $\partial A_{X}^{bl}=\varprojlim \operatorname{St}_{\mathcal{P}_{m}}(\partial A_{X})$,
the maps $\psi_{m}$ induce 
the natural map $\psi : \varprojlim F_{m}^{bl}\rightarrow \partial A_{X}^{bl}$,
which is the inverse of $\phi.$

For each $m\in \mathbb{N}$,
$F_{m}^{bl}$ is a topological manifold with its interior $U_{m},$
and then $F_{m}^{bl}$ and $U_{m}$ are homotopy equivalent.
Hence we have 
\begin{equation}\label{the residual open sets}
H^{*}(\partial A_{X}^{bl}, \mathbb{Z})
\cong \lim_{m \to \infty} H^{*}(F_{m}^{bl},\mathbb{Z})
\cong \lim_{m \to \infty} H^{*}(U_{m},\mathbb{Z})
.
\end{equation}

For each $m \in \mathbb{N}$ and a sufficiently small $\epsilon>0$,
we define $V_{m+1}$ by 
the $\epsilon$-neighborhood of $\partial B^{n} \backslash B_{m+1}$ in $U_{m}.$
Then we can see as follows:
\begin{itemize}
	\item $V_{m+1}$ is contractible,
	\item $U_{m+1}\cup V_{m+1} = U_{m},$
	\item $U_{m+1}\cap V_{m+1}$ is homotopy equivalent to the $(n-2)$-sphere with finite $k$ punctures,
\end{itemize}
where $k$ is the number of the set 
$\{p: p {\text{ is a tangent point of }}
B_{i} {\text{ and }} B_{m+1} {\text{  for some  }}  i\leq m\}.$
By Mayer-Vietoris exact sequence 
and the $(n-2)$-sphere with $k$ punctures is
homotopy equivalent to the wedge sum of $(k-1)$-copies of $S^{n-3}$,
we have the following exact sequence:
$$
\cdots \rightarrow H^{*}(U_{m}) 
\rightarrow H^{*}(U_{m+1})\oplus H^{*}(V_{m+1}) 
\rightarrow H^{*}(\vee_{i=1}^{k-1} S^{n-3})
\rightarrow H^{*+1}(U_{m}) 
\rightarrow \cdots.$$
Therefore, we can inductively conclude that 
$$ H^{*}(U_{m}) = 0 \hspace{3mm}( *\geq n-2).$$
In particular, by Theorem \ref{Main result} and the equation \ref{the residual open sets},
we have 
\begin{align*}
 \operatorname{vcd}\operatorname{Aut}(X) 
 &= \max\{ m : H_{c}^{m}(A_{X}, \mathbb{Z})\neq 0\} \\
 &= \max\{ m : H^{m}(\partial A_{X}^{bl}, \mathbb{Z})\neq 0\}+1 \\
 &\leq n-2 =\rho(X)-3,
\end{align*}
where $H^{*}_{c}$ is the cohomology with compact support .
Lemma \ref{the rank of MW} implies that 
there exists an elliptic fibration $f$ such that 
$\operatorname{rk}\operatorname{MW}(f)= \rho(X)-3.$
\end{proof}

\begin{ex}\label{K3_the_Apollonial_circle_packing}
The Apollonian circle packing is generated as follows:
we begin with four mutually tangent circles.
There is four curvlinear triangles.
We can put the inscribed circle for each curvlinear triangle,
so that the curvlinear triangle separated into new curvlinear triangles.
Inductively, we put inscribed circles for each curvelinear triangle,
and the closure of the union of the inscribed circles is the Apollonian circle packing, 
which is shown in the following Figure.

\begin{figure}[H]
 \centering
 \includegraphics[width=5cm]{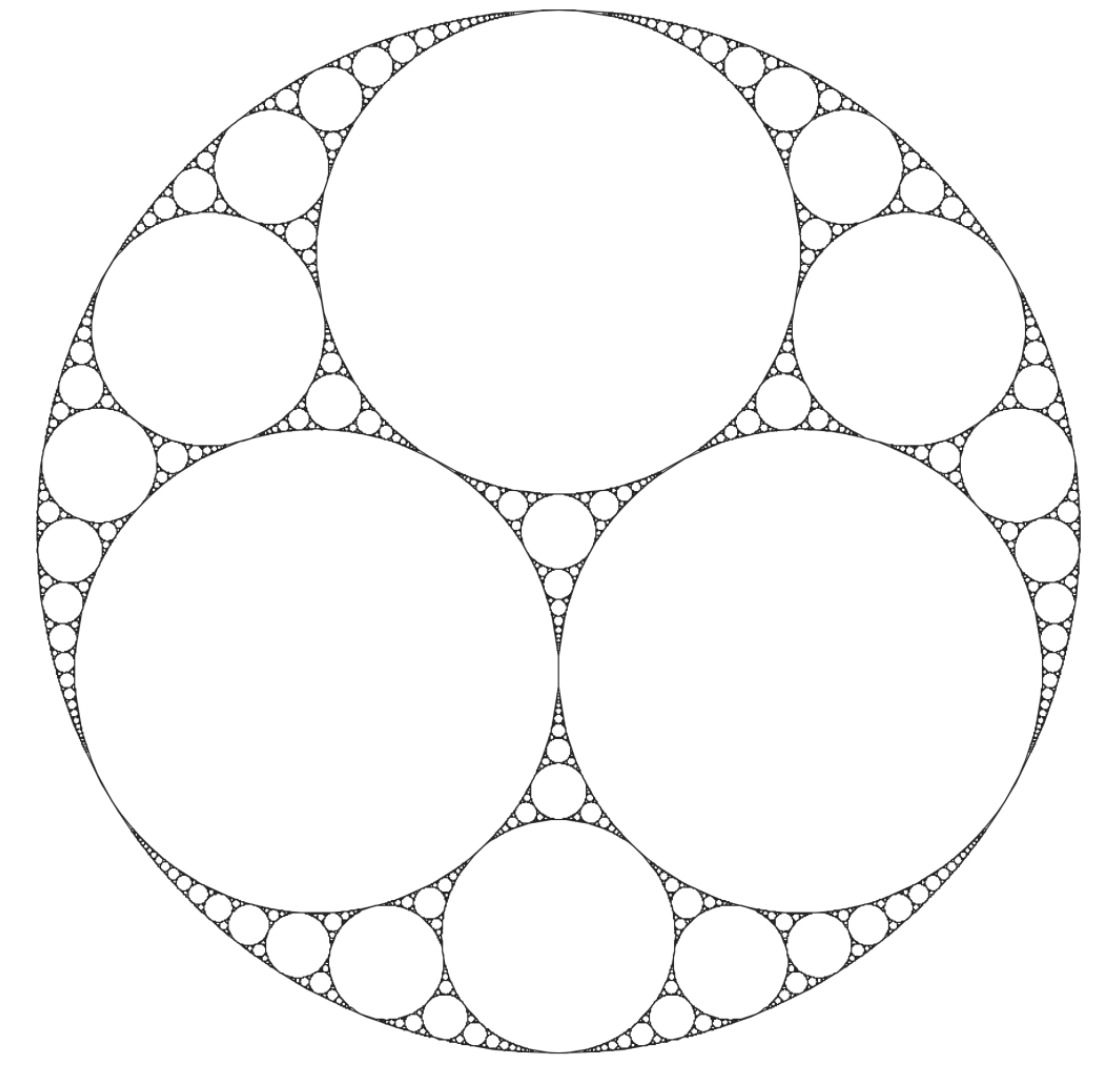}
 \caption{The Apollonian circle packing} \label{The_Apollonian_circle_packing}
\end{figure}

In \cite{Baragar1}, Baragar constructed a K3 surface $Y_{2}$ with ample cone 
the Apollonian circle packing, whose N\'{e}ron-Severi lattice is given by

$$
\begin{pmatrix}
-2 & 2  & 2  & 4  \\
2 &  -2  & 2  & 4 \\
2&  2  & -2 & 0  \\
4 & 4 & 0 &   0   \\
\end{pmatrix}.
$$
By Theorem \ref{Sphere packings}, 
we have $\operatorname{vcd} \operatorname{Aut}(Y_{2}) = 1.$ 
\end{ex}
\begin{ex}\label{K3_the_Apollonial_sphere_packing}
The Apollonian sphere packing is the three dimensional analog of the Apollonian circle packing.
We begin with five mutually tangent spheres,
and in the space between any four of them,
we put the inscribed sphere.
As the same manner in the previous example,
we can inductively put a sphere so that this sphere touch other four spheres.
The closure of the union of the inscribed spheres is the Apollonian sphere packing.
Baragar \cite{Baragar1} also constructed a K3 surface $Y_{3}$
with ample cone the Apollonian sphere packing,
whose N\'{e}ron-Severi lattice is given by
 
$$
\begin{pmatrix}
-2 &  2  & 2  & 2 &  4  \\
2  & -2  & 2  & 2 &  4  \\
2  &  2  & -2 & 2 &  4  \\
2  &  2  & 2  &-2 &  0  \\
4  & 4   &  4 &0  &  0  \\
\end{pmatrix}.
$$
By Theorem \ref{Sphere packings}, 
we have $\operatorname{vcd}\operatorname{Aut}(Y_{3}) = 2.$ 
\end{ex}


\end{document}